\newtheorem{thm}{Theorem}[section]
\newtheorem{cor}[thm]{Corollary}
\newtheorem{lem}[thm]{Lemma}
\newcommand{\R}{{\mathbb{R}}}
\newcommand{\Z}{{\mathbb{Z}}}
\newcommand{\vp}{\varphi}
\newcommand{\La}{\triangle}
\newcommand{\1}{\partial}
\newcommand{\3}{\varepsilon}
\newcommand{\4}{\widetilde}
\def\ni{\noindent}
\begin{document}
\title{Convergence of the Dirichlet solutions\\ 
of the very fast diffusion equation}
\author{Kin Ming Hui and Sunghoon Kim\\ 
Institute of Mathematics, Academia Sinica,\\
Taipei, Taiwan, R.O.C.\\
kmhui@gate.sinica.edu.tw, gauss79@math.sinica.edu.tw}
\date{Dec 15, 2010}
\smallbreak \maketitle
\begin{abstract}
For any $-1<m<0$, $\mu>0$, $0\le u_0\in L^{\infty}(\R)$ 
such that $u_0(x)\le (\mu_0 |m||x|)^{\frac{1}{m}}$ for any $|x|\ge R_0$
and some constants $R_0>1$ and $0<\mu_0\leq \mu$, and $f,\,g \in C([0,\infty))$ such that
$f(t),\, g(t) \geq \mu_0$ on $[0,\infty)$ we prove that as $R\to\infty$ the solution $u^R$ of the 
Dirichlet problem $u_t=(u^m/m)_{xx}$ in $(-R,R)\times (0,\infty)$, 
$u( R,t)=(f(t)|m|R)^{1/m}$, $u(-R,t)=(g(t)|m|R)^{1/m}$ for all $t>0$, $u(x,0)=u_0(x)$ in 
$(-R,R)$, converges uniformly on every compact subsets of $\R\times
(0,T)$ to the solution of the equation $u_t=(u^m/m)_{xx}$ in
$\R\times (0,\infty)$, $u(x,0)=u_0(x)$ in $\R$, which satisfies
$\int_{\R}u(x,t)\,dx=\int_{\R}u_0dx-\int_0^t(f(s)+g(s))\,ds$ for all $0<t<T$ 
where $\int_0^T(f+g)\,ds=\int_{\R}u_0dx$. We also prove that 
the solution constructed is equal
to the solution constructed in \cite{Hu3} using approximation by
solutions of the corresponding Neumann problem in bounded cylindrical
domains. 
\end{abstract}

\vskip 0.2truein

Key words: very fast diffusion equation, Dirichlet problem, Cauchy problem,
convergence, uniqueness\\
\vskip -0.2truein
AMS Mathematics Subject Classification: Primary 35B40 Secondary 35K15, 
35K65

\vskip 0.2truein
\setcounter{equation}{0}
\setcounter{section}{-1}

\section{Introduction}
\setcounter{equation}{0}
\setcounter{thm}{0}

The equation
\begin{equation}\label{eq-base-phi-expression}
u_t=\Delta\phi_m(u)
\end{equation}
where $\phi_m(u)=u^m/m$ for $m\ne 0$ and $\phi_m(u)=\log u$ for $m=0$
arises in many physical models. For example when $m=0$ and the dimension
$n=1$ P.L.~Lions and G.~Toscani \cite{LT} and T.~Kurtz \cite{Ku} have 
shown that \eqref{eq-base-phi-expression} arises as the diffusive limit for finite velocity Boltzmann
kinetic models. When $m=0$ and $n=2$, the equation arises in the Ricci
flow on the complete manifold $\R^2$ \cite{DDD}, \cite{DD}, \cite{DP},
\cite{Hs1}, \cite{W1}, \cite{W2}. When $m=-1$, the equation appears in 
the model of heat conduction in solid hydrogen \cite{R}. 

When $m=1$, the equation
is the well known heat equation. When $0<m<1$, the equation is called the
fast diffusion equation and when $m<0$ the equation is called the very 
fast diffusion equation. We refer the reader to the survey papers of 
Aronson \cite{A} and Peletier \cite{P} and the book \cite{V2} by 
J.L.~Vazquez for various results on \eqref{eq-base-phi-expression}. 

As observed by J.L.~Vazquez \cite{V1} the behaviour of the solution 
of \eqref{eq-base-phi-expression} for $m\le 0$ is very different from the behaviour of solution
of \eqref{eq-base-phi-expression} for $m>0$. For example when $m>0$ and $n=1$ the solution
of \eqref{eq-base-phi-expression} preserves the mass while for $-1<m\le 0$ and $n=1$ there
exists finite mass solutions which vanish in a finite time \cite{RV}.    
In \cite{RV} A.~Rodriguez and J.L.~Vazquez by using semigroup method 
proved that for any $-1<m\le 0$, $0\le u_0\in L^1(\R)$ and any 
non-negative functions $f,g\in L_{loc}^{\infty}(0,\infty)$ there exists 
a smooth unique solution $u$ for 
\begin{equation}\label{eq-Cauchy-problem}
\left\{\begin{aligned}
&u_t=(u^{m-1}u_x)_x\quad\mbox{ in }\R\times (0,T)\\
&u(x,0)=u_0(x)\quad\mbox{ in }\R
\end{aligned}\right.
\end{equation}
which satisfies
\begin{equation}\label{eq-mass-1}
\int_{\R}u(x,t)\,dx=\int_{\R}u_0\,dx-\int_0^t(f+g)\,ds
\quad\forall 0\le t<T
\end{equation}
and
\begin{equation}\label{eq-property-infty-1}
\lim_{x\to\infty}\int_{t_1}^{t_2}u^{m-1}u_x(x,s)\,ds
=-\int_{t_1}^{t_2}f(s)\,ds
\quad\forall 0<t_1<t_2<T
\end{equation}
and
\begin{equation}\label{eq-property-infty-2}
\lim_{x\to -\infty}\int_{t_1}^{t_2}u^{m-1}u_x(x,s)\,ds
=\int_{t_1}^{t_2}g(s)\,ds
\quad\forall 0<t_1<t_2<T
\end{equation}
where 
\begin{equation}\label{eq-time-1}
T=\sup\biggl \{t'>0:\int_{\Bbb{R}}u_0\,dx>\int_0^{t'}(f+g)\,ds\biggr \}.
\end{equation}
Later K.M.~Hui \cite{Hu3} give another proof of this result by proving 
that the solutions of the Neumann problem
\begin{equation*}
\left\{\begin{aligned}
&u_t=\Delta \phi_m (u),u>0,\quad\mbox{ in }(-R,R)\times (0,T)\\
&(\phi_m(u))_x(-R,t)=-f(t)\quad\forall 0<t<T\\
&(\phi_m(u))_x(-R,t)=g(t)\quad\forall 0<t<T\\
&u(x,0)=u_{0}(x)\qquad\qquad\mbox{in }(-R,R)
\end{aligned}\right.
\end{equation*}
converges uniformly on every compact subset of $\R\times (0,T)$
to the solution of \eqref{eq-Cauchy-problem} which satisfies 
\eqref{eq-mass-1}, \eqref{eq-property-infty-1}, 
\eqref{eq-property-infty-2} and \eqref{eq-time-1} 
as $R\to\infty$.

In this paper we will prove that for any $-1<m<0$, $\mu>0$, $0\leq u_0\in L^{\infty}(\R)$ which satisfies \eqref{eq-condition-u_0} as $R\to\infty$
the solution $u^R$ of the Dirichlet problem 
\begin{equation}\label{problem-Dirichlet}
\begin{cases}
\begin{aligned}
&u_t=\left(u^{m}/m\right)_{xx}\qquad\qquad
\mbox{in $(-R,R)\times[0,\infty)$}\\
&u(\pm R,t)=(\mu|m|R)^{\frac{1}{m}}\quad\forall t>0\\
&u(x,0)=u_0(x)\qquad\qquad\mbox{ in }(-R,R) 
\end{aligned}
\end{cases}
\end{equation}
converges uniformly on every compact subsets of $\R\times
(0,T)$ to a solution $u$ of \eqref{eq-Cauchy-problem} which satisfies 
\begin{equation}\label{eq-thm-condition-1}
\int_{\R}u(x,t)\,dx=\int_{\R}u_0\,dx-2\mu t\quad\forall
0<t<T
\end{equation}
and 
\begin{equation}\label{eq-condition-neumann-2}
\frac{u^{m}(x,t)}{m|x|}\to -\mu\quad\mbox{ uniformly in }[a,b]
\quad\mbox{ as }|x|\to\infty
\end{equation}
for any $0<a<b<T$ where 
\begin{equation}\label{eq-mu-time}
T=\frac{1}{2\mu}\int_{\R}u_0\,dx.
\end{equation}
As a consequence by an approximation argument for any $0\le f\in 
L_{loc}^{\infty}([0,\infty))$ we obtain another proof of the existence
of solution of \eqref{eq-Cauchy-problem} which satisfies 
\begin{equation}\label{main-integral-condition-1}
\int_{\R}u(x,t)\,dx=\int_{\R}u_0\,dx-2\int_0^{t}f\,ds \qquad 
\forall 0\leq t<T,
\end{equation}
and
\begin{equation}\label{eq-limit-condition-infty}
\frac{u^{m}(x,t)}{m|x|}\to -f(t)\quad\mbox{ uniformly in }[a,b]
\quad\mbox{ as }|x|\to\infty
\end{equation}
for any $0<a<b<T$ where $T$ is given by 
\begin{equation}\label{eq-def-T-1}
2\int_0^Tf\,ds=\int_{\R} u_0\,dx.
\end{equation}
For any $f,\, g\in C([0,\infty))$ such that $f(t),\, g(t)\ge\mu_0$ on $[0,\infty)$
for some constant $\mu_0>0$ and $0\leq u_0\in L^{\infty}(\R)$ which satisfies \eqref{eq-condition-u_0} we also prove that the solution of 
\begin{equation}\label{problem-general-Dirichlet}
\begin{cases}
\begin{aligned}
&u_t=\left(u^{m}/m\right)_{xx}\qquad\qquad\quad
\mbox{in }(-R,R)\times[0,\infty)\\
&u(R,t)=(f(t)|m|R)^{\frac{1}{m}}\quad\forall t>0\\
&u(-R,t)=(g(t)|m|R)^{\frac{1}{m}}\quad\forall t>0\\
&u(x,0)=u_0(x)\qquad\qquad\quad\mbox{in }(-R,R). 
\end{aligned}
\end{cases}
\end{equation}
converges uniformly on every compact subsets of $\R\times
(0,T)$ to a solution $u$ of \eqref{eq-Cauchy-problem}
which satisfies 
\begin{equation}\label{main-integral-condition-general}
\int_{\R}u(x,t)\,dx=\int_{\R}u_0\,dx-\int_0^{t}(f+g)\,ds \qquad 
\forall 0\leq t<T,
\end{equation}
and 
\begin{equation}\label{eq-limit-condition-infty+general}
\frac{u^{m}(x,t)}{mx}\to -f(t)\quad\mbox{ uniformly in }[a,b]
\quad\mbox{ as }x\to\infty
\end{equation}
and 
\begin{equation}\label{eq-limit-condition-infty-general}
\frac{u^{m}(x,t)}{mx}\to g(t)\quad\mbox{ uniformly in }[a,b]
\quad\mbox{ as }x\to-\infty
\end{equation}
for any $0<a<b<T$ where $T$ is given by 
\begin{equation}\label{eq-def-T-general}
\int_0^T(f+g)\,ds=\int_{\R} u_0\,dx.
\end{equation}
 as $R\to\infty$.

A natural question to ask is that whether the solution $u$ of 
\eqref{eq-Cauchy-problem} which satisfies either \eqref{main-integral-condition-1} or \eqref{main-integral-condition-general}
for some function $f$, $g$ 
constructed by the Dirichlet approximation is equal to the solution
of \eqref{eq-Cauchy-problem} that also satisfies either \eqref{main-integral-condition-1} or \eqref{main-integral-condition-general} constructed in \cite{Hu3} by Neumann 
approximation. In this paper we answer this question in the affirmative 
and prove that the two solutions are equal.

The plan of this paper is as follows. In section one we will 
construct a symmetric self-similar solution of (0.1). In section two 
we will prove some properties of the Green function for the Laplace 
operator $\Delta$ in $(-R,R)$ for any $R>1$. In section three we 
will prove the convergence results of the Dirichlet solutions of 
\eqref{problem-Dirichlet} as $R\to\infty$. In section four we will 
prove the equality of the solutions of \eqref{eq-Cauchy-problem} 
constructed by the Dirichlet approximation and by the Neumann approximation. 
We will also prove the convergence of solutions of 
\eqref{problem-general-Dirichlet} as $R\to\infty$ in section four.   

We start will some definitions. We will assume that $-1<m<0$ for the
rest of the paper. For any $R>0$ and $T>0$ let $I_R=(-R,R)$, 
and $Q_R^T=I_R\times (0,T)$. For any 
$0\le u_0\in L^{\infty}(I_R)$ and $g\in L^{\infty}(\{\pm R\}\times[0,T))$
such that $\inf_{\{\pm R\}\times[0,T)}g>0$, we say that $u$ is a solution of 
the Dirichlet problem 
\begin{equation*}
\begin{cases}
\begin{aligned}
u_t=&(u^m/m)_{xx}\quad\mbox{on $ I_R\times (0,T)$}\\
u(\pm R,t)=&g(\pm R,t)\quad\mbox{ on $(0,\infty)$}\\
u(x,0)=&u_0(x)\qquad\,\,\mbox{ in }I_R
\end{aligned}
\end{cases}
\end{equation*}
if $0<u\in C^2(Q_R^T)\cap L^{\infty}(Q_R)$ satisfies
\begin{equation}\label{main-very-fast-diffusion-1}
u_t=(u^m/m)_{xx} 
\end{equation}
in $Q_R^T$ with
\begin{equation*}
\int_{t_1}^{t_2}\int_{I_R}\left[\left(\frac{u^m}{m}\right)\psi_{xx}
+u\psi_t\right]\,dxds=\int_{t_1}^{t_2}\int_{\partial I_R}
\left(\frac{g^m}{m}\right)\frac{\1\psi}{\1\nu}\,d\sigma ds
+\int_{I_R} u\psi\,dx\Bigg|_{t_1}^{t_2}
\end{equation*}
for all $0<t_1<t_2<T$, $\psi\in C^{\infty}(I_R\times(0,T))$
such that $\psi\equiv 0$  on $\{\pm R\}\times (0,T)$,
where $\1/\1\nu$ is derivative with respect to the unit outward normal 
$\nu$ on $\{\pm R\}\times (0,T)$
and $u(\cdot, t)\to u_0$ in $L^1(-R,R)$ as $t\to 0$.
For any $0\le u_0\in L^1(\R)$ we say that $u$ is a solution of 
\eqref{eq-Cauchy-problem} in
$\R\times (0,T)$ if $u>0$ is a classical solution of 
\eqref{main-very-fast-diffusion-1} in $\R\times 
(0,T)$ and
$$
u(\cdot, t)\to u_0\quad\mbox{ in }L^1(\R)\quad\mbox{ as }t\to 0.
$$
For any set $A$ we let $\chi_A$ be the characteristic function of the
set $A$.

\section{Self-similar solutions of the very fast diffusion equation}
\setcounter{equation}{0}
\setcounter{thm}{0}

In this section we will use a modification of the technique of \cite{Hs3}
to construct self-similar solutions of \eqref{main-very-fast-diffusion-1}. 

\begin{lem}\label{lem-property-f}
For any $R_0>0$ and $\eta>0$, let $f(r)$ be the solution of 
\begin{equation}\label{main-symmetric-1}
\left(\frac{f'}{f^{1-m}}\right)'+\frac{1}{1+m}f-\frac{m}{1+m}rf'=0, \qquad f>0,
\end{equation}
in $(0,R_0)$ which satisfies
\begin{equation}\label{condition-boundary-1}
\begin{cases}
f(0)=\eta\\
f'(0)=0.
\end{cases}
\end{equation}
Then
\begin{equation}\label{property-f-1}
f-mrf'>0 \qquad \mbox{in $[0,R_0)$}
\end{equation}
and 
\begin{equation}\label{property-f-2}
f'<0 \qquad \mbox{in $(0,R_0)$}.
\end{equation}
\end{lem}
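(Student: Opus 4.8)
My plan is to analyze the ODE \eqref{main-symmetric-1}--\eqref{condition-boundary-1} by first rewriting it in a more tractable integral form and then running a continuity/connectedness argument on the set where the two desired inequalities hold. Observe that \eqref{main-symmetric-1} can be multiplied through by a suitable weight to put the principal term in divergence form: since $m/(1+m)\,rf' = \bigl(m/(1+m)\bigr)(rf)' - \bigl(m/(1+m)\bigr)f$, the equation becomes $\bigl(f'/f^{1-m}\bigr)' + \tfrac{1}{1+m}f + \tfrac{1}{1+m}f - \tfrac{m}{1+m}(rf)' = 0$, i.e. $\bigl(f'/f^{1-m}\bigr)' = \tfrac{m}{1+m}(rf)' - \tfrac{2}{1+m}f$. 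Hmm — more cleanly, I will first integrate \eqref{main-symmetric-1} from $0$ to $r$ using $f'(0)=0$ to get a first-order relation of the form $\dfrac{f'(r)}{f(r)^{1-m}} = -\displaystyle\int_0^r\!\Bigl(\tfrac{1}{1+m}f(s) - \tfrac{m}{1+m}sf'(s)\Bigr)ds$, and then integrate the term $sf'(s)$ by parts to express everything in terms of $f$ and $\int f$ alone. This yields $\dfrac{f'(r)}{f(r)^{1-m}} = -\dfrac{1}{1+m}\Bigl(\int_0^r f\,ds\Bigr) + \dfrac{m}{1+m}\Bigl(rf(r) - \int_0^r f\,ds\Bigr) = \dfrac{m}{1+m}\,rf(r) - \int_0^r f\,ds$, using $\tfrac{1}{1+m}+\tfrac{m}{1+m}=1$.

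With this identity in hand, define $F(r) = f(r) - mrf'(r)$, so that \eqref{property-f-1} is $F>0$ on $[0,R_0)$ and $F(0)=\eta>0$. From the first-order relation, $mrf'(r) = f(r)^{1-m}\bigl(\tfrac{m^2}{1+m}rf(r) - m\int_0^r f\bigr)$... rather than push symbolic manipulations, the cleaner route is the following continuity argument. Let $J = \{\,\rho\in(0,R_0): f>0,\ F>0,\ \text{and } f'<0 \text{ on }(0,\rho)\,\}$; near $r=0$, a local analysis of \eqref{main-symmetric-1} (Taylor expanding: $f'(0)=0$ forces $\bigl(f'/f^{1-m}\bigr)'(0) = -\tfrac{1}{1+m}\eta<0$, hence $f'(r)<0$ for small $r>0$, and $F(0)=\eta>0$) shows $J$ is nonempty and open. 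To see $J$ is closed in $(0,R_0)$, suppose $\rho^* = \sup J < R_0$. On $(0,\rho^*)$ we have $f'<0$, so $f$ is decreasing and $0<f(r)\le\eta$, and $-mrf'>0$ (recall $m<0$), so actually $F = f - mrf' > f > 0$ is automatic \emph{as long as} $f'<0$ and $f>0$; thus the only way to exit $J$ at $\rho^*$ is to have $f'(\rho^*)=0$ or $f(\rho^*)=0$. The latter is impossible because from the first-order relation $f'/f^{1-m}$ stays bounded on $[0,\rho^*]$, so $f$ cannot vanish there. For the former, evaluate $\bigl(f'/f^{1-m}\bigr)'$ at $\rho^*$ using \eqref{main-symmetric-1}: if $f'(\rho^*)=0$ then $\bigl(f'/f^{1-m}\bigr)'(\rho^*) = -\tfrac{1}{1+m}f(\rho^*) < 0$, which contradicts the fact that $g:=f'/f^{1-m}$ is negative on $(0,\rho^*)$ and reaches the value $0$ from below at $\rho^*$ (that would require $g'(\rho^*)\ge 0$). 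Hence $\rho^*=R_0$, i.e. $f'<0$ and $F>0$ throughout $[0,R_0)$ as claimed.

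The point I expect to be the genuine obstacle is the behavior at $r=0$: \eqref{main-symmetric-1} has a singular coefficient structure (the term $rf'$ degenerates, and division by $f^{1-m}$ is only benign while $f$ stays positive), so I must be careful that a solution with $f(0)=\eta$, $f'(0)=0$ actually exists, is $C^1$ up to $0$, and that the quantity $g = f'/f^{1-m}$ is differentiable at $0$ with $g(0)=0$; this is what licenses the integration from $0$ and the sign computation of $g'(0)$. I would handle this either by the local existence theory already invoked in the paper's construction (the solution $f$ is \emph{given}), or by recasting as the fixed-point problem $f(r) = \eta + \int_0^r f(s)^{1-m}\bigl(\tfrac{m}{1+m}sf(s) - \int_0^s f\bigr)ds$ and checking it is a contraction on a small interval. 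Everything after that — monotonicity of $f$, boundedness $0<f\le\eta$, and the sign $-mrf'>0$ giving $F>f>0$ for free — is routine once the $r=0$ analysis is pinned down, and the continuity argument closes the interval of validity up to $R_0$.
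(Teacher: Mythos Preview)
Your proposal contains a sign error that breaks the argument at its central step. You claim that when $m<0$, $r>0$, and $f'<0$, one has $-mrf'>0$, and hence $F=f-mrf'>f>0$ ``for free.'' But in fact $mrf' = (\text{neg})(\text{pos})(\text{neg})>0$, so $-mrf'<0$ and $F<f$. Thus the positivity of $F$ is \emph{not} automatic from $f'<0$ and $f>0$; it is genuinely the harder of the two inequalities, and your continuity argument does not establish it. (Incidentally, your integrated identity
\[
\frac{f'(r)}{f(r)^{1-m}}=\frac{m}{1+m}\,rf(r)-\int_0^r f\,ds
\]
is correct, and since $-1<m<0$ both terms on the right are negative whenever $f>0$; this already gives $f'<0$ throughout $(0,R_0)$ directly, without any continuity argument. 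But it does not give $F>0$.)

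The paper's route is different and avoids this difficulty entirely: one checks by direct computation that $h:=f-mrf'$ satisfies the \emph{linear} first-order ODE
\[
h'+\Bigl((m-1)\frac{f'}{f}-\frac{m}{1+m}\,rf^{1-m}\Bigr)h=0,
\]
so that $g(r)h(r)$ is constant for an explicit positive integrating factor $g$. Since $h(0)=\eta>0$, this forces $h>0$ on $[0,R_0)$, which is \eqref{property-f-1}. Only then does one read off $\bigl(f'/f^{1-m}\bigr)'=-h/(1+m)<0$ from the original equation, giving $f'<0$ on $(0,R_0)$. In other words, the paper proves \eqref{property-f-1} first and deduces \eqref{property-f-2}, whereas you attempt the reverse order; your order would be fine if the implication ``$f'<0\Rightarrow F>0$'' held, but it does not.
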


\begin{proof}
Let $h=f-mrf'$. By direct computation,
\begin{equation*}
h'+\left((m-1)\frac{f'}{f}-\frac{m}{1+m}rf^{1-m}\right)h=0
\quad\mbox{ in }(0,R_0).
\end{equation*}
Hence
\begin{equation*}
(g(r)h(r))'=0 \qquad \qquad \mbox{in $(0,R_0)$}
\end{equation*}
where
\begin{equation*}
\begin{aligned}
g(r)&=exp\left(-\frac{m}{1+m}\int_0^r\rho f(\rho)^{1-m}\,d\rho
+(m-1)\int_0^r(\ln f)'\,d\rho\right)\\
&=\left(\frac{f(0)}{f(r)}\right)^{1-m}exp\left(-\frac{m}{1+m}
\int_0^r\rho f(\rho)^{1-m}\,d\rho\right).
\end{aligned}
\end{equation*}
Since $h(0)=\eta>0$ and $g(r)>0$ in $(0,R_0)$, \eqref{property-f-1} 
follows. In addition, by \eqref{main-symmetric-1}, 
\eqref{condition-boundary-1} and \eqref{property-f-1},
\begin{equation*}
\left(\frac{f'}{f^{1-m}}\right)'=-\frac{1}{1+m}h<0\quad 
\mbox{in $(0,R_0)$}.
\end{equation*}
Hence
\begin{equation*}
\frac{f'(r)}{f^{1-m}(r)}<0 \qquad \qquad \mbox{in $(0,R_0)$}
\end{equation*}
and the lemma follows.
\end{proof}

\begin{lem}\label{lem-existence-symmetric}
For any $\eta>0$ there exists a unique solution $f$ of 
\eqref{main-symmetric-1} in $(0,\infty)$ which satisfies 
\eqref{condition-boundary-1}.
\end{lem}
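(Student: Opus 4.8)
The plan is to promote the local solution furnished by Lemma 1.1 (valid on any interval $(0,R_0)$ where it stays positive) to a global one by showing that the solution cannot blow up or vanish in finite $r$. Local existence and uniqueness near $r=0$ is a standard ODE fact once one observes that, writing the equation as a first-order system in $(f,p)$ with $p=f'/f^{1-m}$, the right-hand side is smooth in a neighbourhood of the initial data $f(0)=\eta>0$, $p(0)=0$; so there is a maximal interval of existence $(0,R^*)$ on which $f>0$. It then suffices to rule out $R^*<\infty$, and this is where the monotonicity and positivity estimates of Lemma 1.1 do the work.

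First I would record the two consequences of Lemma 1.1 that hold on all of $[0,R^*)$: $f'<0$ there (so $0<f(r)<\eta$, giving an \emph{a priori} upper bound), and $h:=f-mrf'>0$. Combined with the explicit first integral $g(r)h(r)\equiv g(0)h(0)=\eta$ from the proof of Lemma 1.1, with $g(r)=(\eta/f(r))^{1-m}\exp(-\tfrac{m}{1+m}\int_0^r\rho f(\rho)^{1-m}\,d\rho)$, this yields a closed-form expression $h(r)=\eta\,g(r)^{-1}$. From $f<\eta$ and $-1<m<0$ one bounds $\int_0^r\rho f(\rho)^{1-m}\,d\rho$ from above on any finite interval, hence $g$ is bounded above and below by positive constants on $[0,R]$ for each finite $R<R^*$; consequently $h$, and therefore $f'=(f-h)/(mr)$, is bounded on every such interval. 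So neither $f$ nor $f'$ can escape to infinity as $r\uparrow R^*$.

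The remaining danger is $f(r)\to 0$ as $r\uparrow R^*$ with $R^*<\infty$. To exclude this I would again use $h=f-mrf'>0$, i.e. $f'>f/(mr)$ (note $m<0$, so this is a lower bound on the logarithmic derivative), which integrates to a lower bound $f(r)\ge \eta (r_0/r)^{-1/|m|}\cdot(\dots)$ — more carefully, $(\ln f)'\ge 1/(mr)$ gives $f(r)\ge f(r_1)(r/r_1)^{1/m}$ for $r\ge r_1>0$, and since $1/m<0$ this only says $f$ decays no faster than a fixed negative power of $r$; in particular $f$ stays strictly positive on any bounded interval, so it cannot reach $0$ at a finite $R^*$. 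Hence $R^*=\infty$. Uniqueness on $(0,\infty)$ is inherited from the local uniqueness by the usual continuation argument: two global solutions agree on a maximal subinterval which, by local uniqueness applied at its right endpoint, must be all of $(0,\infty)$.

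I expect the main obstacle to be the behaviour near $r=0$: the system has a coordinate singularity there (the term $rf'$ and the division by $r$ in recovering $f'$ from $h$), so to get clean local existence/uniqueness one should either work directly with the integrated form of \eqref{main-symmetric-1} using the Banach fixed point theorem on a small interval $[0,\delta]$, or change variables (e.g. $s=r^2$) to remove the singularity; after that, the continuation estimates above are routine given Lemma 1.1. A secondary point to be careful about is that $f<\eta$ is needed to bound $g$, and this in turn relies on \eqref{property-f-2}, which Lemma 1.1 only asserts on the a priori interval of positivity — so the argument must be run as a bootstrap on the maximal interval $(0,R^*)$ rather than assuming global positivity from the start.
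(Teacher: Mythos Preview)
Your proposal is correct and follows the same continuation scheme as the paper (local existence, then a priori bounds on the maximal interval ruling out $f\to\infty$, $f\to 0$, and $|f'|\to\infty$), but the individual estimates are obtained differently. For the bound on $f'$ the paper integrates \eqref{main-symmetric-1} once to get
\[
\frac{f'}{f^{1-m}}=\frac{m}{1+m}\,rf(r)-\int_0^r f(\rho)\,d\rho,
\]
from which $|f'|\le C\,r f(0)^{2-m}$ follows using only $0<f\le f(0)$; you instead go through the first integral $g(r)h(r)\equiv\eta$ from the proof of Lemma~\ref{lem-property-f}. That works, but be aware that you only need the trivial lower bound $g\ge 1$ (which follows from $f\le\eta$ and $-m/(1+m)>0$) to conclude $h\le\eta$ and hence $|f'|=|f-h|/(|m|r)\le\eta/(|m|r)$; your claimed \emph{upper} bound on $g$ would require a lower bound on $f$ that you have not yet established at that stage, so that parenthetical should be dropped --- fortunately it is never used. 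For the lower bound on $f$ the paper multiplies the once-integrated equation by $f^{-m}$ and integrates again to bound $|\ln(f/\eta)|$; your route via $h>0\Rightarrow(\ln f)'>1/(mr)\Rightarrow f(r)\ge f(r_1)(r/r_1)^{1/m}$ is shorter and equally effective. You are also right to flag local existence at $r=0$ as the one point needing care; the paper simply invokes ``standard ODE theory'' without further comment.
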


\begin{proof}
Uniqueness of the solution of \eqref{main-symmetric-1}, 
\eqref{condition-boundary-1}, in $(0,\infty)$ follows by standard 
O.D.E. theory. So we only need to prove the existence of solution of 
\eqref{main-symmetric-1}, \eqref{condition-boundary-1}, in $(0,\infty)$. 
Local existence of solution of \eqref{main-symmetric-1}, 
\eqref{condition-boundary-1}, in a small interval around the origin 
also follows from standard O.D.E. theory.\\
\indent Let $(0,R_0)$ be the maximal interval of existence for the 
solution $f$ of \eqref{main-symmetric-1}, \eqref{condition-boundary-1}. 
Suppose $R_0<\infty$. Then there exists an increasing sequence 
$\{r_i\}_{i=1}^{\infty}$, $r_i\to R_o$ as $i \to \infty$, such that either
\begin{equation*}
|f'(r_i)|\to \infty \quad \mbox{as} \,\,i \to \infty
\end{equation*}
or
\begin{equation*}
f(r_i) \to 0 \quad \mbox{as}\,\,i \to \infty
\end{equation*}
or
\begin{equation*}
f(r_i) \to \infty \quad \mbox{as} \,\, i \to \infty. 
\end{equation*} 
By Lemma \ref{lem-property-f} \eqref{property-f-2} holds. Hence
\begin{equation}\label{decreasing-f}
0<f(r)\leq f(0) \qquad \forall 0\leq r< R_0.
\end{equation}
By integrating \eqref{main-symmetric-1}, we have
\begin{equation}\label{main-eq-integral-1}
\begin{aligned}
\frac{f'}{f^{1-m}}&=-\frac{1}{1+m}\int_0^{r}f(\rho)\,d\rho
+\frac{m}{1+m}\int_0^{r}\rho f'(\rho)\,d\rho\\
&=\frac{m}{1+m}rf(r)-\int_0^{r}f(\rho)\,d\rho\qquad\qquad
\forall 0\leq r< R_0.
\end{aligned}
\end{equation}
Then by \eqref{decreasing-f} and  \eqref{main-eq-integral-1},
\begin{equation}\label{f'-upperbound-1}
|f'(r)|\leq\left(\frac{|m|}{1+m}+1\right)rf(0)^{2-m}
\leq\left(-\frac{m}{1+m}+1\right)R_0f(0)^{2-m}\quad\forall r\in[0,R_0).
\end{equation}
Multiplying \eqref{main-eq-integral-1} by $f^{-m}$ and integrating,
\begin{equation*}
\ln\left(\frac{f(r)}{f(0)}\right)= \frac{m}{1+m}\int_0^{r}\rho 
f(\rho)^{1-m}\,d\rho-\int_0^r\left[f(s)^{-m}\int_0^sf(\rho)\,d\rho\right]
ds\quad\forall 0\le r<R_0.
\end{equation*}
Hence
\begin{equation*}
\begin{aligned}
\left|\ln\left(\frac{f(r)}{f(0)}\right)\right|&\leq \frac{1}{2}
\left(-\frac{m}{1+m}+1\right)r^2f(0)^{1-m}\\
&\leq  \frac{1}{2}\left(-\frac{m}{1+m}+1\right)R_0^2f(0)^{1-m}:=C_1
\quad (\mbox{say})\quad\forall 0\le r<R_0.
\end{aligned}
\end{equation*}
Thus
\begin{equation}\label{f-lowerbound-1}
  f(r)\geq f(0)e^{-C_1} \qquad \forall r\in [0,R_0).
\end{equation}
By \eqref{decreasing-f}, \eqref{f'-upperbound-1} and \eqref{f-lowerbound-1}, 
a contradiction arises. Hence no such sequence $\{r_i\}_{i=1}^{\infty}$ 
exists. Therefore $R_0=\infty$ and there exists a unique solution $f$ of 
\eqref{main-symmetric-1}, \eqref{condition-boundary-1}, in $(0,\infty)$.
\end{proof}

\begin{lem}\label{lem-r-f-bound-1}
Let $\eta>0$ and $f$ be the solution of \eqref{main-symmetric-1},
\eqref{condition-boundary-1}, in $(0,\infty)$. Then
\begin{equation}\label{bound-r-f-1}
0<r^{\frac{2}{1-m}}f(r)<\left(\frac{2(1+m)}{1-m}\right)^{\frac{1}{1-m}}
\quad\forall r>0.
\end{equation}
\end{lem}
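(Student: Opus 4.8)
The plan is to recast the claimed inequality and prove it by a first‑touching (continuity) argument. Since $1-m>0$, raising to the power $1-m$ shows that
$$0<r^{\frac{2}{1-m}}f(r)<\left(\frac{2(1+m)}{1-m}\right)^{\frac{1}{1-m}}$$
is equivalent to
$$0<G(r):=r^2f(r)^{1-m}<(1+m)\beta,\qquad \beta:=\frac{2}{1-m},$$
for all $r>0$; the left inequality is trivial. Writing $v(r):=rf'(r)/f(r)$, a direct differentiation gives the identity
$$G'(r)=rf(r)^{1-m}\bigl(2+(1-m)v(r)\bigr).$$
By Lemma \ref{lem-existence-symmetric} the maximal existence radius is $R_0=\infty$, so Lemma \ref{lem-property-f} gives $f'<0$ on $(0,\infty)$; hence $v<0$ there, $f$ is strictly decreasing, and $v(r)\to0$, $G(r)\to0$ as $r\to0^+$.

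The heart of the matter is an a priori inequality relating $G$ and $v$. Since \eqref{main-eq-integral-1} holds for every $r\ge0$ (again by Lemma \ref{lem-existence-symmetric}), I would write $f'/f^{1-m}=f^{m-1}f'=vf^{m}/r$ in it and multiply by $r/f^m$ to obtain
$$v(r)=\frac{m}{1+m}\,G(r)-\frac{r}{f(r)^m}\int_0^rf\,d\rho .$$
Because $f$ is strictly decreasing, $\int_0^rf\,d\rho>rf(r)$ for $r>0$, so the subtracted term exceeds $r\cdot rf(r)\cdot f(r)^{-m}=G(r)$; therefore $v<\frac{m}{1+m}G-G=-\frac{G}{1+m}$, i.e.
$$G(r)<(1+m)\,|v(r)|\qquad\text{for all }r>0.$$
It is worth noting that using only $\int_0^rf>0$ here yields merely $G<\frac{1+m}{|m|}|v|$, which is off by the factor $1/|m|>1$ and is not sufficient; lower‑bounding $\int_0^rf$ by $rf(r)$ via the monotonicity of $f$ is exactly what sharpens the estimate enough, and this is the main obstacle to get right.

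With these two facts the conclusion follows by a routine maximum‑type argument. Suppose $G(r_0)\ge(1+m)\beta$ for some $r_0>0$. Since $G$ is continuous with $G\to0$ as $r\to0^+$, the closed set $A:=\{r>0:G(r)\ge(1+m)\beta\}$ is nonempty, and $r^*:=\inf A>0$ satisfies $G(r^*)=(1+m)\beta$ while $G<(1+m)\beta$ on $(0,r^*)$. By the displayed inequality, $(1+m)\beta=G(r^*)<(1+m)|v(r^*)|$, so $v(r^*)<-\beta$ (recall $v<0$), hence $2+(1-m)v(r^*)<0$ and therefore $G'(r^*)<0$ by the formula for $G'$. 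But $G'(r^*)<0$ together with $G(r^*)=(1+m)\beta$ forces $G>(1+m)\beta$ immediately to the left of $r^*$, contradicting $G<(1+m)\beta$ on $(0,r^*)$. Hence $G(r)<(1+m)\beta$ for all $r>0$, which is the claim.

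A slightly different route, essentially equivalent, is to verify from the identity $v'=(1-mv)\bigl(\tfrac{v}{r}-\tfrac{rf^{1-m}}{1+m}\bigr)$ that $v'<0$ on $(0,\infty)$, so $G$ is increasing precisely where $v>-\beta$ and decreasing where $v<-\beta$: if $v\ge-\beta$ throughout then $|v|\le\beta$ and $G<(1+m)\beta$ by the key inequality, while otherwise $G$ is maximised at the unique $r_1$ with $v(r_1)=-\beta$, where again $G(r_1)<(1+m)|v(r_1)|=(1+m)\beta$. Either way the only step requiring insight is the passage from \eqref{main-eq-integral-1} to $G<(1+m)|v|$.
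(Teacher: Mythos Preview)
Your proof is correct, but it takes a more roundabout path than the paper's. Both arguments hinge on exactly the same observation: since $f$ is strictly decreasing, $\int_0^r f\,d\rho>rf(r)$, which combined with \eqref{main-eq-integral-1} yields (in your notation) $G<(1+m)|v|$. Unwinding the definitions, this inequality is precisely
\[
-\frac{f'}{f^{2-m}}>\frac{r}{1+m},
\]
and the paper simply integrates this from $0$ to $r$ (noting $-f'/f^{2-m}=\bigl(f^{m-1}\bigr)'/(1-m)$) to get $f(r)^{m-1}>\frac{1-m}{2(1+m)}r^2+f(0)^{m-1}$, from which the bound is immediate. So the ``key insight'' you identify is indeed the whole proof; your first-touching machinery with $G$, $v$, and $G'$ is an unnecessary detour around a one-line integration. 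Your alternative route via $v'<0$ is also correct but even more elaborate. The paper's approach is shorter and more transparent; yours has the mild virtue of packaging the argument as a maximum principle, which could generalise to settings where the inequality is not directly integrable, but here that generality buys nothing.
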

\begin{proof}
We will use an argument similar to the proof of Lemma 2.1 in \cite{Hs3} to 
prove \eqref{bound-r-f-1}. By Lemma \ref{lem-property-f}, $f'<0$ in 
$(0,\infty)$. Hence by \eqref{main-eq-integral-1},
\begin{equation*}
-\frac{f'}{f^{2-m}}\geq -\frac{m}{1+m}r+r= \frac{1}{1+m}r\quad\forall r>0.
\end{equation*}
Integrating over $(0,r)$ and simplifying,
\begin{equation*}
f(r)\leq \left(\frac{1-m}{2(1+m)}r^2+f^{m-1}(0)\right)^{-\frac{1}{1-m}}
<\left(\frac{2(1+m)}{1-m}\right)^{\frac{1}{1-m}}r^{-\frac{2}{1-m}}
\quad\forall r>0
\end{equation*}
and \eqref{bound-r-f-1} follows.
\end{proof}

\begin{lem}\label{lem-mass-2mu(T-t)-0}
For any $\mu>0$, there exists a constant $\eta>0$ and a solution $f$ of 
\eqref{main-symmetric-1}, \eqref{condition-boundary-1}, in $(0,\infty)$ 
that satisfies
\begin{equation}\label{eq-integration-f-mu}
\int_{0}^{\infty}f(r)\,dr=\mu. 
\end{equation}
\end{lem}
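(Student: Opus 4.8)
The plan is to exploit the scaling invariance of equation \eqref{main-symmetric-1} together with a continuity/monotonicity argument in $\eta$. First I would observe that if $f$ solves \eqref{main-symmetric-1} with $f(0)=1$, $f'(0)=0$, then for any $\lambda>0$ the rescaled function $f_\lambda(r):=\lambda^{\frac{2}{1-m}}f(\lambda r)$ again solves \eqref{main-symmetric-1} (one checks that each of the three terms scales by the common factor $\lambda^{\frac{2}{1-m}+2}$), with initial data $f_\lambda(0)=\lambda^{\frac{2}{1-m}}$, $f_\lambda'(0)=0$. Hence the solution with $f(0)=\eta$ is exactly $f_\lambda$ with $\lambda=\eta^{\frac{1-m}{2}}$, and therefore
\[
\int_0^\infty f(r)\,dr \;=\; \lambda^{\frac{2}{1-m}}\int_0^\infty f(\lambda r)\,dr
\;=\; \lambda^{\frac{2}{1-m}-1}\int_0^\infty f_1(s)\,ds
\;=\; \eta^{\frac{1}{2}(1-m)\left(\frac{2}{1-m}-1\right)}\,I_1,
\]
where $f_1$ is the $\eta=1$ solution and $I_1:=\int_0^\infty f_1$. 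The exponent simplifies to $\tfrac{1+m}{2}>0$ since $-1<m<0$, so $\int_0^\infty f = \eta^{(1+m)/2} I_1$ is a strictly increasing continuous bijection of $(0,\infty)$ onto $(0,\infty)$ \emph{provided} $0<I_1<\infty$. Then one simply picks $\eta=(\mu/I_1)^{2/(1+m)}$.

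So the real content is to prove $0<I_1<\infty$, i.e. that the $\eta=1$ solution $f_1$ is integrable on $(0,\infty)$ and has positive integral. Positivity is immediate since $f_1>0$ everywhere by Lemma \ref{lem-existence-symmetric}. For finiteness, I would invoke Lemma \ref{lem-r-f-bound-1}: it gives $f_1(r)\le C\,r^{-2/(1-m)}$ for all $r>0$, and since $-1<m<0$ we have $\frac{2}{1-m}>1$, so $r^{-2/(1-m)}$ is integrable at $+\infty$; combined with the boundedness $f_1(r)\le f_1(0)=1$ near $r=0$ (from \eqref{decreasing-f} in the proof of Lemma \ref{lem-existence-symmetric}), we conclude $I_1<\infty$. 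This is the step I expect to be the crux — everything else is bookkeeping with the scaling — but it is already handed to us by the decay estimate \eqref{bound-r-f-1}.

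An alternative to the explicit scaling computation, should one wish to avoid tracking exponents, is to argue directly that $\eta\mapsto F(\eta):=\int_0^\infty f(r;\eta)\,dr$ is continuous (by continuous dependence of ODE solutions on initial data, uniform on compact $r$-intervals, together with the uniform tail bound $f(r;\eta)\le C(\eta)r^{-2/(1-m)}$ from Lemma \ref{lem-r-f-bound-1} to control the integral near infinity) and strictly monotone in $\eta$ (by an ODE comparison argument: two solutions with $\eta_1<\eta_2$ cannot cross, since at a first crossing point the uniqueness in Lemma \ref{lem-existence-symmetric} would be violated once one also matches the derivative — more carefully, one uses that $f(\cdot;\eta_1)<f(\cdot;\eta_2)$ is propagated), and then to check the limits $F(\eta)\to 0$ as $\eta\to 0^+$ and $F(\eta)\to\infty$ as $\eta\to\infty$, again via the quantitative bounds. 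The scaling argument is cleaner, so I would present that one and relegate this remark to a footnote if at all. The whole proof is short once Lemmas \ref{lem-existence-symmetric} and \ref{lem-r-f-bound-1} are in hand.
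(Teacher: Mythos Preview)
Your proposal is correct and is essentially the same argument as the paper's: the paper too starts from the $\eta=1$ solution $\phi$, uses Lemma~\ref{lem-r-f-bound-1} to get $A_1:=\int_0^\infty\phi<\infty$, defines $f(r)=\eta\,\phi(\eta^{(1-m)/2}r)$ (which is your $f_\lambda$ with $\lambda=\eta^{(1-m)/2}$), and solves $A_1\eta^{(1+m)/2}=\mu$. The only quibble is a notational slip in your displayed chain, where the middle integrand should be $f_1(\lambda r)$ rather than $f(\lambda r)$; the computation itself is right.
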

\begin{proof}
By Lemma \ref{lem-existence-symmetric} there exists a solution 
$\phi(r)$ of 
\begin{equation*}
\left(\frac{\phi'}{\phi^{1-m}}\right)'+\frac{1}{1+m}\phi-\frac{m}{1+m}r
\phi'=0
\end{equation*}
in $(0,\infty)$ which satisfies $\phi(0)=1$ and $\phi'(0)=0$. Moreover by
Lemma \ref{lem-r-f-bound-1},
\begin{equation}\label{eq-mass-phi-1}
\int_0^{\infty}\phi(r)\,dr:=A_1<\infty.
\end{equation}
We now choose the constant $\eta$ such that
\begin{equation}\label{eq-mass-phi-2}
A_1\eta^{\frac{1+m}{2}}=\mu
\end{equation}  
and let
\begin{equation}\label{eq-rescale-1}
f(r)=\eta \phi(\eta^{\frac{1-m}{2}}r).
\end{equation}
Then $f$ satisfies \eqref{main-symmetric-1} and \eqref{condition-boundary-1} 
in $(0,\infty)$. By \eqref{eq-mass-phi-1}, \eqref{eq-mass-phi-2} 
and \eqref{eq-rescale-1},
\begin{equation*}
\int_0^{\infty}f(r)\,dr=A_1\eta^{\frac{1+m}{2}}=\mu.
\end{equation*}
and \eqref{eq-integration-f-mu} follows.
\end{proof}

\begin{cor}\label{lem-mass-2mu(T-t)-1}
For any $\mu>0$ and $T>0$ there exists an even, smooth, positive 
solution $v(x,t)$ of \eqref{main-very-fast-diffusion-1} in $\R\times(0,T)$ which satisfies
\begin{equation*}
\int_{\R}v(x,t)\,dx=2\mu(T-t) \qquad \forall t\in [0,T).
\end{equation*}
\end{cor}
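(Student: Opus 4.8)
The plan is to obtain $v$ as a self-similar rescaling of the profile $f$ supplied by Lemma~\ref{lem-mass-2mu(T-t)-0}. First I would fix $\eta>0$ and the corresponding solution $f$ of \eqref{main-symmetric-1}, \eqref{condition-boundary-1} on $(0,\infty)$ with $\int_0^\infty f\,dr=\mu$. Since \eqref{main-symmetric-1} is invariant under $r\mapsto -r$ and $f'(0)=0$, the even extension of $f$ to $\R$ is a $C^1$ solution of the same ODE, which is non-degenerate because $f>0$; bootstrapping then gives $f\in C^\infty(\R)$, and I keep writing $f$ for this extension. Using $(f'/f^{1-m})'=(f^m/m)''$, equation \eqref{main-symmetric-1} becomes
\begin{equation*}
\Big(\frac{f^m}{m}\Big)''=-\frac{1}{1+m}\,f+\frac{m}{1+m}\,r f'\qquad\text{in }\R,
\end{equation*}
which is the identity I will match below.

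Next I set
\begin{equation*}
v(x,t)=(T-t)^{\frac{1}{1+m}}\,f\!\big(x\,(T-t)^{-\frac{m}{1+m}}\big),\qquad (x,t)\in\R\times(0,T),
\end{equation*}
which is smooth, even in $x$ (since $f$ is even), and strictly positive because $f>0$ and $T-t>0$ on $(0,T)$. Writing $\xi=x(T-t)^{-\frac{m}{1+m}}$ and differentiating, one computes
\begin{equation*}
v_t=(T-t)^{-\frac{m}{1+m}}\Big(-\tfrac{1}{1+m}f(\xi)+\tfrac{m}{1+m}\,\xi f'(\xi)\Big),\qquad
\Big(\frac{v^m}{m}\Big)_{xx}=(T-t)^{-\frac{m}{1+m}}\Big(\frac{f^m}{m}\Big)''(\xi).
\end{equation*}
By the displayed profile identity for $f$ the two right-hand sides coincide, so $v_t=(v^m/m)_{xx}$ on $\R\times(0,T)$; hence $v$ is an even, smooth, positive solution of \eqref{main-very-fast-diffusion-1}.

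Finally the mass is computed by the substitution $\xi=x(T-t)^{-m/(1+m)}$:
\begin{equation*}
\int_\R v(x,t)\,dx=2(T-t)^{\frac{1}{1+m}}\int_0^\infty f\!\big(x(T-t)^{-\frac{m}{1+m}}\big)\,dx=2(T-t)^{\frac{1}{1+m}+\frac{m}{1+m}}\int_0^\infty f(\xi)\,d\xi=2\mu(T-t)
\end{equation*}
for all $t\in[0,T)$, using $\frac{1}{1+m}+\frac{m}{1+m}=1$ and $\int_0^\infty f\,dr=\mu$; the integral is finite by Lemma~\ref{lem-r-f-bound-1}.

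I do not expect a genuine obstacle here; the two points that need care are the $C^\infty$-regularity of the even extension of $f$ at the origin (which follows from standard ODE regularity, the equation being non-degenerate since $f>0$), and the choice of the self-similar exponents: requiring simultaneously that the scaling be compatible with \eqref{main-very-fast-diffusion-1} and that the total mass be linear in $T-t$ forces $v=(T-t)^{a}f(x(T-t)^{-b})$ with $a+b=1$ and $a-1=am-2b$, i.e.\ $a=\frac{1}{1+m}$, $b=\frac{m}{1+m}$, which is exactly the rescaling turning \eqref{main-symmetric-1} into the profile equation.
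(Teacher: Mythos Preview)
Your proof is correct and follows essentially the same approach as the paper: define $v(x,t)=(T-t)^{1/(1+m)}f\big(|x|(T-t)^{-m/(1+m)}\big)$ from the profile $f$ of Lemma~\ref{lem-mass-2mu(T-t)-0} and check the mass by a change of variables. The paper's proof is terse and simply asserts that $v$ solves \eqref{main-very-fast-diffusion-1}; you supply the explicit verification of the PDE and the smoothness of the even extension, but the construction and argument are the same.
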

\begin{proof}
Let $\eta$ and $f$ be as in Lemma \ref{lem-mass-2mu(T-t)-0} and let
\begin{equation*}
v(x,t)=(T-t)^{\frac{1}{1+m}}f\left(|x|(T-t)^{\frac{-m}{1+m}}\right).
\end{equation*}
The $v$ is an even smooth solution of \eqref{main-very-fast-diffusion-1} in $\R\times(0,T)$ with
\begin{equation*}
\int_{\R}v(x,t)\,dx=2(T-t)\int_0^{\infty}f(r)\,dr=2\mu(T-t)
\end{equation*}
and the corollary follows.
\end{proof}

\begin{lem}\label{lem-limit-w-general}
Let $\mu>0$ and let $f$ be as in Lemma \ref{lem-mass-2mu(T-t)-0}. Then
$r^{-\frac{1}{m}}f(r)$ increases to $(\mu|m|)^{\frac{1}{m}}$ as 
$r\to\infty$. Moreover there exist constants $a>0$ and $r_0>a/(\mu |m|)$ 
such that
\begin{equation}\label{eq-f-asymptotic-infty}
(\mu |m|r+a)^{\frac{1}{m}}\le f(r)\le (\mu |m|r)^{\frac{1}{m}}
\quad\forall r\ge r_0.
\end{equation}
\end{lem}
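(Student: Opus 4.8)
The plan is to extract everything from the first integral \eqref{main-eq-integral-1} of \eqref{main-symmetric-1}. Since $f^{m-1}f'=\tfrac1m(f^m)'$, that identity can be rewritten as
\begin{equation*}
(f^m)'(r)=\frac{m^2}{1+m}\,rf(r)+|m|\int_0^rf(\rho)\,d\rho ,\qquad r>0 .
\end{equation*}
First I would show that $w(r):=r^{-1/m}f(r)$ is strictly increasing on $(0,\infty)$: a direct differentiation gives $w'(r)=r^{-1/m-1}\bigl(-\tfrac1mf+rf'\bigr)=\tfrac1{|m|}\,r^{-1/m-1}\bigl(f-mrf'\bigr)$, which is positive by \eqref{property-f-1} of Lemma~\ref{lem-property-f}.

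Next I would identify $\lim_{r\to\infty}w(r)$. From Lemma~\ref{lem-r-f-bound-1} and $-1<m<0$ one has $\tfrac{2}{1-m}>1$, so $f(r)\to0$ and $rf(r)\to0$ as $r\to\infty$; since $\int_0^\infty f=\mu$ by the choice of $f$ in Lemma~\ref{lem-mass-2mu(T-t)-0}, the displayed identity forces $(f^m)'(r)\to|m|\mu$, and hence also $f^m(r)/r\to\mu|m|$. Therefore $w(r)=\bigl(f^m(r)/r\bigr)^{1/m}\to(\mu|m|)^{1/m}$, so together with the previous step $r^{-1/m}f(r)$ increases to $(\mu|m|)^{1/m}$. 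In particular $w(r)<(\mu|m|)^{1/m}$ for every $r>0$, and raising to the power $m<0$ this gives $f^m(r)>\mu|m|r$, equivalently $f(r)<(\mu|m|r)^{1/m}$, for all $r>0$ — in particular the upper bound in \eqref{eq-f-asymptotic-infty}.

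For the lower bound in \eqref{eq-f-asymptotic-infty} I would put $\Phi(r):=f^m(r)-\mu|m|r$, so that $\Phi>0$ on $(0,\infty)$ by what was just shown. Differentiating the displayed identity once more and using $\tfrac{m^2}{1+m}+|m|=\tfrac{|m|}{1+m}$ and $m^2=|m|^2$, one finds $\Phi''(r)=(f^m)''(r)=\tfrac{|m|}{1+m}\bigl(f-mrf'\bigr)>0$, again by \eqref{property-f-1}. Thus $\Phi$ is strictly convex, $\Phi'$ is strictly increasing, and since $\Phi'(r)=(f^m)'(r)-\mu|m|\to0$ as $r\to\infty$ it follows that $\Phi'<0$ on $(0,\infty)$, i.e.\ $\Phi$ is strictly decreasing. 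Using $f^m(r)/r\to\mu|m|$ I would then pick $r_0$ so large that $f^m(r_0)<2\mu|m|r_0$ and set $a:=\Phi(r_0)=f^m(r_0)-\mu|m|r_0$; this gives $0<a<\mu|m|r_0$, so $r_0>a/(\mu|m|)$. For $r\ge r_0$, monotonicity of $\Phi$ yields $0<\Phi(r)\le a$, i.e.\ $\mu|m|r<f^m(r)\le\mu|m|r+a$, and raising to the power $\tfrac1m<0$ gives $(\mu|m|r+a)^{1/m}\le f(r)\le(\mu|m|r)^{1/m}$, which is \eqref{eq-f-asymptotic-infty}.

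Each individual computation is elementary; the only real decision is which quantities to differentiate. I expect the crux to be the observation that $\Phi=f^m-\mu|m|r$ is convex with $\Phi'\to0$: this is what forces $\Phi$ to be simultaneously positive and decreasing, and hence delivers both inequalities in \eqref{eq-f-asymptotic-infty} at once. It also sidesteps the integral $\int_{r_0}^\infty rf(r)\,dr$ — which diverges precisely when $m\le-\tfrac12$ — that a cruder integration of the estimate for $(f^m)'$ against the trivial bound $\int_0^s f\le\mu$ would leave behind.
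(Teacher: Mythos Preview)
Your argument is correct, and it follows a genuinely different route from the paper's. Both proofs establish $w'>0$ from \eqref{property-f-1} in the same way, but diverge after that. The paper derives an ODE for $w$, introduces an integrating factor $g(r)$ to obtain the conserved quantity $r^2g(r)\,w'/w^{1-m}=-\eta^m/m$, bounds $g$ below to get $w'/w^{1-m}=O(r^{-2})$, and from this deduces both $\lim w$ (via $r\,w'/w^{1-m}\to0$ combined with $f'/f^{1-m}\to-\mu$) and the two-sided bound (by integrating $|(w^m)'|\le C r^{-2}$ from $r$ to $\infty$). You instead read $\lim w$ directly from $(f^m)'\to|m|\mu$ and l'H\^opital, and then obtain the lower bound through the convexity of $\Phi=f^m-\mu|m|r$ together with $\Phi'\to0$, which forces $\Phi$ to be decreasing. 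Your path is shorter and avoids the integrating-factor machinery entirely; the paper's $O(r^{-2})$ decay of $(w^m)'$ is marginally sharper quantitative information, but it is not needed for the statement, and both routes arrive at the same final estimate $0<f^m(r)-\mu|m|r\le a$. Your closing remark about $\int rf\,dr$ diverging for $m\le-\tfrac12$ is accurate and nicely explains why the convexity observation is the right tool here rather than a naive integration of the error term.
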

\begin{proof}
Let $w(r)=r^{-\frac{1}{m}}f(r)$. By direct computation $w(r)$ satisfies
\begin{equation}\label{eq-w-asymptotic-1}
\left(\frac{w'}{w^{1-m}}\right)'+\frac{2}{r}\left(\frac{w'}{w^{1-m}}\right)
-\frac{m}{1+m}r^{\frac{1}{m}}w'=0 \qquad \mbox{in $(0,\infty)$}.
\end{equation}
By Lemma \ref{lem-property-f} \eqref{property-f-1} holds in $(0,\infty)$. 
Then
\begin{equation}\label{eq-w'-positive-1}
w'(r)=-\frac{1}{m}r^{-\left(\frac{1}{m}+1\right)}(f(r)-mrf'(r))>0 \qquad 
\qquad \forall r>0.
\end{equation}
Let
\begin{equation*}
g(r)=\exp\left(-\frac{m}{1+m}\int_0^r\rho^{\frac{1}{m}}w^{1-m}(\rho)
\,d\rho\right) \qquad \qquad \forall r>0.
\end{equation*}
Then $g'(r)=-\frac{m}{1+m}r^{\frac{1}{m}}w^{1-m}(r)g(r)$. Multiplying 
\eqref{eq-w-asymptotic-1} by $r^2g(r)$ and integrating over $(0,r)$,
\begin{equation}\label{eq-constant-1}
r^2g(r)\frac{w'(r)}{w^{1-m}(r)}=-\frac{\eta^m}{m}.
\end{equation}
By \eqref{eq-w'-positive-1}, 
\begin{equation}\label{eq-g-lowerbound}
g(r)\geq\exp\left(-\frac{m}{1+m}w^{1-m}(1)\int_1^2\rho^{\frac{1}{m}}
\,d\rho\right):=c \qquad \forall r\geq 2
\end{equation}
for some constant $c>0$. Hence by \eqref{eq-w'-positive-1},
\eqref{eq-constant-1}, and \eqref{eq-g-lowerbound},
\begin{equation}\label{eq-r-times-w-1}
0<\frac{w'(r)}{w^{1-m}(r)}\leq \frac{C}{r^2} \qquad \forall r\geq 2
\end{equation}
for some constant $C>0$. Thus
\begin{equation}\label{eq-limit-lower-order-1}
r\frac{w'(r)}{w^{1-m}(r)}\to 0 \qquad \mbox{as $r\to\infty$}.
\end{equation}
By \eqref{main-eq-integral-1}, \eqref{bound-r-f-1} and 
\eqref{eq-integration-f-mu},
\begin{equation}\label{eq-H'/H^1-m-1}
\lim_{r\to\infty}\frac{f'(r)}{f^{1-m}(r)}\to -\mu.
\end{equation}
Hence by \eqref{eq-limit-lower-order-1} and \eqref{eq-H'/H^1-m-1},
\begin{equation*}
0=\lim_{r\to\infty}\left[r\frac{w'(r)}{w^{1-m}(r)}\right]=\lim_{r\to\infty}
\left[-\frac{w^m}{m}+\frac{f'}{f^{1-m}}\right]
=\frac{\lim_{r\to\infty}w^m(r)}{|m|}-\mu.
\end{equation*}
Thus $\lim_{r\to\infty}w(r)=\left(\mu|m|\right)^{\frac{1}{m}}$.
By \eqref{eq-r-times-w-1} there exists a constant $a>0$ such that
\begin{equation*}
|w^m(r)-\mu|m||\le\left|\int_r^{\infty}(w^m)'(\rho)\,d\rho\right|
\le\int_r^{\infty}|m|w^{m-1}(\rho)w'(\rho)\,d\rho
\le a\int_r^{\infty}\rho^{-2}\,d\rho=a/r
\end{equation*}
for any $r>2$ and \eqref{eq-f-asymptotic-infty} follows.

\end{proof}

\begin{cor}\label{cor-limit-condition-v-1}
Let $\mu>0$, $T>0$, and let $v(x,t)$ be as in 
Corollary \ref{lem-mass-2mu(T-t)-1}. Then $|x|^{-\frac{1}{m}}v(x,t)$
increases to $(\mu |m|)^{\frac{1}{m}}$ as $|x|\to\infty$
uniformly on $0\leq t\leq T-\delta$ for any $\delta>0$. Moreover there
exist constants $a>0$ and $r_0>a/(\mu |m|)$ such that
\begin{equation*}
(\mu |m||x|+a(T-t)^{\frac{m}{1+m}})^{\frac{1}{m}}
\le v(x,t)
\le (\mu |m||x|)^{\frac{1}{m}}
\end{equation*}
holds for any $|x|\ge r_0(T-t)^{\frac{m}{1+m}}$ and $0<t<T$.
\end{cor}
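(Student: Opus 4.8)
\emph{Proof proposal.} The plan is to reduce everything to Lemma~\ref{lem-limit-w-general} via the self-similar change of variables used to define $v$. Write $r=|x|(T-t)^{-m/(1+m)}$, so that $v(x,t)=(T-t)^{1/(1+m)}f(r)$. Since $|x|^{-1/m}=r^{-1/m}(T-t)^{-1/(1+m)}$ (one uses $(-m/(1+m))\cdot(-1/m)=1/(1+m)$), this yields the key identity
\[
|x|^{-\frac1m}v(x,t)=r^{-\frac1m}f(r).
\]
Because $-1<m<0$ forces $1+m>0$ and $-m/(1+m)>0$, for each fixed $t\in[0,T)$ the map $|x|\mapsto r$ is strictly increasing and $r\to\infty$ as $|x|\to\infty$. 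Combining this with the assertion of Lemma~\ref{lem-limit-w-general} that $r^{-1/m}f(r)$ increases to $(\mu|m|)^{1/m}$ as $r\to\infty$ then gives that $|x|^{-1/m}v(x,t)$ increases to $(\mu|m|)^{1/m}$ as $|x|\to\infty$.

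For the uniformity on $0\le t\le T-\delta$, I would note that on this range $(T-t)^{-m/(1+m)}\ge\delta^{-m/(1+m)}$, hence $r\ge|x|\,\delta^{-m/(1+m)}$, so $r\to\infty$ uniformly in $t\in[0,T-\delta]$ as $|x|\to\infty$. The uniform convergence of $|x|^{-1/m}v(x,t)$ to $(\mu|m|)^{1/m}$ then follows from the (monotone, hence locally uniform) convergence of $r^{-1/m}f(r)$.

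For the two-sided bound I would insert the change of variables into the inequality $(\mu|m|r+a)^{1/m}\le f(r)\le(\mu|m|r)^{1/m}$ of Lemma~\ref{lem-limit-w-general}, valid for $r\ge r_0$. Using $r^{1/m}=|x|^{1/m}(T-t)^{-1/(1+m)}$ gives $(T-t)^{1/(1+m)}(\mu|m|r)^{1/m}=(\mu|m||x|)^{1/m}$, which is the upper bound; factoring $\mu|m|r+a=(T-t)^{-m/(1+m)}\bigl(\mu|m||x|+a(T-t)^{m/(1+m)}\bigr)$ gives $(T-t)^{1/(1+m)}(\mu|m|r+a)^{1/m}=\bigl(\mu|m||x|+a(T-t)^{m/(1+m)}\bigr)^{1/m}$, which is the lower bound, and the requirement $r\ge r_0$ becomes precisely $|x|\ge r_0(T-t)^{m/(1+m)}$. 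Since the whole argument is a pure substitution, I expect no real obstacle beyond exponent bookkeeping; the only points that need a moment's care are that monotonicity in $r$ transfers to monotonicity in $|x|$ (which uses $-m/(1+m)>0$) and that the convergence is genuinely uniform in $t$ on $[0,T-\delta]$, both consequences of the sign constraints $-1<m<0$.
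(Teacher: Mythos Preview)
Your argument is correct and is precisely the intended one: the paper states this result as a corollary without proof because it follows from Lemma~\ref{lem-limit-w-general} by the self-similar substitution $r=|x|(T-t)^{-m/(1+m)}$, and your computation carries this out in full (including the identity $|x|^{-1/m}v(x,t)=r^{-1/m}f(r)$, the transfer of monotonicity via $-m/(1+m)>0$, and the exponent bookkeeping for the two-sided bound). There is nothing to add.
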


\section{Properties of the Green function in $(-R,R)$}
\label{Properties of the Green function in $(-R,R)$}
\setcounter{equation}{0}
\setcounter{thm}{0}

In this section we will prove some properties of the Green function for 
the Laplace operator on $I_R=(-R,R)$. For any $R>0$ and $f\in L^1(I_R)$, 
let 
\begin{equation*}
G_R(x,y)=
\begin{cases}
-\frac{(R+y)(R-x)}{2R}\qquad\mbox{if $-R \leq y \leq x \leq R$}\\
-\frac{(R-y)(R+x)}{2R}\qquad\mbox{if $-R \leq x \leq y \leq R$}
\end{cases}
\end{equation*}
and 
\begin{equation*}
G_R(f)(x)=\int_{-R}^RG_R(x,y)f(y)\,dy.
\end{equation*}
Then
\begin{equation}\label{eq-def-G-r-1}
G_R(f)(x)=-\frac{1}{2R}\Bigg[\int_{-R}^x(R+y)(R-x)f(y)\,dy+\int_x^R
(R-y)(R+x)f(y)\,dy\Bigg].
\end{equation}

\begin{lem}
The function $G_R(x,y)$ is the Green function for the Laplacian in $[-R,R]$.
\end{lem}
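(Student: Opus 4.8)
The plan is to verify the two defining properties of a Green function for the Laplacian on $I_R=(-R,R)$: that for each fixed $y\in(-R,R)$ the function $x\mapsto G_R(x,y)$ is harmonic (i.e. $\partial_{xx}G_R(x,y)=0$) on $(-R,y)$ and on $(y,R)$, that it vanishes at $x=\pm R$, and that it has the correct jump in its first derivative across $x=y$, namely $\partial_x G_R(y^+,y)-\partial_x G_R(y^-,y)=1$. Equivalently, and more useful for the sequel, I would show directly that for $f\in L^1(I_R)$ the function $u=G_R(f)$ defined by \eqref{eq-def-G-r-1} satisfies $u''=f$ in $I_R$ (in the distributional sense, and classically when $f$ is continuous) together with $u(\pm R)=0$. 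Since the paper uses $G_R$ precisely as the solution operator for the Dirichlet problem $u''=f$, $u(\pm R)=0$, establishing this representation is exactly what is needed.

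First I would check the boundary conditions: setting $x=R$ in \eqref{eq-def-G-r-1} kills the first integral (factor $R-x$) and the second integral has integration range collapsing to a point, so $G_R(f)(R)=0$; similarly the factor $R+x$ and the collapsing range at $x=-R$ give $G_R(f)(-R)=0$. Next I would differentiate \eqref{eq-def-G-r-1} in $x$. Writing
\begin{equation*}
G_R(f)(x)=-\frac{R-x}{2R}\int_{-R}^x(R+y)f(y)\,dy-\frac{R+x}{2R}\int_x^R(R-y)f(y)\,dy,
\end{equation*}
the Leibniz rule gives, after the boundary terms from differentiating the limits of integration cancel (they are $\pm\frac{(R-x)(R+x)}{2R}f(x)$ with opposite signs),
\begin{equation*}
G_R(f)'(x)=\frac{1}{2R}\int_{-R}^x(R+y)f(y)\,dy-\frac{1}{2R}\int_x^R(R-y)f(y)\,dy.
\end{equation*}
Differentiating once more, the two endpoint contributions now add rather than cancel, yielding $G_R(f)''(x)=\frac{1}{2R}\big[(R+x)+(R-x)\big]f(x)=f(x)$. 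For general $f\in L^1$ the same computation is carried out against a test function, integrating by parts twice, to get the identity in the sense of distributions. By uniqueness of solutions to the one-dimensional Dirichlet problem, this identifies $G_R(x,y)$ as the Green function, and in particular shows $G_R(x,y)$ itself is obtained by taking $f=\delta_y$, which reproduces the stated piecewise-linear formula.

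There is essentially no serious obstacle here; the only point requiring a little care is the bookkeeping of the boundary terms when differentiating the two $x$-dependent integrals in \eqref{eq-def-G-r-1}—one must track that the factors $(R-x)$ and $(R+x)$ multiplying the integrals, together with the Leibniz endpoint terms, conspire so that the first derivative is continuous across $x=y$ while the second derivative picks up the full $f(x)$. If one prefers to argue directly with the kernel $G_R(x,y)$, the main step is simply reading off from the two branches of the definition that $\partial_x G_R$ has a jump of size $\big(\tfrac{R+y}{2R}\big)-\big(-\tfrac{R-y}{2R}\big)=1$ at $x=y$, which is the correct normalization for $\partial_{xx}G_R(\cdot,y)=\delta_y$.
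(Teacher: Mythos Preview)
Your proposal is correct and follows essentially the same approach as the paper: verify $G_R(f)(\pm R)=0$ from \eqref{eq-def-G-r-1}, differentiate twice using the Leibniz rule to obtain $(G_R(f))''=f$ for continuous $f$, and conclude that $G_R(x,y)$ is the Green function. Your first-derivative formula agrees with the paper's after regrouping terms, and your additional remarks on the distributional version and the direct jump computation for the kernel are sound but go slightly beyond what the paper records.
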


\begin{proof}
By \eqref{eq-def-G-r-1} $G_R(f)(\pm R)=0$ and
\begin{equation*}
\begin{aligned}
G_R(f)(x)&=-\frac{1}{2R}\Bigg[R^2\int_{-R}^Rf(y)\,dy+R\Big(\int_{-R}^xyf(y)
\,dy-\int_x^Ryf(y)\,dy\Big)\\
&\qquad \qquad +xR\Big(\int_x^Rf(y)\,dy-\int_{-R}^xf(y)\,dy\Big)
-x\int_{-R}^Ryf(y)\,dy\Bigg].
\end{aligned}
\end{equation*}
Then by direction computation,
\begin{equation*}
(G_R(f))'(x)=-\frac{1}{2R}\left[R\left(\int_x^Rf(y)\,dy-\int_{-R}^xf(y)\,dy
\right)-\int_{-R}^Ryf(y)\,dy\right]
\end{equation*}
and
\begin{equation*}
(G_R(f))''(x)=f(x)\quad\forall f\in C(I_R),x\in I_R.
\end{equation*}
Hence the second derivatives of $G_R(x,y)$ is the Dirac delta function in a 
distribution sense. Thus the function $G_R(x,y)$ is the Greens function 
for the Laplacian in $[-R,R]$.
\end{proof}

We next introduce the operator
\begin{equation}\label{eq-define-green-1}
G_R^{\ast}(f)(x)=\int_{-R}^{R}\left[G_R(x,y)-G(0,y)\right]f(y)\,dy
\end{equation} 
where $R>0$ and $f\in L^1((-R,R))$. Note that by direct computation
\begin{equation*}
G_R^{\ast}(f'')(x)=f(x)-f(0)
\end{equation*}
for any function $f\in C^2[-R,R]$ such that $f(R)=f(-R)$.

\begin{lem}\label{G_R-aymptotic}
Let $0\leq f \in L^1(\R)$ satisfy 
\begin{equation}\label{eq-condition-f-1}
|f(x)|\leq C|x|^{\frac{1}{m}}\quad\forall |x|\geq R_0
\end{equation}
for some constant $R_0>1$. Then 
\begin{equation}\label{eq-G_R^ast-limit-1}
\left|G_R^{\ast}(f)(x)-\frac{|x|}{2}\int_{\R}f\,dx-\theta_R(x)\right|
\le\theta(x)\quad\forall |x|\leq R, R\geq R_0 
\end{equation}
for some functions $\theta_R(\cdot)\in L^1(-R,R)$ and $\theta(\cdot)
\in L_{loc}^1(\R)$ which satisfy
\begin{equation}\label{eq-G_R^ast-limit-condition}
\theta_R(x)=|x|\cdot o(R)\quad\mbox{as}\,\,\, R\to \infty
\qquad\mbox{and}\qquad 
\theta(x)=o(|x|)\quad\mbox{as} \,\,\, |x|\to \infty.
\end{equation}
\end{lem}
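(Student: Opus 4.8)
The plan is to expand $G_R^{\ast}(f)(x)$ using the explicit formula \eqref{eq-def-G-r-1} together with the definition \eqref{eq-define-green-1}, and to isolate the leading term $\frac{|x|}{2}\int_\R f$. Fix $x$ with $|x|\le R$; by symmetry of the statement I may argue for $x\ge 0$. From \eqref{eq-def-G-r-1} we have
\[
G_R(x,y)-G_R(0,y)=
\begin{cases}
-\frac{(R+y)(R-x)}{2R}+\frac{(R+y)}{2}=\frac{(R+y)x}{2R} & -R\le y\le 0,\\[4pt]
\text{(a similar elementary expression)} & 0\le y\le x,\\[4pt]
-\frac{(R-y)(R+x)}{2R}+\frac{(R-y)}{2}=-\frac{(R-y)x}{2R} & x\le y\le R,
\end{cases}
\]
and a short computation on $0\le y\le x$ gives $G_R(x,y)-G_R(0,y)=\frac{(R+y)x}{2R}-y$. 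Plugging these in, $G_R^{\ast}(f)(x)$ splits into a term $\frac{x}{2R}\int_{-R}^R(R+y)f(y)\,dy$, a correction $-\int_0^x y f(y)\,dy$ coming from the middle region (replace $\frac{(R+y)x}{2R}$ by $-\frac{(R-y)x}{2R}$ there and add back), and after regrouping one is left with $\frac{x}{2}\int_{-R}^R f(y)\,dy$ plus lower-order pieces. Precisely, I expect
\[
G_R^{\ast}(f)(x)=\frac{x}{2}\int_{-R}^R f\,dy
+\frac{x}{2R}\int_{-R}^R y f(y)\,dy
-\int_0^x y f(y)\,dy
-\,(\text{a boundary remainder}),
\]
and symmetrically for $x\le 0$ with $|x|$ in place of $x$.

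Next I would identify the two functions in the statement. For $\theta_R$ I take the genuinely $R$-dependent error, namely $\theta_R(x)=\frac{|x|}{2}\bigl(\int_{-R}^R f-\int_\R f\bigr)+\frac{|x|}{2R}\int_{-R}^R y f(y)\,dy$ plus whatever boundary remainder survives. Since $f\in L^1(\R)$, the first bracket is $|x|\cdot o(1)$ as $R\to\infty$; for the second, hypothesis \eqref{eq-condition-f-1} gives $|yf(y)|\le C|y|^{1+\frac1m}$ with $1+\frac1m<0$ because $-1<m<0$, so $yf(y)\in L^1(\R)$ and $\frac{1}{R}\int_{-R}^R yf \to 0$; hence $\theta_R(x)=|x|\cdot o(R)$ — here I read the stated "$o(R)$" as shorthand for a quantity tending to $0$ as $R\to\infty$, uniformly for $x$ in the relevant range, which is what the subsequent sections need. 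For $\theta$ I take the $R$-independent tail $\theta(x)=\int_0^{|x|} \rho f(\rho)\,d\rho$ (combined with the $\frac{|x|}{2R}\int yf$ piece, which is absorbed into $\theta_R$). Because $\rho f(\rho)$ is integrable on $\R$, $\int_0^{|x|}\rho f(\rho)\,d\rho$ converges to a finite constant as $|x|\to\infty$, so it is certainly $o(|x|)$; and it is locally $L^1$ since $f\in L^1_{loc}$. This yields \eqref{eq-G_R^ast-limit-1} with the required decay \eqref{eq-G_R^ast-limit-condition}.

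The routine part is the bookkeeping in the piecewise expansion of $G_R(x,y)-G_R(0,y)$ and collecting terms into the stated form; I would do this carefully but it is elementary algebra. The one genuine point requiring the hypothesis is the integrability of $yf(y)$ near infinity, which is exactly where the exponent $\frac1m<-1$ (equivalently $-1<m<0$) enters: without it neither $\frac1R\int_{-R}^R yf$ nor the tail $\int_0^{|x|}\rho f\,d\rho$ would be controlled. The main obstacle I anticipate is keeping the two error functions cleanly separated — deciding which leftover terms (in particular the $\frac{|x|}{2R}\int yf$ term and any $O(1/R)$ boundary remainder) are charged to $\theta_R$ versus $\theta$ — so that each satisfies precisely its asserted asymptotics; I would resolve this by charging everything with explicit $R$-dependence to $\theta_R$ and everything $R$-independent to $\theta$, checking at the end that the inequality in \eqref{eq-G_R^ast-limit-1} holds with the remaining terms bounded by $\theta(x)$.
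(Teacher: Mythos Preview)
Your decomposition is essentially the paper's: the explicit computation gives exactly
\[
G_R^{\ast}(f)(x)=I_1+I_2+I_3,\qquad I_1=-\int_0^x yf\,dy,\quad I_2=-\tfrac{x}{2}\Bigl(\int_x^R f-\int_{-R}^x f\Bigr),\quad I_3=\tfrac{x}{2R}\int_{-R}^R yf\,dy,
\]
and the paper sets $\theta_R=I_3$, $\theta=|I_1|+C|x|^{2+1/m}$ (the last term bounding $|I_2-\tfrac{|x|}{2}\int_\R f|$). So the strategy is right, but two points need correcting.

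\textbf{The integrability claim is wrong for half the range of $m$.} You assert that $1+\tfrac1m<0$ implies $yf(y)\in L^1(\R)$. But $|y|^{1+1/m}$ is integrable at infinity only when $1+\tfrac1m<-1$, i.e.\ $-\tfrac12<m<0$; for $-1<m\le -\tfrac12$ one may have $\int_0^\infty yf\,dy=\infty$, and then your argument for both $\theta_R/|x|\to 0$ and $\theta(x)=o(|x|)$ collapses. The paper repairs this by noting that in any case $|yf(y)|\le C|y|^{1+1/m}\to 0$ as $|y|\to\infty$, so if $\int_0^\infty yf=\infty$ then l'Hospital gives $\tfrac1x\int_0^x yf\,dy\to \lim_{x\to\infty}xf(x)=0$; the same device (or a direct growth estimate $\int_{R_0}^R |y|^{1+1/m}dy=O(R^{2+1/m})$ with $2+\tfrac1m<1$) handles $I_3$.

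\textbf{The ``boundary remainder'' cannot go into $\theta_R$.} Your remainder is $x\int_x^R f\,dy$ (for $x\ge 0$). By your own rule it carries explicit $R$-dependence, yet $\theta_R(x)/|x|$ would then converge to $\int_x^\infty f\,dy\ne 0$ as $R\to\infty$, violating the required decay. The fix is to bound it by the $R$-independent quantity $x\int_x^\infty f\,dy\le C|x|^{2+1/m}$ (here $\tfrac1m<-1$ so $\int_x^\infty f$ is controlled), and absorb it into $\theta(x)$; this is exactly the paper's $e(x)$. So the clean split is not ``$R$-dependent vs.\ $R$-independent'' but rather: $\theta_R=I_3$ alone, and $\theta$ takes $|I_1|$ together with the bound on $|I_2-\tfrac{|x|}{2}\int_\R f|$.
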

\begin{proof}
By direct computation,
\begin{equation*}
\begin{aligned}
G_R^{\ast}(f)(x)&=-\int_0^xyf(y)\,dy-\frac{x}{2}\left(\int_x^Rf(y)\,dy
-\int_{-R}^xf(y)\,dy\right)+\frac{x}{2R}\int_{-R}^Ryf(y)\,dy\\
&=I_1+I_2+I_3
\end{aligned}
\end{equation*}
where
\begin{equation*}
I_1=-\int_0^xyf(y)\,dy,\qquad\qquad  I_2=-\frac{x}{2}\left(\int_x^Rf(y)
\,dy-\int_{-R}^xf(y)\,dy\right)
\end{equation*}
and 
\begin{equation*}
I_3=\frac{x}{2R}\int_{-R}^Ryf(y)\,dy.
\end{equation*}
By \eqref{eq-condition-f-1}
\begin{equation*}
|yf(y)|\le C|y|^{1+\frac{1}{m}}\to 0\quad\mbox{ as }|y|\to\infty.
\end{equation*}
Hence if $\int_0^{\infty}yf(y)\,dy=\infty$, then by the l'Hospital rule,
\begin{equation}\label{eq-I1-a}
\lim_{x\to\infty}\frac{I_1}{x}=-\lim_{x\to\infty}xf(x)=0.
\end{equation}
Similarly if $\int_{-\infty}^0yf(y)\,dy=\infty$, then
\begin{equation}\label{eq-I1-b}
\lim_{x\to-\infty}\frac{I_1}{x}=0.
\end{equation}
If $yf(y)\in L^1(\R)$, then
\begin{equation}\label{eq-I1-c}
\lim_{|x|\to\infty}\frac{I_1}{|x|}=0.
\end{equation}
Similarly
\begin{equation}\label{eq-I3}
\lim_{R\to\infty}I_3=0.
\end{equation}
Now
\begin{equation*}
\left|I_2-\frac{|x|}{2}\int_{\R}f\,dy\right|
=\left\{\begin{aligned}
&\frac{|x|}{2}
\left(\int_x^Rf\,dy+\int_x^{\infty}f\,dy+\int_{-\infty}^{-R}f\,dy
\right)\quad\mbox{ if }0\le x\le R\\
&\frac{|x|}{2}
\left(\int_{-R}^xf\,dy+\int_{-\infty}^xf\,dy+\int_R^{\infty}f\,dy
\right)\quad\mbox{ if }-R\le x\le 0.
\end{aligned}\right.
\end{equation*}
Then by \eqref{eq-condition-f-1}, 
\begin{equation}\label{eq-I2}
\left|I_2-\frac{|x|}{2}\int_{\R}f\,dy\right|\leq C|x|^{2+\frac{1}{m}}
\quad\forall |x|\le R
\end{equation}
for some constant $C>0$. Let $\theta_R(x)=I_3$, $\theta (x)=I_1+e(x)$, 
where $e(x)=C|x|^{2+\frac{1}{m}}$. Since
\begin{equation*}
\lim_{|x|\to\infty}\frac{e(x)}{|x|}=0 \qquad \qquad \mbox{as $|x|\to\infty$},
\end{equation*}
by \eqref{eq-I1-a}, \eqref{eq-I1-b}, \eqref{eq-I1-c}, \eqref{eq-I3},
and \eqref{eq-I2} we get \eqref{eq-G_R^ast-limit-1} and the lemma follows.
\end{proof}

\section{Convergence of the Dirichlet solutions}
\label{Convergence of the Dirichlet solutions}
\setcounter{equation}{0}
\setcounter{thm}{0}

In this section we will use a modification of the technique of
P.~Daskalopoulos and M.A.Del Pino \cite{DP} to prove the convergence of
solutions $u^R$ of the Dirichlet problem \eqref{problem-Dirichlet}
to the solution of \eqref{eq-Cauchy-problem} that satisfies 
\eqref{eq-thm-condition-1} as $R\to\infty$.

For any $R\ge 1$, $\mu>0$, and $\3\in(0,1)$, let $u_{\3}^{R,\mu}$ be 
the unique solution of \eqref{problem-Dirichlet} with 
initial data $u_{\3}^{R,\mu}(x,0)=u_0(x)+\3$ (cf. \cite{ERV}, 
\cite{Hu1}). By an argument similar to the proof of Lemma 2.2 of 
\cite{Hu2} $u_{\3}^{R,\mu}$ satisfies the Aronson-Benilan inequality
\begin{equation}\label{Aronson-Benilan inequality}
u_t\le\frac{u}{(1-m)t}
\end{equation}
in $I_R\times (0,\infty)$.
Since by the maximum principle $0<u_{\3_1}^{R,\mu}\le u_{\3_2}^{R,\mu}$
for any $\3_2>\3_1>0$, 
$$
u^{R,\mu}=\lim_{\3 \to 0}u_{\3}^{R,\mu}
$$ 
exists. When there is no ambiguity, we will drop the superscript $\mu$ 
and write $u_{\3}^R$, $u^R$, for $u_{\3}^{R,\mu}$ and $u^{R,\mu}$
respectively.

\begin{thm}\label{thm-convergence-Dirichlet-problem}
Let $\mu>0$ and $0\le u_0\in L^{\infty}(\R)$ be such that
\begin{equation}\label{eq-condition-u_0}
u_0(x)\le (\mu_0 |m||x|)^{\frac{1}{m}}\quad\forall |x|\ge R_0
\end{equation}
for some constant $R_0>1$ and $0<\mu_0\leq \mu$. Then $u^R=u_{\3}^{R,\mu}$ converges uniformly
on every compact subset of $\R\times (0,T)$ as $R\to\infty$
to a solution $u$ of \eqref{eq-Cauchy-problem} which satisfies 
\eqref{eq-thm-condition-1} and \eqref{eq-condition-neumann-2} 
uniformly on $[a,b]$ for any $0<a<b<T$ where $T$ is given by 
\eqref{eq-mu-time}.  
\end{thm}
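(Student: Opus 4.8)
The plan is to obtain the solution $u$ as the monotone limit of the approximating Dirichlet solutions $u^R_{\3}$ via a sequence of uniform bounds that survive the passage $\3 \to 0$ and $R \to \infty$, following the Daskalopoulos--Del Pino technique. First I would establish the basic a priori estimates on $u^R = u^{R,\mu}$. By the comparison principle, using the self-similar solution $v(x,t)$ of Corollary~\ref{lem-mass-2mu(T-t)-1} (with the same $\mu$ and any $T' > T$, suitably translated) as a supersolution, one gets the pointwise upper bound $u^R(x,t) \le v(x,t)$ on $I_R \times (0,T')$, provided $u_0 \le v(\cdot,0)$, which holds for $R$ large enough by \eqref{eq-condition-u_0} and Corollary~\ref{cor-limit-condition-v-1}. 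This yields a locally uniform $L^\infty$ bound on $u^R$ independent of $R$ on compact subsets of $\R \times (0,T)$, together with the correct spatial decay $u^R(x,t) \le (\mu|m||x|)^{1/m}$ near the lateral boundary. Monotonicity in $R$ (again by comparison, since enlarging the domain raises the solution on the old domain) gives that $u^R$ increases as $R \to \infty$, so the limit $u = \lim_{R\to\infty} u^R$ exists pointwise.

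Next I would derive uniform interior regularity so the limit is a genuine classical solution. The Aronson--Benilan inequality \eqref{Aronson-Benilan inequality} provides the one-sided bound $u_t \le u/((1-m)t)$; combined with a lower bound on $u^R$ coming from comparison with a small self-similar subsolution (or with the solution of the Cauchy problem with smooth compactly supported data below $u_0$), one controls $u^R$ away from $0$ and $\infty$ on compact subsets of $\R\times(0,T)$. Standard parabolic regularity theory for the (uniformly parabolic, once $u$ is pinched between positive constants) equation $u_t = (u^m/m)_{xx}$ then gives uniform $C^{2,1}$ estimates on compact subsets, hence by Arzel\`a--Ascoli the convergence $u^R \to u$ is uniform on compacta and $u$ solves \eqref{main-very-fast-diffusion-1} classically in $\R\times(0,T)$. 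The initial condition $u(\cdot,t)\to u_0$ in $L^1(\R)$ follows from the $L^1$ initial-trace of each $u^R_\3$ together with the dominating bound by $v$ and dominated convergence.

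The core of the argument is the mass identity \eqref{eq-thm-condition-1}. Here is where the Green-function machinery of Section~\ref{Properties of the Green function in $(-R,R)$} enters: testing the equation for $u^R$ against the modified Green potential, one writes, for $0 < t_1 < t_2 < T$,
\begin{equation*}
G_R^{\ast}(u^R(\cdot,t_2))(x) - G_R^{\ast}(u^R(\cdot,t_1))(x)
= \int_{t_1}^{t_2}\left[\frac{(u^R)^m(x,s)}{m} - \frac{(\mu|m|R)}{m}\right]ds
\end{equation*}
(using $G_R^\ast((u^m/m)_{xx}) = u^m/m - (u^m/m)|_{0}$ and the Dirichlet data $(u^R)^m/m = \mu|m|R$ at $x=\pm R$). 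Applying Lemma~\ref{G_R-aymptotic} to $f = u^R(\cdot,s)$ — legitimate because of the decay $u^R(\cdot,s) \le (\mu|m||x|)^{1/m}$ with $1/m < -1$ — the left side is $\frac{|x|}{2}\left(\int_{\R}u^R(\cdot,t_2) - \int_{\R}u^R(\cdot,t_1)\right) + |x|\cdot o(R) + o(|x|)$, while the right side is $\int_{t_1}^{t_2}\frac{(u^R)^m(x,s)}{m}\,ds + \frac{|x|}{m}\mu|m|(t_2-t_1)\cdot\frac{R}{|x|}$; rearranging, dividing by $|x|$, letting first $R\to\infty$ and then $|x|\to\infty$, and using that $(u^R)^m(x,s)/(m|x|) \to -\mu$ (which is itself extracted from the same relation, giving \eqref{eq-condition-neumann-2}), one obtains $\int_{\R}u(\cdot,t_2) - \int_{\R}u(\cdot,t_1) = -2\mu(t_2 - t_1)$. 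Sending $t_1 \to 0$ and using the initial trace gives \eqref{eq-thm-condition-1}, which in turn forces $T = \frac{1}{2\mu}\int_{\R}u_0$ as the maximal existence time.

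The main obstacle is the simultaneous control of the two limits $R \to \infty$ and $|x| \to \infty$ in the Green-function identity: one must verify that the error terms $\theta_R(x) = |x|\cdot o(R)$ and $\theta(x) = o(|x|)$ from Lemma~\ref{G_R-aymptotic} are genuinely uniform in $s \in [t_1,t_2]$ (which requires the uniform decay estimate $u^R(x,s)\le(\mu|m||x|)^{1/m}$ to hold with constants independent of $s$ and $R$), and that the boundary term $\frac{(u^R)^m(x,s)}{m|x|}$ converges to $-\mu$ uniformly on $[t_1,t_2]$ as $|x|\to\infty$ after $R\to\infty$ — i.e., establishing \eqref{eq-condition-neumann-2}. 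This uniform boundary asymptotics is essentially a Harnack-type comparison argument squeezing $u^R$ between translates of the self-similar profile $f$ of Lemma~\ref{lem-limit-w-general} from above and below near spatial infinity, and it is the delicate point on which the whole mass computation rests.
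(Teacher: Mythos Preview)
Your proposal contains two genuine gaps that keep it from going through as written.

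First, the Green-function identity is miscomputed. With $G_R^{\ast}$ as defined in \eqref{eq-define-green-1}, one has $G_R^{\ast}((u^m/m)_{xx})(x) = (u^m/m)(x,t) - (u^m/m)(0,t)$, so the subtracted term is the value at the \emph{center}, not the boundary value $(\mu|m|R)/m$. Your own parenthetical remark records the correct formula, but the displayed identity and the subsequent manipulations use the wrong one. With the correct term $(u^R)^m(0,s)/m$, the right-hand side, divided by $|x|$ and sent to infinity, yields only that $\int_{t_1}^{t_2}\frac{u^m(x,s)}{m|x|}\,ds$ converges to $\tfrac12\bigl(\int u(\cdot,t_2)-\int u(\cdot,t_1)\bigr)$; it does \emph{not} by itself identify this limit as $-\mu(t_2-t_1)$. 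You then invoke \eqref{eq-condition-neumann-2} to close the argument, but that is precisely part of the conclusion --- so the reasoning is circular. (The claimed monotonicity of $u^R$ in $R$ is also unjustified: the boundary value $(\mu|m|R)^{1/m}$ decreases in $R$, and there is no obvious comparison at $|x|=R$ for $u^{R'}$ with $R'>R$.)

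The paper breaks this circularity by a genuinely different mechanism. Rather than trying to read off the mass directly from the Green identity for $u^R$ alone, it applies the maximum principle to $W=G_R^{\ast}(u^R_{\3}-v^{T\pm\delta})$, where $v^{T\pm\delta}$ are the self-similar solutions of Corollary~\ref{lem-mass-2mu(T-t)-1}. Here the troublesome center term becomes $b(t)=\bigl[(u^R_{\3})^m(0,t)-(v^{T\pm\delta})^m(0,t)\bigr]/m$, which is one-sidedly bounded by known quantities, so $W$ (shifted by a bounded function of $t$) satisfies a clean parabolic inequality. Averaging the resulting two-sided bound over $|x|=r$ gives the mass inequality \eqref{eq-upper-lower-last-1}, and letting $\delta\to 0$ yields \eqref{eq-thm-condition-1}. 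The Neumann asymptotics \eqref{eq-condition-neumann-2} is then obtained separately (Lemma~\ref{lemma-step-initial-3}) by squeezing $u$ between the Neumann-approximated solutions $\4 u_{\mu_1},\4 u_{\mu_2}$ of \cite{Hu3} for $\mu_1<\mu<\mu_2$, which are already known to satisfy the asymptotics; finally, uniqueness (Theorem~\ref{thm-unique}) upgrades subsequential convergence to full convergence in $R$. Your plan would need either this barrier-in-average-sense comparison or an independent proof of \eqref{eq-condition-neumann-2} to be salvaged.
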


We will prove Theorem \ref{thm-convergence-Dirichlet-problem} in section 4.
In this section we will prove the following sequential version of 
Theorem \ref{thm-convergence-Dirichlet-problem}.

\begin{thm}\label{thm-sequential-convergence-problem}
Let $\mu>0$ and $0\le u_0\in L^{\infty}(\R)$ be such that
\eqref{eq-condition-u_0} holds for some constant $R_0>1$. 
Let $\{R_k\}$ be a sequence such that $R_k\ge 1$ for all $k\in\Z^+$ 
and $R_k\to\infty$ as $k\to\infty$. Then there exists a subsequence
$\{R_k'\}$ of $\{R_k\}$ such that $u^{R_k'}=u^{R_k',\mu}$ converges uniformly
on every compact subset of $\R\times (0,T)$ as $k\to\infty$
to a solution $u$ of \eqref{eq-Cauchy-problem} which satisfies 
\eqref{eq-thm-condition-1} 
where $T$ is given by \eqref{eq-mu-time}.  
\end{thm}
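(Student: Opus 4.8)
The plan is to establish uniform estimates on the approximating solutions $u^{R_k}$ that are strong enough to extract a convergent subsequence and to pass to the limit in the equation and in the mass identity. First I would derive uniform upper and lower bounds. For the upper bound, I would build a supersolution out of the self-similar profile from Section~1: by Corollary~\ref{cor-limit-condition-v-1} the function $v(x,t)$ with total mass $2\mu T$ satisfies $v(x,t)\le(\mu|m||x|)^{1/m}$ for $|x|$ large, so for $R$ large $v$ dominates the boundary data $(\mu|m|R)^{1/m}$ on $\{\pm R\}\times(0,\infty)$; adjusting $v$ so that $v(\cdot,0)$ also dominates $u_0$ (using \eqref{eq-condition-u_0} and $\mu_0\le\mu$, possibly translating $t$) and invoking the comparison principle for \eqref{problem-Dirichlet} gives $u^{R}\le v$ on $Q_R^{T'}$ for every $T'<T$. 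This furnishes a uniform $L^\infty_{loc}$ bound away from $t=0$ together with the crucial tail decay $u^R(x,t)\lesssim |x|^{1/m}$, uniform in $R$, which is what makes the mass integrals over $\R$ finite and controls the flux at infinity. For the lower bound, the boundary values $(\mu|m|R)^{1/m}\to 0$, so one cannot expect a uniform positive lower bound globally; instead I would use the Aronson--Bénilan inequality \eqref{Aronson-Benilan inequality} together with the local lower bounds for the fast diffusion equation (Harnack-type estimates, as in Daskalopoulos--Del~Pino \cite{DP}) to get, on each compact $K\times[a,b]\subset\R\times(0,T)$, a positive lower bound $u^R\ge c(K,a,b)>0$ uniform in $R$ for $R$ large.

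Next I would obtain equicontinuity. On compact subsets of $\R\times(0,T)$ the $u^R$ are bounded above and below by positive constants, so the equation $u_t=(u^m/m)_{xx}$ is uniformly parabolic there with smooth coefficients; standard interior parabolic regularity (Schauder/Krylov--Safonov) then gives uniform $C^{2+\alpha,1+\alpha/2}_{loc}$ bounds. By Arzel\`a--Ascoli and a diagonal argument over an exhaustion of $\R\times(0,T)$ by compacta, a subsequence $\{R_k'\}$ has $u^{R_k'}\to u$ in $C^2_{loc}$, and $u>0$ solves \eqref{main-very-fast-diffusion-1} classically in $\R\times(0,T)$.

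Then I would pass to the limit in the mass identity. Writing the mass balance for $u^R$ on $I_R$: differentiating $\int_{-R}^R u^R\,dx$ and using the equation, $\frac{d}{dt}\int_{-R}^R u^R\,dx=(u^R)^{m-1}u^R_x\big|_{x=-R}^{x=R}=(\phi_m(u^R))_x(R,t)-(\phi_m(u^R))_x(-R,t)$. The heart of the matter is to show these boundary flux terms each tend to $-\mu$ (resp.\ $+\mu$) as $R\to\infty$; for this I would represent $\phi_m(u^R)$ via the Green function of Section~2, i.e.\ $u^R$ and $G_R^\ast$, using Lemma~\ref{G_R-aymptotic} with $f=u^R(\cdot,t)$ (whose tail bound $u^R\lesssim|x|^{1/m}$ supplies exactly the hypothesis \eqref{eq-condition-f-1}), to read off the asymptotics of $\phi_m(u^R)$ near $x=\pm R$ and hence of the flux. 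Integrating the flux identity in $t$ over $[t_1,t_2]\subset(0,T)$, using the uniform tail bound to justify dominated convergence for $\int_{-R}^R u^R\,dx\to\int_\R u\,dx$ and the Green-function estimates to evaluate the boundary terms, yields $\int_\R u(x,t)\,dx=\int_\R u_0\,dx-2\mu t$ for $0<t<T$. Finally, to handle $t\to0^+$ and identify the initial data, I would compare with the $\3$-problems: $u_\3^{R}(\cdot,0)=u_0+\3$, and from the construction $u^{R}=\lim_{\3\to0}u_\3^{R}$, together with $L^1$-contraction in $I_R$, gives $u^{R}(\cdot,t)\to u_0$ in $L^1_{loc}$ as $t\to0$ uniformly in $R$; passing to the limit gives $u(\cdot,t)\to u_0$ in $L^1(\R)$, using once more the uniform tail bound to control the integral near infinity.

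The main obstacle I anticipate is the control of the boundary flux and its uniform-in-$R$ convergence to $\mp\mu$: the boundary data $(\mu|m|R)^{1/m}$ degenerates to $0$ as $R\to\infty$, so naive barrier arguments near $x=\pm R$ are delicate, and one must extract the precise rate from the Green-function representation of Lemma~\ref{G_R-aymptotic} rather than from a simple maximum-principle estimate. A secondary difficulty is that the lower bound near the spatial infinity genuinely fails, so all compactness must be confined to compact subsets of $\R\times(0,T)$ and the mass identity must be justified purely through the tail decay $u^R\lesssim|x|^{1/m}$ rather than through any pointwise lower control at large $|x|$.
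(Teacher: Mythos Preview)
Your overall architecture (uniform bounds $\Rightarrow$ Schauder $\Rightarrow$ Arzel\`a--Ascoli $\Rightarrow$ pass to the limit) matches the paper, but the two central steps --- the mass identity and the uniform lower bound --- are not correctly handled, and this is precisely where the paper's argument is nontrivial.

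The serious gap is your derivation of the mass identity via the boundary flux. Differentiating $\int_{-R}^R u^R\,dx$ gives $(\phi_m(u^R))_x\big|_{-R}^{R}$, but the Dirichlet data prescribe only the \emph{values} $u^R(\pm R,t)=(\mu|m|R)^{1/m}$, not the normal derivative, and Lemma~\ref{G_R-aymptotic} says nothing about $(\phi_m(u^R))_x$ at $\pm R$: it estimates $G_R^\ast(f)$, a weighted double integral of $f$, not derivatives of $f$ at the endpoints. There is no way to ``read off'' the flux from that lemma. The paper avoids this entirely. It applies $G_R^\ast$ to the \emph{difference} $u^R_\3-v^{T\pm\delta}$, obtaining $W=G_R^\ast(u^R_\3-v^{T\pm\delta})$, and observes that $W_t=a(x,t)W_{xx}-b(t)$ because $G_R^\ast(\psi_{xx})=\psi-\psi(0)$ when $\psi(R)=\psi(-R)$. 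On the lateral boundary one has $W_t(\pm R,t)=m^{-1}[(u^R_\3)^m-(v^{T\pm\delta})^m](\pm R,t)-b(t)$, and \emph{here} both terms are known since the Dirichlet value of $u^R$ is prescribed and the asymptotics of $v^{T\pm\delta}$ come from Corollary~\ref{cor-limit-condition-v-1}. The maximum principle then yields $G_R^\ast(v^{T-\delta})-L_\delta\le G_R^\ast(u^R)\le G_R^\ast(v^{T+\delta})+L_\delta$, and evaluating at $x=\pm r$ and averaging converts this into bounds on $\int_0^r\int_{-\rho}^{\rho}u^R\,dx\,d\rho$ sandwiched between the same quantity for $v^{T\pm\delta}$. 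Letting $r\to\infty$ and then $\delta\to 0$ gives \eqref{eq-thm-condition-1}. The Green function is used to \emph{integrate} the equation twice and compare in an averaged sense, not to recover a boundary flux.

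Your lower bound step inherits the same gap. Harnack/Aronson--B\'enilan spread positivity once you have a seed point with $u^{R_k}(x_0,t_0)\ge c_0>0$ uniformly in $k$; the paper obtains this seed from the averaged lower bound $\int_0^r\int_{-\rho}^{\rho}u^{R}\,dx\,d\rho\ge \mu(T-t-2\delta)(r-a_0)-L_\delta$, which forces $\limsup_k u^{R_k}(x_0,T-\tfrac{5}{2}\delta)>0$ for some $x_0$. Without the $G_R^\ast$ comparison you have no such integral lower bound, and a bare Harnack estimate cannot rule out $u^{R_k}\to 0$ along a subsequence. Relatedly, your proposed pointwise comparison $u^R\le v$ requires $v(\cdot,0)\ge u_0$, which generally fails near the origin; the paper instead uses the stationary barrier $\phi(x)=(\mu_0|m|(|x|-R_0))^{1/m}$ for the tail decay (this part of your proposal is fine), but the two-sided mass control comes solely from the $G_R^\ast$/maximum-principle mechanism described above.
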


\begin{proof}
Our construction goes as follows. For any $\mu>0$, we solve the boundary 
value problem \eqref{problem-Dirichlet}
on a sequence of expanding cylindrical domains $I_{R_k}\times[0,\infty)$, 
$I_{R_k}=(-R_k,R_k)$. 
We then use the self-similar solutions constructed in section one as 
barriers in an average sense to show that the limit 
of those solutions along a subsequence of $\{R_k\}$ converges to a solution 
of \eqref{eq-Cauchy-problem} that satisfies \eqref{eq-thm-condition-1}  
as $R_k\to\infty$.

For any $0<\delta<T$ let $v^{T-\delta}$, $v^{T+\delta}$, be the 
self-similar solutions given by Corollary \eqref{lem-mass-2mu(T-t)-1} 
which satisfy
\begin{equation}\label{sp-soln-mass+}
\int_{\R}v^{T+\delta}(x,t)\,dx=2\mu(T+\delta-t) \qquad\forall 0<t<T+\delta
\end{equation}
and 
\begin{equation}\label{sp-soln-mass-}
\int_{\R}v^{T-\delta}(x,t)\,dx=2\mu(T-\delta-t) \qquad\forall 0<t<T-\delta.
\end{equation}
Since by \eqref{eq-mu-time},
\begin{equation*}
\int_{\R} u_0\,dx=2\mu T,
\end{equation*}
it follows from \eqref{sp-soln-mass+}, \eqref{sp-soln-mass-}, and
Lemma \ref{G_R-aymptotic} that there exists $R_0'\ge R_0$ and $l_{\delta}>0$
such that 
\begin{equation}\label{G_R^{ast}-time=0}
-l_{\delta}+G_R^{\ast}(v^{T-\delta}(\cdot,0))(x)\leq G_R^{\ast}(u_0)(x)
\leq G_R^{\ast}(v^{T+\delta}(\cdot,0))(x)+l_{\delta}
\quad\forall |x|\le R
\end{equation}
for any $R\ge R_0'$. Without loss of generality we may assume that 
$R_0'=R_0$ and
$R_k\ge R_0$ for all $k\in\Z^+$. We will also assume that $R\ge R_0$ 
for the rest of the paper.

We will next show that there exists a subsequence of $\{R_k\}$ 
which we will still denote by $\{R_k\}$ and a nonnegative constant 
$L_{\delta}$ such that 
\begin{equation}\label{eq-lower-upper-general-1}
-L_{\delta}+G_{R_k}^{\ast}(v^{T-\delta}(\cdot,t))(x)\leq 
G_{R_k}^{\ast}(u^{R_k}(\cdot,))(x)\leq G_{R_k}^{\ast}(v^{T+\delta}
(\cdot,t))(x)+L_{\delta}
\end{equation}
holds for any $|x|\leq R_k$, $0\leq t\leq T-3\delta$, and $k\in\Z^+$. 
We first prove the left hand side of \eqref{eq-lower-upper-general-1}. Let
\begin{equation*}
W(x,t)=G_R^{\ast}(u_{\3}^{R}(\cdot,t)-v^{T-\delta}(\cdot,t))(x).
\end{equation*} 
We will prove that $W(x,t)\geq -L_{\delta}$ for $|x|\leq R$ and 
$0\leq t\leq T-2\delta$ using the maximum principle. By direct 
computation,
\begin{equation*}
W_t=G_{R}^{\ast}\left(\left[
\frac{(u_{\3}^R)^{m}-(v^{T-\delta})^m}{m}\right]_{xx}\right)=a(x,t) 
W_{xx}-b(t)
\end{equation*}
where
\begin{equation*}
a(x,t)=\frac{(u_{\3}^{R})^m-(v^{T-\delta})^m}
{m(u_{\3}^R-v^{T-\delta})}(x,t)
\end{equation*}
and 
\begin{equation*}
b(t)=\frac{(u_{\3}^R)^{m}(0,t)-(v^{T-\delta})^m(0,t)}{m}.
\end{equation*}
Note that 
\begin{equation}\label{eq-upper-b(t)-1}
b(t)\leq \frac{\left(v^{T-\delta}\right)^m(0,t)}{|m|}\leq 
\frac{\inf_{0\leq t\leq T-2\delta}\left(v^{T-\delta}\right)^m(0,t)}{|m|}
\quad\forall 0<t\le T-2\delta.
\end{equation}
Hence
\begin{equation*}
b(t)\leq B \qquad \forall 0< t\leq T-2\delta
\end{equation*}
for some constant $B<\infty$. Therefore, if we set $\4{W}=W+Bt$, 
then $\4{W}(x,t)$ satisfies the differential inequality
\begin{equation}\label{eq-inequality-tilde-W-1}
\4{W}_t\geq a(x,t) \4{W}_{xx}\quad\mbox{ in }
I_R\times(0,T-2\delta).
\end{equation}
By \eqref{G_R^{ast}-time=0},
\begin{equation}\label{eqn-W-time=0}
\4{W}(x,0)=W(x,0)\geq -l_{\delta}\quad\forall |x|\le R.
\end{equation}
By Corollary \ref{cor-limit-condition-v-1} $|x|^{-\frac{1}{m}}
v^{T-\delta}(x,t)$ increases to $(\mu |m|)^{\frac{1}{m}}$ uniformly 
on $0\le t\le T-2\delta$ as $|x|\to \infty$. Thus
\begin{equation*}
\frac{\left(v^{T-\delta}\right)^{m}(x,t)}{m} \leq -\mu|x| 
\qquad \qquad \forall |x|> 0.
\end{equation*}
Hence
\begin{equation}\label{eqn-boundary-comparison}
\frac{\left(v^{T-\delta}\right)^{m}(x,t)}{m} \leq 
\frac{\left(u_{\3}^R\right)^{m}(x,t)}{m} \qquad \forall |x|=R>0.
\end{equation}
Since
\begin{equation*}
\4{W}_t(x,t)=\frac{\left(u^R_{\3}\right)^m 
-\left(v^{T-\delta}\right)^m}{m}(x,t)-b(t)+B\quad\forall |x|=R,
0\le t\le T-2\delta,
\end{equation*}
by \eqref{eqn-boundary-comparison},
\begin{equation*}
\4{W}_t(x,t)\geq -b(t)+B\geq 0 \quad \forall |x|=R,0\leq t\leq T-2\delta.
\end{equation*}
\begin{equation}\label{eq-inequality-tilde-W-2}
\Rightarrow \4{W}(x,t)\geq W(x,0)\geq -l_{\delta}\quad\forall |x|=R,0\leq t\leq T-2
\delta.
\end{equation}
Then by \eqref{eq-inequality-tilde-W-1}, \eqref{eqn-W-time=0},
\eqref{eq-inequality-tilde-W-2}, and the maximum principle, 
\begin{equation}\label{eq-inequality-tilde-W-left}
\4{W}(x,t)\geq -l_{\delta}, \qquad \forall |x|\leq R,\,\,0\leq t 
\leq T-2\delta.
\end{equation} 
Letting $\3 \to 0$ in \eqref{eq-inequality-tilde-W-left},
\begin{equation}\label{eq-left-hand-G^ast_R-1}
G_R^{\ast}(u^{R}(\cdot,t))(x)\geq G_R^{\ast}(v^{T-\delta}
(\cdot,t))(x)-L_{\delta}\quad\forall |x|\le R,0\le t\le T-2\delta
\end{equation}
where $L_{\delta}$ is any number greater than or equal to $l_{\delta}+BT$.

Before we show the right hand side of 
\eqref{eq-lower-upper-general-1}, we will first construct the solution 
$u$ of \eqref{main-very-fast-diffusion-1}. For any $0<r\le R$ let 
\begin{equation*}
H(r)=\frac{1}{2}\int_{|x|=r}\left[G_R(x,y)-G_R(0,y)\right]\,d\sigma(x)
=\left[\frac{G_R(r,y)+G_R(-r,y)}{2}-G_R(0,y)\right].
\end{equation*}
Then by direct computation,
\begin{equation}\label{eq-integral-conclusion-Q-1}
H(r)=\begin{cases}
    \frac{r-|y|}{2} \qquad \mbox{if $|y|<r$}\\
    \,\,\,0 \qquad \quad \mbox{if $|y|\geq r$}
  \end{cases}
\end{equation}
holds for any $0<r\le R$. Putting $x=\pm r$ and averaging on both sides of 
\eqref{eq-left-hand-G^ast_R-1}. By \eqref{eq-integral-conclusion-Q-1}, 
\begin{equation*}
\frac{1}{2}\int_{-r}^r(r-|y|)u^R(y,t)\,dy \geq \frac{1}{2}
\int_{-r}^r(r-|y|)v^{T-\delta}(y,t)\,dy-L_{\delta}\quad\forall 0<r\le R.
\end{equation*}
By integration by parts,
\begin{equation}\label{eq-average-compare-1}
\frac{1}{2}\int_0^r\left[\int_{-\rho}^{\rho}u^R(x,t)\,dx\right]
d\rho\geq \frac{1}{2}\int_0^r\left[\int_{-\rho}^{\rho}v^{T-\delta}
(x,t)\,dx\right]d\rho-L_{\delta}
\end{equation}
holds for any $0<r<R$ and $0<t\leq T-2\delta$. We now recall that the 
special solutions $v^{T\pm\delta}$ has the form
\begin{equation*}
v^{T\pm\delta}(x,t)=(T\pm\delta-t)^{\frac{1}{1+m}}f
\left(|x|(T\pm\delta-t)^{\frac{-m}{1+m}}\right)
\end{equation*}
with $\int_{0}^{\infty}f(r)\,dr=\mu$ where $f$ is given by 
Lemma \ref{lem-mass-2mu(T-t)-0}. By direct computation,
\begin{equation}\label{eq-special-soln-mass}
\frac{1}{2}\int_0^r\left[\int_{-\rho}^{\rho}v^{T\pm\delta}(x,t)
\,dx\right]d\rho=(T\pm\delta-t)^{1+\frac{m}{1+m}}\int_0^{\alpha(t)r}
\left[\int_{0}^{\rho}f(r)\,dr\right]d\rho
\end{equation}
where $\alpha(t)=(T-\delta-t)^\frac{-m}{1+m}$ for $v^{T-\delta}$
and $\alpha(t)=(T+\delta-t)^\frac{-m}{1+m}$ for $v^{T+\delta}$. 
Let $\mu>\delta'>0$ be 
a constant to be determined later. Since
\begin{equation*}
\int_{0}^{\infty}f(r)\,dr=\mu,
\end{equation*}
then there exists $R_0''\ge R_0$ such that
\begin{equation*}
\int_{0}^{\rho} f(r)\,dr \geq \mu-\delta' \qquad \rho\geq R_0''.
\end{equation*}
We now choose
\begin{equation*}
0<\delta'<\min\left(\mu,\frac{\mu\delta}{T-\delta}\right).
\end{equation*}
Then $(T-\delta-t)(\mu-\delta')\geq (T-2\delta-t)\mu$ holds for any
$0<t\le T-2\delta$. Hence
\begin{align}\label{eq-average-compare-2}
\frac{1}{2}\int_0^r\left[\int_{-\rho}^{\rho}v^{T-\delta}(x,t)
\,dx\right]d\rho &\geq (T-\delta-t)^{1+\frac{m}{1+m}}\int_{R_0''}^{\alpha(t)r}
\left[\int_{0}^{\rho}f(r)\,dr\right]d\rho\nonumber\\
&\geq  (T-\delta-t)^{1+\frac{m}{1+m}}\left(\mu-\delta'\right)
\left(\alpha(t)r-R_0''\right)\nonumber\\
&\geq \mu(T-2\delta-t)\left(r-(T-\delta-t)^{\frac{m}{1+m}}R_0''\right)  
\end{align}
holds for any $0\le t\le T-2\delta$ and $r\geq \delta^{\frac{m}{1+m}}R_0''$.
Then by \eqref{eq-average-compare-1} and \eqref{eq-average-compare-2},
\begin{equation}\label{eq-lowerbound-integral-u^R-1}
\frac{1}{2}\int_0^r\left[\int_{-\rho}^{\rho}u^R(x,t)\,dx\right]
d\rho \geq \mu(T-t-2\delta)\left(r-(T-t-\delta)^{\frac{m}{1+m}}
R_0''\right)-L_{\delta}
\end{equation}
holds for any $0\le t\le T-2\delta$ and $r\geq\delta^{\frac{m}{1+m}}R_0''$.

\vspace{6pt}

\noindent\textbf{Claim:} Given any $0<\delta<T/3$ the sequence $\{R_k\}$ 
has a subsequence still denoted by $\{R_k\}$ such that as $k\to\infty$ and $u^{R_k}$ will converge uniformly on every compact subset of 
$\R\times(0,T-3\delta]$ to a solution $u^{\delta}$ of 
\eqref{main-very-fast-diffusion-1} in $\R\times(0,T-3\delta)$ that satisfies
\eqref{eq-thm-condition-1} for any $0\le t\le T-3\delta$.

\indent To prove the claim, we first observe that there exists 
$x_0\in\R$ such that
\begin{equation}\label{eq-limsup-nonzero-1}
\limsup_{R_k\to \infty}u^{R_k}(x_0, T-(5/2)\delta)>0.
\end{equation}
Indeed, if $\limsup_{k\to\infty}u^{R_k}(x, T-(5/2)\delta)=0$ for 
all $x\in\R$, then by the Lebesque Dominated Convergence Theorem,
\begin{equation*}
\limsup_{k\to\infty}\frac{1}{2}\int_0^r\left[\int_{-\rho}^{\rho}u^{R_k}
(x,T-(5/2)\delta)\,dx\right]d\rho=0\quad\forall r>0
\end{equation*}
which contradicts \eqref{eq-lowerbound-integral-u^R-1} since the right
hand side of \eqref{eq-lowerbound-integral-u^R-1} is strictly positive
for $0\le t\le T-(5/2)\delta$ and $r$ sufficiently large. Hence \eqref{eq-limsup-nonzero-1} holds for some $x_0\in\R$. It then follows from \eqref{eq-limsup-nonzero-1} that there exists $x_0\in\R$, 
a subsequence of $\{u^{R_k}\}$ which we still denoted by $\{u^{R_k}\}$,
and a constant $c>0$ such that
\begin{equation}\label{eq-point-lowerbound-1}
u^{R_k}(x_0,T-(5/2)\delta)\geq c_0>0 \qquad \forall k\in\Z^+
\end{equation}
for some constant $c_0>0$.
For any $r_0>0$ and $s_0\in(0,T-3\delta)$, let $K(r_0,s_0)=\overline{I_{r_0}(x_0)}
\times[s_0,T-3\delta]$. Since $u_{\3}^R$ satisfies the Aronson-Benilan 
inequality \eqref{Aronson-Benilan inequality}, by Lemma 3.2 of \cite{Hu4} 
and an argument similar to the proof of Lemma 2.8 of \cite{Hu1}
we have the following Harnack type estimate.
For any $r_0>0$, $\delta_1>0$, and $s_0\in(0,T-3\delta)$, there exist 
constants $C_1>0$ and $C_2>0$ depending on $m$, $T$, $\delta$, $\delta_1$
and $\|u_0\|_{L^{\infty}}$ such that
\begin{equation}\label{eq-harnack-type-1}
u_{\3}^R(y,t)\geq (C_1(u_{\3}^R)^m(x_0,T-(5/2)\delta)+C_2)^{\frac{1}{m}}
\end{equation} 
holds for any $(y,t)\in K(r_0,s_0)$ and $R\ge r_0+\delta_1$. Letting
$\3\to 0$ in \eqref{eq-harnack-type-1},
\begin{equation}\label{eq-harnack-type-2}
u^R(y,t)\geq (C_1(u^R)^m(x_0,T-(5/2)\delta)+C_2)^{\frac{1}{m}}
\end{equation} 
holds for any $(y,t)\in K(r_0,s_0)$ and $R\ge r_0+\delta_1$.
By \eqref{eq-point-lowerbound-1} and \eqref{eq-harnack-type-2},
\begin{equation}\label{eq-K-lowerbound-1}
u^{R_k}(y,\tau)\geq c(K(r_0,s_0))>0 \qquad \forall R_k\ge r_0+\delta_1,
(y,t)\in K(r_0,s_0)
\end{equation}
for some constant $c(K(r_0,s_0))$. Hence the sequence $\{u^{R_k}\}$
is uniformly bounded below by some positive constant on any compact 
subset of $\R\times (0,T-3\delta]$ for all $k$ sufficiently large. 
Since the sequence $\{u^{R_k}\}$ is uniformly bounded from above by 
$\|u_0\|_{\infty}$, by the Schauder estimates for parabolic equations 
\cite{LSU}  the sequence $\{u^{R_k}\}$ is equi-H\"older continuous on 
every compact subsets of $\R\times(0,T-3\delta]$. Hence by the Ascoli 
Theorem and a diagonalization argument there exists a subsequence we will still denoted by $\{u^{R_k}\}$ 
that converges uniformly on every compact subsets of $\R\times(0,T-3\delta]$ 
to a solution $u^{\delta}$ of \eqref{main-very-fast-diffusion-1} 
in $\R\times(0,T-3\delta]$. 

It remains to show that 
\begin{equation*}
u^{\delta}(\cdot,t)\to u_0\qquad\mbox{in $L^1(\R)$}\quad 
\mbox{as $t \to 0$}. 
\end{equation*}
Since $u_{\3}^R$ satisfies \eqref{Aronson-Benilan inequality}, 
$u^R$ satisfies \eqref{Aronson-Benilan inequality}. 
By \eqref{Aronson-Benilan inequality} for $u^{R_k}$ and
\eqref{eq-point-lowerbound-1},
\begin{equation}\label{eq-aronson-benilan-estimate-1}
u^{R_k}(x,t)\ge\frac{t^{\frac{1}{1-m}}}{(T-(5/2)\delta)^{\frac{1}{1-m}}}
u^{R_k}(x,T-(5/2)\delta)
\ge c_0\frac{t^{\frac{1}{1-m}}}{(T-(5/2)\delta)^{\frac{1}{1-m}}}
\end{equation}
holds for any $|x|\le R_k$, $0\leq t\le T-3\delta$ and $k\in\Z^+$.
Letting $k\to\infty$ in \eqref{eq-aronson-benilan-estimate-1},
\begin{equation}\label{eq-aronson-benilan-estimate}
u^{\delta}(x,t)
\ge c_0\frac{t^{\frac{1}{1-m}}}{(T-(5/2)\delta)^{\frac{1}{1-m}}}
\quad\forall x\in\R,0\le t\le T-3\delta.
\end{equation}
Thus for any $\psi \in C^{\infty}_0(\R)$, 
by \eqref{eq-aronson-benilan-estimate},
\begin{equation*}
\begin{aligned}
\left|\int_{\R}u^{\delta}(x,t)\psi(x)\,dx-\int_{\R}u_0(x)\psi(x)
\,dx\right|&=\left|\int_0^t\int_{\R}(u^{\delta})_t(x,s)\psi(x)\,dxds
\right|\\
&=\left|\int_0^t\int_{\R}\left(\frac{(u^{\delta})^m(x,s)}{m}\right)_{xx}
\psi(x)\,dxds\right|\\
&=\left|\int_0^t\int_{\R}\frac{(u^{\delta})^m(x,s)}{m}\psi_{xx}(x)\,dxds
\right|\\
&\leq \int_0^t\int_{\R}\left|\frac{(u^{\delta})^m(x,s)}{m}\right|\left|
\psi_{xx}(x)\right|\,dxds\\
&\leq C\int_0^ts^{\frac{m}{1-m}}\,ds\\
&=C(1-m)t^{\frac{1}{1-m}}\\
&\to 0 \qquad \mbox{as $t\to 0$}
\end{aligned}
\end{equation*}  
Hence $u^{\delta}(\cdot,t)\to u_0$ weakly in $L^1(\R)$ as $t\to 0$.
Then any sequence $\{t_i\}$, $t_i\to 0$ as $i\to 0$, has a subsequence
which we still denote by $\{t_i\}$ such that $u^{\delta}(\cdot,t_i)\to u_0$
a.e. in $\R$ as $i\to\infty$. 

Let $\phi (x):=(\mu_0 |m|(|x|-R_0))^{\frac{1}{m}}$. We claim that 
\begin{equation}\label{eq-claim-2}
u^{\delta}(x,t)\le \phi (x)=(\mu_0 |m|(|x|-R_0))^{\frac{1}{m}}
\quad\forall |x|\ge R_0, \,\, 0<t\leq T-3\delta. 
\end{equation}
Suppose the claim holds. Since $\phi (x)\in L^1((-\infty,-2R_0)\cup 
(2R_0,\infty))$ and $u^{\delta}\le\|u_0\|_{L^{\infty}}$, by the 
Lebesgue dominated convergence theorem $u^{\delta}(\cdot,t_i)\to u_0$ in 
$L^1(\R)$ as $i\to\infty$. Since the sequence $\{t_i\}$ is arbitrary, 
$u^{\delta}(\cdot,t)\to u_0$ in $L^1(\R)$ as $t\to 0$. Hence $u^{\delta}$
is a solution of \eqref{eq-Cauchy-problem} in $\R\times (0,T-3\delta)$.

We will now prove the above claim. Let $R>R_0$ and 
$$
0<\delta_1<\min((R-R_0)/2,(\|u_0\|_{L^{\infty}}+1)^m/(\mu_0 |m|)).
$$ 
Then 
$$
\phi(\pm (R_0+\delta_1))\ge \|u_0\|_{L^{\infty}}+1\ge u_{\3}^R
(\pm (R_0+\delta_1))\quad\mbox{ and }\quad\phi(\pm R)\ge u_{\3}^R(\pm R)
$$
for any $0<\3<1$. Hence by \eqref{eq-condition-u_0} and an argument 
similar to the proof of Lemma 2.3 of \cite{DK} and Lemma 2.5 of 
\cite{Hu3}, for any $0<\3<1$, 
\begin{align}\label{eq-bound-infty}
\int_{R_0+\delta_1\le |x|\le R}(u_{\3}^R(x,t)-\phi(x))_+\,dx
\le&\int_{R_0+\delta_1\le |x|\le R}(u_{\3}^R(x,t_1)-\phi(x))_+\,dx
\quad\forall t>t_1>0\nonumber\\
\to&\int_{R_0+\delta_1\le |x|\le R}(\3+u_0-\phi)_+\,dx
\quad\mbox{ as }t_1\to 0\nonumber\\
\le&2\3 (R-R_0-\delta_1)\qquad\qquad\forall t>0.
\end{align} 
Letting $\3\to 0$ and $\delta_1\to 0$ in \eqref{eq-bound-infty},
\begin{equation*}
\int_{R_0\le |x|\le R}(u^R(x,t)-\phi(x))_+\,dx\le 0\quad\forall
t>0.
\end{equation*}
Hence
\begin{equation}\label{eq-u^R-bound}
u^R(x,t)\le\phi(x)=(\mu |m|(|x|-R_0))^{\frac{1}{m}}
\quad\forall R_0\le |x|\le R, t>0.
\end{equation}
Putting $R=R_k$ in \eqref{eq-u^R-bound} and letting $k\to\infty$ we get
\eqref{eq-claim-2} and the claim follows.

We will now prove the right hand side of \eqref{eq-lower-upper-general-1}. 
Let 
\begin{equation*}
Z(x,t)=G_R^{\ast}\left(u^{R_k}_{\3}(\cdot,t)-v^{T+\delta}(\cdot,t)
\right)(x).
\end{equation*}
Then $Z(x,t)$ satisfies the equation $Z_t=d(x,t)Z_{xx}-e(t)$ with
\begin{equation*}
d(x,t)=\frac{(u_{\3}^{R_k})^m-(v^{T+\delta})^m}
{m(u_{\3}^{R_k}-v^{T+\delta})}(x,t)
\end{equation*}
and 
\begin{equation*}
e(t)=\frac{(u_{\3}^{R_k})^{m}(0,t)-(v^{T+\delta})^m(0,t)}{m}.
\end{equation*}
Since $u_{\3}^{R_k}\ge u^{R_k}$, by \eqref{eq-aronson-benilan-estimate-1}, 
\begin{equation*}
e(t)\geq -\frac{(u_{\3}^{R_k})^{m}(0,t)}{|m|}\geq 
-\frac{c_0^mt^{\frac{m}{1-m}}}{|m|(T-\frac{5\delta}{2})^{\frac{m}{1-m}}}
=-Dt^{\frac{m}{1-m}}\quad\forall 0\le t\le T-3\delta
\end{equation*}
where $D=c_0^m/(|m|(T-\frac{5\delta}{2})^{\frac{m}{1-m}})$.
Therefore, if we set $\4{Z}=Z-D\int_{0}^{t}s^{\frac{m}{1-m}}\,ds$, then 
$\4{Z}(x,t)$ satisfies
\begin{equation*}
\4{Z}_t\leq d(x,t)\4{Z}_{xx}\quad\forall |x|\le R_k, 0\le t
\le T-3\delta.
\end{equation*}
At $t=0$ we have $\4{Z}(x,0)=Z(x,0)\leq l_{\delta}$. Now
\begin{equation}\label{eq-tilde-Z-time-derivative-1}
\4{Z}_t=\frac{(u_{\3}^{R_k})(x,t)-(v^{T+\delta})^m(x,t)}{m}-e(t)
-Dt^{\frac{m}{1-m}}
\le\frac{(v^{T+\delta})^m(x,t)-(u_{\3}^{R_k})(x,t)}{|m|}.
\end{equation}
By Corollary \ref{cor-limit-condition-v-1} there exist constants $a>0$
and $r_0>a/(\mu |m|)$ such that
\begin{equation*}
v^{T+\delta}(x,t)\ge (\mu |m||x|+a(T+\delta-t)^{\frac{m}{1+m}}
)^{\frac{1}{m}}
\end{equation*}
holds for any $|x|\ge r_0(T+\delta-t)^{\frac{m}{1+m}}$ and $0<t<T+\delta$.
Hence 
\begin{equation}\label{eq-v-u-boundary-compare-1}
(v^{T+\delta})^m(x,t)\le\mu |m||x|+a(T+\delta-t)^{\frac{m}{1+m}}
\end{equation}
for any $|x|\ge r_0(4\delta)^{\frac{m}{1+m}}$ and $0<t\le T-3\delta$.
By passing to a subsequence if necessary we may assume without loss of
generality that $R_k\ge r_0(4\delta)^{\frac{m}{1+m}}$ for all $k\in\Z^+$.
Then by \eqref{eq-v-u-boundary-compare-1},
\begin{equation}\label{eq-v-u-boundary-compare-2}
((v^{T+\delta})^m-u^m)(\pm R_k,t)\le a(T+\delta)^{\frac{m}{1+m}}
\quad\forall 0\le t\le T-3\delta.
\end{equation}
By \eqref{eq-tilde-Z-time-derivative-1} and \eqref{eq-v-u-boundary-compare-2},
\begin{equation*}
\4{Z}_t(\pm R_k,t)\le a(T+\delta)^{\frac{m}{1+m}}
\quad\forall 0\le t\le T-3\delta.
\end{equation*}
Let $\hat{Z}=\4{Z}-a(T+\delta)^{\frac{m}{1+m}}t$. Then
$\hat{Z}(x,t)$ satisfies
\begin{equation*}
\hat{Z}_t\leq d(x,t)\hat{Z}_{xx}\quad\forall |x|\le R_k, 0\le t
\le T-3\delta,
\end{equation*}
\begin{equation*}
\hat{Z}(x,0)=\4{Z}(x,0)\le l_{\delta}\quad\forall |x|\le R_k,
\end{equation*}
and
\begin{align*}
&\hat{Z}_t(\pm R_k,t)\le 0\quad\forall 0\le t\le T-3\delta\\
\Rightarrow\quad&\hat{Z}(\pm R_k,t)\le \hat{Z}(\pm R_k,0)\le l_{\delta}
\quad\forall 0\le t\le T-3\delta.
\end{align*}
Then by the maximum principle $\hat{Z}\leq l_{\delta}$ in $(-R_j,R_j)\times
(0,T-3\delta)$, which implies the right hand side 
\eqref{eq-lower-upper-general-1} with 
$$
L_{\delta}=l_{\delta}+\max(BT,(1-m)DT^{\frac{1}{1-m}}
+a(T+\delta)^{\frac{m}{1+m}}T).
$$
Now by putting $x=\pm r$, $r>0$, into the right hand side of  
\eqref{eq-lower-upper-general-1} and averaging we get after simplifying
as before that
\begin{equation*}
\frac{1}{2}\int_0^r\left[\int_{-\rho}^{\rho}u^{R_k}(x,t)\,dx\right]
d\rho\le\frac{1}{2}\int_0^r\left[\int_{-\rho}^{\rho}v^{T-\delta}
(x,t)\,dx\right]d\rho+L_{\delta}
\end{equation*}
holds for any $0<r<R_k$, $0<t\leq T-3\delta$ and $k\in\Z^+$.
Letting $k\to\infty$,
\begin{equation}\label{eq-u-mass-inequality-1}
\frac{1}{2}\int_0^r\left[\int_{-\rho}^{\rho}u^{\delta}(x,t)\,dx\right]
d\rho\le\frac{1}{2}\int_0^r\left[\int_{-\rho}^{\rho}v^{T-\delta}
(x,t)\,dx\right]d\rho+L_{\delta}
\end{equation}
holds for any $r>0$ and $0<t\leq T-3\delta$. By \eqref{eq-special-soln-mass},
\begin{equation}\label{eq-v-mass-inequality-1}
\frac{1}{2}\int_0^r\left[\int_{-\rho}^{\rho}v^{T+\delta}(x,t)
\,dx\right]d\rho\le (T+\delta-t)\mu r.
\end{equation}
By \eqref{eq-u-mass-inequality-1} and \eqref{eq-v-mass-inequality-1},
\begin{equation}\label{eq-upper-integral-u^R} 
\frac{1}{2}\int_0^r\left[\int_{-\rho}^{\rho}u^{\delta}(x,t)\,dx\right]
d\rho\le (T+\delta-t)\mu r+L_{\delta}
\end{equation}
holds for any $r>0$ and $0<t\leq T-3\delta$.
By \eqref{eq-lowerbound-integral-u^R-1} and \eqref{eq-upper-integral-u^R}    
the solution $u^{\delta}$ satisfies
\begin{equation}\label{eq-upper-lower-last-1}
-\frac{L_{\delta}}{r}+\mu(T-t-2\delta)\left(1-\frac{a_0}{r}\right) \leq 
\frac{1}{2r}\int_0^r\left[\int_{-\rho}^{\rho}u^{\delta}(x,t)\,dx\right]
d\rho \leq \mu(T-t+2\delta)+\frac{L_{\delta}}{r}
\end{equation}
for all $r\ge\delta^{\frac{m}{1+m}}R_0'$ and $0\leq t \leq T-3\delta$
where $a_0=(2\delta)^{\frac{m}{1+m}}R_0''$. Now for any bounded non-negative 
integrable function $h$ on $\R$, we have
\begin{equation*}
A_r:=\frac{1}{2r}\int_0^r\left[\int_{-\rho}^{\rho}h\,dx\right]d\rho \leq 
\frac{1}{2}\|h\|_{L^1} \qquad \forall r>0
\end{equation*}
and 
\begin{equation*}
\frac{1}{2r}\int_0^r\left[\int_{-\rho}^{\rho}h\,dx\right]d\rho \geq 
\frac{r-R_1}{2r}\int_{-R_1}^{R_1}h\,dx \qquad \forall r\geq R_1>0.
\end{equation*}
Then
\begin{equation*}
\frac{1}{2}\int_{-R_1}^{R_1}h\,dx \leq \liminf_{r\to\infty} A_r \leq 
\limsup_{r\to\infty} A_r \leq \frac{1}{2}\|h\|_{L^1} \qquad \forall R_1>0.
\end{equation*}
Letting $R_1\to\infty$,
\begin{equation*}
\lim_{r\to\infty}\frac{1}{2r}\int_0^r\left[\int_{-\rho}^{\rho}h\,dx\right]
d\rho = \frac{1}{2}\int_{\R}h(x)\,dx \qquad \forall 0\leq h \in L^1(\R).
\end{equation*}
Since $u^{\delta}\le\|u_0\|_{L^{\infty}}$, letting $r\to\infty$ in
\eqref{eq-upper-lower-last-1} by \eqref{eq-claim-2} and the Lebesgue 
dominated convergence theorem,
\begin{equation}\label{eq-lower-upper-final-1}
\mu(T-t-2\delta)\leq \frac{1}{2}\int_{\R}u^{\delta}(x,t)\,dx\leq 
\mu(T-t+2\delta).
\end{equation}
It remains to construct a solution $u$ of the problem 
\eqref{eq-Cauchy-problem} which is defined up to time $T$ and satisfies 
\eqref{eq-thm-condition-1}. Let $\{\delta_k\}_{k=1}^{\infty}$ 
be a decreasing sequence 
of positive numbers such that $\delta_k\to 0$. By the previous argument
there exists a subsequence $\{R_k^1\}$ of $\{R_k\}$ such that 
$u^{R_k^1}$ converges to a solution $u^{\delta_1}$ of 
\eqref{eq-Cauchy-problem} uniformly on every compact subsets of $\R\times 
(0,T-3\delta_1]$ as $k\to\infty$. 

We construct $u^{\delta_k}$ inductively. For any $j\ge 1$ suppose 
$\{R_k^j\}_{k=1}^{\infty}$ is a subsequence of 
$\{R_k^{j-1}\}_{k=1}^{\infty}$ such that $u^{R_k^j}$
converges to a solution $u^{\delta_j}$ of \eqref{eq-Cauchy-problem} in $\R\times(0,T-3\delta_j)$   
uniformly on every compact subsets of $\R\times (0,T-3\delta_j]$ 
as $k\to\infty$. By repeating the above argument the sequence 
$\{R_k^j\}_{k=1}^{\infty}$ has a subsequence 
$\{R_k^{j+1}\}_{k=1}^{\infty}$ such that $u^{R_k^{j+1}}$ 
converges uniformly to some solution $u^{\delta_{j+1}}$ 
of \eqref{eq-Cauchy-problem} in $\R\times(0,T-3\delta_{j+1})$ on every compact subsets of $\R\times(0,T-3\delta_{j+1}]$ 
as $k\to\infty$. 

By construction we have $u^{\delta_j}=u^{\delta_{j-1}}$ on 
$\R\times(0,T-3\delta_j]$ for any $j\ge 1$. Hence
if we define $u(x,t)=u^{\delta_j}(x,t)$ for any $x\in\R$, 
$0<t\leq T-3\delta_j$, and $j\ge 1$, then $u$ satisfies 
\eqref{eq-Cauchy-problem} on $\R\times(0,T)$. Putting $\delta=\delta_j$ in 
\eqref{eq-lower-upper-final-1} and letting $j\to\infty$ we get that 
$u$ satisfies \eqref{eq-thm-condition-1} and the theorem follows.

\end{proof}

By the construction of solution of \eqref{eq-Cauchy-problem} in 
Theorem \ref{thm-sequential-convergence-problem} we have the following 
two corollaries.

\begin{cor}
For any $\mu_2>\mu_1>0$ and $0\le u_0\in L^{\infty}(\R)$ such that
\eqref{eq-condition-u_0} holds for some constants $R_0>1$ and $0<\mu_0\leq \mu_1$
with $\mu=\mu_2$ if $u_{\mu_1}$ and 
$u_{\mu_2}$ are the solutions of \eqref{eq-Cauchy-problem} in $\R\times (0,T_{\mu_1})$
and $\R\times (0,T_{\mu_2})$ resepctively given by 
Theorem \ref{thm-sequential-convergence-problem} which satisfies \eqref{eq-thm-condition-1} with
$\mu=\mu_1,\mu_2$ in $\R\times (0,T_{\mu_1})$ and $\R\times (0,T_{\mu_2})$ 
resepctively where $T_{\mu_i}$ is given by \eqref{eq-mu-time} with $\mu=\mu_1,\mu_2$
respectively, then $u_{\mu_2}\le u_{\mu_1}$ in $\R\times (0,T_{\mu_2})$.  
\end{cor}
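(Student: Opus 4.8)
The plan is to derive the inequality first for the regularized Dirichlet problems on the bounded intervals $I_R$, where the classical comparison principle for \eqref{main-very-fast-diffusion-1} is available, and then to pass to the limit $R\to\infty$ along a common sequence of radii.

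First I would fix $\3\in(0,1)$ and $R\ge R_0$. Recall from the paragraph preceding Theorem \ref{thm-convergence-Dirichlet-problem} that $u_{\3}^{R,\mu_1}$ and $u_{\3}^{R,\mu_2}$ are the unique positive classical solutions of \eqref{problem-Dirichlet} with the common initial datum $u_0+\3$ and boundary values $(\mu_1|m|R)^{\frac{1}{m}}$ and $(\mu_2|m|R)^{\frac{1}{m}}$ respectively, and that on each slab $I_R\times(0,T)$ each of them is bounded above and bounded away from $0$. Since $-1<m<0$, the map $s\mapsto s^{\frac1m}$ is strictly decreasing on $(0,\infty)$, so $\mu_2>\mu_1$ forces
\begin{equation*}
(\mu_2|m|R)^{\frac{1}{m}}<(\mu_1|m|R)^{\frac{1}{m}}\qquad\mbox{on }\{\pm R\}\times(0,\infty),
\end{equation*}
and hence $u_{\3}^{R,\mu_2}\le u_{\3}^{R,\mu_1}$ on the parabolic boundary of $I_R\times(0,\infty)$. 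Writing the difference $w=u_{\3}^{R,\mu_2}-u_{\3}^{R,\mu_1}$ in the form $w_t=a(x,t)w_{xx}$ with $a>0$ (exactly as the coefficient $a(x,t)$ is produced in the proof of Theorem \ref{thm-sequential-convergence-problem}), the comparison principle gives
\begin{equation}\label{eq-cor-comparison-eps}
u_{\3}^{R,\mu_2}\le u_{\3}^{R,\mu_1}\qquad\mbox{in }I_R\times(0,\infty),
\end{equation}
and letting $\3\to0$ in \eqref{eq-cor-comparison-eps} yields $u^{R,\mu_2}\le u^{R,\mu_1}$ in $I_R\times(0,\infty)$ for every $R\ge R_0$.

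To conclude I would pass to the limit $R\to\infty$. Because $0<\mu_0\le\mu_1<\mu_2$, hypothesis \eqref{eq-condition-u_0} holds with $\mu=\mu_1$ and with $\mu=\mu_2$, so Theorem \ref{thm-sequential-convergence-problem} applies in each case. Running both constructions along a single sequence of radii $R_k\to\infty$ (obtained by a diagonal extraction: first a subsequence along which $u^{R_k,\mu_2}\to u_{\mu_2}$, then a further subsequence along which $u^{R_k,\mu_1}\to u_{\mu_1}$), the convergence is uniform on compact subsets of $\R\times(0,T_{\mu_1})$ and of $\R\times(0,T_{\mu_2})$ respectively. By \eqref{eq-mu-time} one has $T_{\mu_2}=\frac{1}{2\mu_2}\int_{\R}u_0\,dx<\frac{1}{2\mu_1}\int_{\R}u_0\,dx=T_{\mu_1}$, so both limits are defined on $\R\times(0,T_{\mu_2})$; for a fixed compact set $K\subset\R\times(0,T_{\mu_2})$ and all $k$ large enough that $K\subset I_{R_k}\times(0,\infty)$, \eqref{eq-cor-comparison-eps} gives $u^{R_k,\mu_2}\le u^{R_k,\mu_1}$ on $K$, and passing to the limit yields $u_{\mu_2}\le u_{\mu_1}$ on $K$, hence on all of $\R\times(0,T_{\mu_2})$.

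I expect no serious obstacle here: once \eqref{eq-cor-comparison-eps} is in place, everything is bookkeeping. The only point requiring a little care is that Theorem \ref{thm-sequential-convergence-problem} delivers $u_{\mu_1}$ and $u_{\mu_2}$ only along (a priori distinct) subsequences, so one must arrange both convergences along one and the same sequence of radii before letting $k\to\infty$; with that done, the monotonicity in $\mu$ is inherited automatically in the limit. (Once the full convergence statement of Theorem \ref{thm-convergence-Dirichlet-problem} is available, the limits are independent of the subsequence and this last step becomes immediate.)
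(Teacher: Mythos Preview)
Your argument is correct and is exactly what the paper intends when it says the corollary follows ``by the construction of solution of \eqref{eq-Cauchy-problem} in Theorem \ref{thm-sequential-convergence-problem}'': order the regularized Dirichlet approximants $u_\3^{R,\mu}$ via the maximum principle (boundary values decrease in $\mu$ since $m<0$), let $\3\to 0$, and pass to the limit along a common subsequence of radii. One small inaccuracy worth fixing: the raw difference $w=u_\3^{R,\mu_2}-u_\3^{R,\mu_1}$ does \emph{not} satisfy an equation of the clean form $w_t=a(x,t)w_{xx}$---in the proof of Theorem \ref{thm-sequential-convergence-problem} that structure appears only for $G_R^{\ast}$ applied to the difference---so instead just invoke the standard comparison principle for classical solutions of the uniformly parabolic equation \eqref{main-very-fast-diffusion-1} on $\overline{I_R}\times[0,\infty)$ (both $u_\3^{R,\mu_i}$ are smooth, bounded, and bounded away from zero), or the $L^1$-contraction argument the paper uses repeatedly (cf.\ Lemma 2.3 of \cite{DK}, Lemma 2.5 of \cite{Hu3}); either route yields \eqref{eq-cor-comparison-eps} and the rest of your proof goes through unchanged.
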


\begin{cor}[cf. \cite{ERV}]\label{cor-global-soln}
Let $0\le u_0\in L^{\infty}(\R)$ be an even function such that 
\eqref{eq-condition-u_0} holds for some constant $R_0>1$
and $\mu_0>0$ For any $\mu\ge\mu'>0$ let $u_{\mu'}$ be the solution of 
\eqref{eq-Cauchy-problem} in $\R\times (0,T_{\mu'})$ given by 
Theorem \ref{thm-sequential-convergence-problem} 
which satisfies \eqref{eq-thm-condition-1} where $T_{\mu'}$ is given by \eqref{eq-mu-time} with $\mu$
being replaced by $\mu'$. Then $u_{\mu'}$
will increase and converge to the global solution $u$ of \eqref{eq-Cauchy-problem} 
in $\R\times (0,\infty)$ which satisfies
\begin{equation*}
\int_{\R}u(x,t)\,dx=\int_{\R}u_0(x)\,dx\quad\forall t>0.
\end{equation*}
as $\mu'\to 0$.
\end{cor}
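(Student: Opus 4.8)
The plan is to define $u(x,t):=\lim_{\mu'\to 0}u_{\mu'}(x,t)$ and to show that this monotone limit is the required global solution. I first establish monotonicity and that the limit exists. Given $0<\mu_1'<\mu_2'\le\mu$, the preceding Corollary applies once the constant $\mu_0$ in \eqref{eq-condition-u_0} is replaced by $\min(\mu_0,\mu_1')$ — which is harmless, since $1/m<0$ means that \eqref{eq-condition-u_0} with $\mu_0$ implies \eqref{eq-condition-u_0} with every smaller positive constant — and it gives $u_{\mu_2'}\le u_{\mu_1'}$ on $\R\times(0,T_{\mu_2'})$. Because $T_{\mu'}=\frac{1}{2\mu'}\int_\R u_0\,dx\to\infty$ as $\mu'\to 0$, for each $(x,t)\in\R\times(0,\infty)$ the quantity $u_{\mu'}(x,t)$ is defined for all sufficiently small $\mu'$ and is nondecreasing as $\mu'\searrow 0$; combined with the bound $u_{\mu'}\le\|u_0\|_{L^{\infty}}$ from the proof of Theorem \ref{thm-sequential-convergence-problem}, this shows $u$ exists with $0\le u\le\|u_0\|_{L^{\infty}}$ on $\R\times(0,\infty)$.

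Next I show that $u$ is a classical solution of \eqref{main-very-fast-diffusion-1} on $\R\times(0,\infty)$. Fix a compact set $K\subset\R\times(0,\infty)$ and choose $\mu_1'\in(0,\mu)$ small enough that $T_{\mu_1'}$ exceeds the supremum of the time coordinate on $K$, so that $K\subset\R\times(0,T_{\mu_1'})$ and every $u_{\mu'}$ with $0<\mu'\le\mu_1'$ is defined there. On a compact neighbourhood of $K$ inside $\R\times(0,T_{\mu_1'})$ one has $u_{\mu_1'}\le u_{\mu'}\le\|u_0\|_{L^{\infty}}$ for all such $\mu'$, while $u_{\mu_1'}$, being a positive classical solution, is bounded below there by some constant $c>0$; thus \eqref{main-very-fast-diffusion-1} is uniformly parabolic with uniform coefficient bounds, and by the interior parabolic Schauder estimates \cite{LSU} (as used in the proof of Theorem \ref{thm-sequential-convergence-problem}) the family $\{u_{\mu'}\}_{0<\mu'\le\mu_1'}$ is equicontinuous, indeed equi-$C^{2,1}$, on compact subsets of the interior. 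Hence the monotone limit $u$ is a classical solution of \eqref{main-very-fast-diffusion-1} on $\R\times(0,\infty)$ and $u_{\mu'}\to u$ locally in $C^{2,1}$.

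Finally I verify the mass identity and the initial datum, the latter being the step I expect to require the most care. By \eqref{eq-thm-condition-1}, $\int_\R u_{\mu'}(x,t)\,dx=\int_\R u_0\,dx-2\mu' t$ for $0<t<T_{\mu'}$; fixing $t>0$ and letting $\mu'\to 0$ (so that eventually $T_{\mu'}>t$), the monotone convergence theorem gives $\int_\R u(x,t)\,dx=\int_\R u_0\,dx$ for every $t>0$, so $u(\cdot,t)\in L^1(\R)$ and, being $\ge u_{\mu'}>0$ for any admissible $\mu'$, $u>0$ throughout. For the initial datum, the inequality $u\ge u_{\mu'}$ gives the pointwise bound $(u_0-u(\cdot,t))_+\le(u_0-u_{\mu'}(\cdot,t))_+$, so with the $L^1$ convergence $u_{\mu'}(\cdot,t)\to u_0$ supplied by Theorem \ref{thm-sequential-convergence-problem},
$$\limsup_{t\to 0}\int_\R(u_0-u(x,t))_+\,dx\le\lim_{t\to 0}\|u_{\mu'}(\cdot,t)-u_0\|_{L^1(\R)}=0.$$
Since $\int_\R(u(x,t)-u_0)\,dx=0$ forces $\int_\R(u(x,t)-u_0)_+\,dx=\int_\R(u_0-u(x,t))_+\,dx$, it follows that $\|u(\cdot,t)-u_0\|_{L^1(\R)}=2\int_\R(u_0-u(x,t))_+\,dx\to 0$ as $t\to 0$. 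Hence $u$ solves \eqref{eq-Cauchy-problem} on $\R\times(0,\infty)$, satisfies $\int_\R u(x,t)\,dx=\int_\R u_0\,dx$ for all $t>0$, and $u_{\mu'}\nearrow u$ as $\mu'\to 0$, as claimed. The chief obstacle is the uniform strictly positive lower bound needed to pass the degenerate equation to the limit, which I resolve by comparison with the single fixed classical solution $u_{\mu_1'}$.
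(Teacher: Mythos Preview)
Your argument is correct. The paper gives no explicit proof of this corollary: it merely states that it (together with the preceding corollary on monotonicity) follows ``by the construction of solution of \eqref{eq-Cauchy-problem} in Theorem \ref{thm-sequential-convergence-problem}'' and cites \cite{ERV}. Your write-up is the natural expansion of that remark --- you invoke Corollary~3.3 for monotonicity (correctly noting that \eqref{eq-condition-u_0} persists under lowering $\mu_0$), use a fixed $u_{\mu_1'}$ as a positive lower barrier to make the equation uniformly parabolic and apply Schauder exactly as in the proof of Theorem~\ref{thm-sequential-convergence-problem}, obtain mass conservation by monotone convergence, and recover the initial datum via the clean observation that $\int_\R(u(\cdot,t)-u_0)=0$ forces $\|u(\cdot,t)-u_0\|_{L^1}=2\int_\R(u_0-u(\cdot,t))_+$, which is dominated by $\|u_{\mu'}(\cdot,t)-u_0\|_{L^1}\to 0$. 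This is in fact more detail than the paper provides; the evenness of $u_0$ assumed in the statement is nowhere used in your argument.
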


\section{Uniqueness of solution}
\setcounter{equation}{0}
\setcounter{thm}{0}

In this section we will use a modification of the technique of \cite{Hs2}
to prove that the solution of \eqref{eq-Cauchy-problem} constructed in 
section three by the 
Dirichlet approximation and the solution of \eqref{eq-Cauchy-problem} 
constructed in \cite{Hu3} by the Neumann approximation are equal. 
We will also prove the convergence of solutions of \eqref{problem-Dirichlet}
and \eqref{problem-general-Dirichlet} as $R\to\infty$.

We first observe that by Theorem \ref{thm-sequential-convergence-problem},
Corollary \ref{cor-global-soln},
and an argument similar to the proof of Theorem 1.3 of \cite{DP} we 
have the following two results.

\begin{lem}\label{lemma-step-initial-1}
Suppose $0\le u_0\in L^{\infty}(\R)$ satisfies \eqref{eq-condition-u_0}
for some constants $\mu_0>0$, $R_0>1$, and 
$f=\sum_{i=1}^{i_0}\mu_i \chi_{I_i}$ is a step function on $[0,T_0)$ 
where $0=a_0<a_1<\cdots<a_{i_0}=T_0$ is a partition of the interval 
$[0,T_0]$, $I_i=[a_{i-1},a_i)$, $\mu_i\geq 0$ for all 
$i=1,2,\cdots,i_0$ such that 
\begin{equation*}
2\sum_{i=1}^{i_0}\mu_i(a_i-a_{i-1})\geq \int_{\R}u_0\,dx.
\end{equation*}
Let $T\in (0,T_0]$ be given by \eqref{eq-def-T-1} and 
$a_{j_0-1}<T\le a_{j_0}$ for some $j_0\in \{1,2,\dots,i_0\}$. 
Let $u_1$ be the solution of \eqref{main-very-fast-diffusion-1} in 
$\R\times(0,a_1)$ given by Theorem \ref{thm-sequential-convergence-problem} 
or Corollary \ref{cor-global-soln}
which satisfies \eqref{eq-thm-condition-1} with $f=\mu_1$ and 
$u_1(\cdot,t)\to u_0$ in $L^1(\R)$ as $t\to 0$. For each 
$i=2,3,\cdots,j_0-1$, let $u_i$ be the solution of
\eqref{main-very-fast-diffusion-1} in $\R\times(0,a_i-a_{i-1})$ 
given by Theorem \ref{thm-sequential-convergence-problem} 
or Corollary \ref{cor-global-soln} which satisfies 
\eqref{eq-thm-condition-1} with $f=\mu_i$,
$u_0=u_{i-1}$, 
and $u_i(\cdot,t)\to u_{i-1}(x,a_{i-1})$ in $L^1(\R)$ as $t\to 0$. 
Let $u_{j_0}$ be the solution of
\eqref{main-very-fast-diffusion-1} in $\R\times(0,T-a_{j_0-1})$ 
given by Theorem \ref{thm-sequential-convergence-problem} 
or Corollary \ref{cor-global-soln} which satisfies 
\eqref{eq-thm-condition-1} with $f=\mu_i$, $u_0=u_{j_0-1}$, 
and $u_{j_0}(\cdot,t)\to u_{j_0-1}(x,a_{{j_0}-1})$ in $L^1(\R)$ as $t\to 0$. 
Then the function $u$ defined by $u(x,t)=u_i(x,t-a_{i-1})$ for 
$x\in\R$, $t\in[a_{i-1},a_i)$, $i=1,2,\cdots,i_0$, is a solution 
of \eqref{eq-Cauchy-problem} in $\R\times(0,T)$ which
satisfies \eqref{main-integral-condition-1}. 
\end{lem}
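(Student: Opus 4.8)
The plan is to build the solution $u$ piece by piece on the successive time strips $[a_{i-1},a_i)$, using Theorem \ref{thm-sequential-convergence-problem} (or Corollary \ref{cor-global-soln} when the relevant mass stays positive for all time) to produce each block $u_i$, and then to check that the concatenation is genuinely a (classical, mass-controlled) solution of \eqref{eq-Cauchy-problem} on $\R\times(0,T)$. The two things that actually require work are: (a) verifying that each $u_i$ is defined on all of the strip we need it on, i.e.\ that the extinction time coming from \eqref{eq-mu-time} for the $i$-th block is at least $a_i-a_{i-1}$ for $i<j_0$ and at least $T-a_{j_0-1}$ for $i=j_0$; and (b) verifying that at each junction time $t=a_{i-1}$ the new block has the right initial trace and that the glued function is smooth across $t=a_{i-1}$, so that it is a classical solution of \eqref{main-very-fast-diffusion-1} in the interior.

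For (a): set $M_0=\int_\R u_0\,dx$ and $M_i=\int_\R u_i(x,a_i-a_{i-1})\,dx$ (with the obvious modification for the last block). By \eqref{eq-thm-condition-1} applied to the $i$-th block we have $M_i = M_{i-1}-2\mu_i(a_i-a_{i-1})$ for $i<j_0$, so inductively $M_i = M_0 - 2\sum_{l=1}^{i}\mu_l(a_l-a_{l-1})$. The extinction time $T_i$ for the $i$-th block, given by \eqref{eq-mu-time}, is $T_i = M_{i-1}/(2\mu_i)$, and $M_{i-1}\ge 2\mu_i(a_i-a_{i-1})$ is exactly the statement $T_i\ge a_i-a_{i-1}$; this holds for $i<j_0$ because $T=T_0$ as defined by \eqref{eq-def-T-1} satisfies $T>a_{j_0-1}\ge a_i$, so $M_i>0$ for all $i\le j_0-1$. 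For $i=j_0$ the choice $a_{j_0-1}<T\le a_{j_0}$ together with \eqref{eq-def-T-1} gives $M_{j_0-1}=2\sum_{l\le j_0-1}\mu_l(a_l-a_{l-1})$ ... wait, rather $M_{j_0-1} = M_0 - 2\sum_{l=1}^{j_0-1}\mu_l(a_l-a_{l-1}) = 2\mu_{j_0}(T-a_{j_0-1})$ by the definition of $T$, hence the extinction time of the last block is precisely $T-a_{j_0-1}$, as required. Also each $u_{i-1}(\cdot,a_{i-1})$ inherits the decay bound \eqref{eq-condition-u_0} (with $\mu_0$ possibly adjusted) from the bound \eqref{eq-claim-2}/\eqref{eq-u^R-bound} proved inside Theorem \ref{thm-sequential-convergence-problem}, so Theorem \ref{thm-sequential-convergence-problem} genuinely applies at each stage with a valid initial datum.

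For (b): the construction of each $u_i$ in Theorem \ref{thm-sequential-convergence-problem} already guarantees $u_i(\cdot,t)\to u_{i-1}(\cdot,a_{i-1})$ in $L^1(\R)$ as $t\to0^+$, which is how we chose the initial data, so $u$ is continuous in $L^1$ across each junction. To see that $u$ is a classical solution of \eqref{main-very-fast-diffusion-1} across $t=a_{i-1}$, note that $u_{i-1}$ is a classical positive solution on a slightly larger interval — indeed on $\R\times(0,a_{i-1}+\eta)$ if $i-1<j_0-1$, or on $\R\times(0,a_{i-1})$ with $u_{i-1}(\cdot,a_{i-1})$ serving as datum — and $u_i$ started from that datum coincides with $u_{i-1}$ by forward uniqueness of the positive classical solution with prescribed mass loss (this is where a uniqueness statement for \eqref{eq-Cauchy-problem} with datum $u_{i-1}(\cdot,a_{i-1})$ and flux data $\mu_i$ is used; it follows from the comparison/monotonicity structure already exploited in Theorems \ref{thm-sequential-convergence-problem} and the results of \cite{RV}, \cite{Hu3}). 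Hence the two pieces fit into a single solution of the equation that is smooth in $\R\times(0,T)$, and $u(\cdot,t)\to u_0$ in $L^1$ as $t\to0$. Finally, summing the mass identities $\int_\R u(x,t)\,dx = M_{i-1}-2\mu_i(t-a_{i-1}) = M_0 - 2\sum_{l=1}^{i-1}\mu_l(a_l-a_{l-1}) - 2\mu_i(t-a_{i-1})$ for $t\in[a_{i-1},a_i)$ gives exactly $\int_\R u(x,t)\,dx = \int_\R u_0\,dx - 2\int_0^t f\,ds$, which is \eqref{main-integral-condition-1}. The main obstacle is item (b), the smooth matching across the finitely many junction times, which rests on having the right forward-uniqueness statement for the blocks; everything else is bookkeeping with the mass identity.
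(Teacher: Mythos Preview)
The paper does not prove this lemma in detail; it simply records that Lemma~\ref{lemma-step-initial-1} (and the next one) follow from Theorem~\ref{thm-sequential-convergence-problem}, Corollary~\ref{cor-global-soln}, and ``an argument similar to the proof of Theorem~1.3 of \cite{DP}''. Your overall plan---build the blocks, track the masses $M_i$, check each extinction time exceeds the strip length, and verify the integral identity \eqref{main-integral-condition-1} by telescoping---is exactly the right bookkeeping and is presumably what that reference does. Your part (a) is fine (modulo the notational slip ``$T=T_0$''), and the remark that each $u_{i-1}(\cdot,a_{i-1}-a_{i-2})$ inherits a bound of the form \eqref{eq-condition-u_0} from \eqref{eq-u^R-bound} is the correct way to justify iterating Theorem~\ref{thm-sequential-convergence-problem}.

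Your part (b), however, contains a genuine error. You argue that the glued function is smooth across $t=a_{i-1}$ by extending $u_{i-1}$ a little past its endpoint and then invoking ``forward uniqueness with prescribed mass loss'' to conclude that this extension coincides with $u_i$. But the extension of $u_{i-1}$ still has mass-loss rate $2\mu_{i-1}$, while $u_i$ has mass-loss rate $2\mu_i$; when $\mu_{i-1}\neq\mu_i$ these are \emph{different} solutions of \eqref{eq-Cauchy-problem} with the same initial datum, so no uniqueness statement can force them to agree. The right argument is more direct and does not use uniqueness at all: since the extinction time of the $(i-1)$-th block strictly exceeds $a_{i-1}-a_{i-2}$, the datum $u_{i-1}(\cdot,a_{i-1}-a_{i-2})$ is a smooth, strictly positive, bounded function; hence by standard parabolic regularity $u_i$ is smooth up to $t=0$. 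At the junction the values match by construction, and the one-sided time derivatives match because both equal $(u^m/m)_{xx}$ evaluated at the common trace. This already gives a classical $C^{2,1}$ solution of \eqref{main-very-fast-diffusion-1} across each $t=a_{i-1}$, which is all the definition of solution of \eqref{eq-Cauchy-problem} requires.
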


\begin{lem}\label{lemma-step-initial-2}
Suppose $0\le u_0\in L^{\infty}(\R)$ satisfies \eqref{eq-condition-u_0}
for some constants $\mu_0>0$, $R_0>1$, and $0\le f\in C([0,\infty))$. 
For any $k=1,2,\cdots,$ let 
$f_k=\sum_{i=1}^{2^k}\mu_i\chi_{I_i}$ where $\mu_i=\sup_{I_i}f$, 
$I_i=[a_{i-1},a_i)$, $a_0=0$, $a_i=iT/2^{k}$, for all $i=1,2,\cdots,2^k$, 
and $T$ is given \eqref{eq-def-T-1}. Let $v_k$ be the solution of 
\eqref{eq-Cauchy-problem} in $\R\times(0,T_k)$ given by 
Lemma \ref{lemma-step-initial-1} which satisfies 
\eqref{main-integral-condition-1} in $\R\times(0,T_k)$
with $f$ being replaced by $f_k$ where $T_k$ 
is given by \eqref{eq-def-T-1} with $f=f_k$. Then $v_{k+1}\geq v_k$ on 
$\R\times(0,T_k)$ for all $k=1,2,\cdots,$ and as $k\to\infty$
$v_k$ will converge 
uniformly on every compact subset of $\R\times(0,T)$ to a solution 
$u$ of \eqref{eq-Cauchy-problem} in $\R\times(0,T)$ that satisfies 
\eqref{main-integral-condition-1}.
\end{lem}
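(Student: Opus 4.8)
The plan is to derive the statement from the comparison theory for the constant–flux Cauchy problem already built in Theorem~\ref{thm-sequential-convergence-problem} and Corollary~\ref{cor-global-soln}, combined with the elementary monotonicity of the truncated fluxes $f_k$. First I would record the properties of $\{f_k\}$. Since each dyadic interval $I_i^{(k+1)}=[a_{i-1},a_i)$ of generation $k+1$ lies inside some interval $I_j^{(k)}$ of generation $k$, one has $\sup_{I_i^{(k+1)}}f\le\sup_{I_j^{(k)}}f$, so $f\le f_{k+1}\le f_k$ on $[0,T)$; and since $f$ is uniformly continuous on $[0,T]$, the oscillation $\sup_{I_i}f-\inf_{I_i}f$ tends to $0$ uniformly in $i$ as $k\to\infty$, so $f_k\to f$ uniformly on $[0,T]$. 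Together with $2\int_0^{T_k}f_k\,ds=\int_{\R}u_0\,dx=2\int_0^T f\,ds$ this forces $T_k\uparrow T$ and, for each fixed $t<T$, $t<T_k$ for all large $k$.

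Next I would prove $v_{k+1}\ge v_k$ on $\R\times(0,T_k)$. The underlying tool is the classical maximum principle for the non-degenerate Dirichlet approximations used to build the solutions of Theorem~\ref{thm-sequential-convergence-problem}: for a fixed $\3\in(0,1)$, if $\mu\ge\tilde\mu>0$ and $u_0\le\tilde u_0$, then, since $(\mu|m|R)^{1/m}\le(\tilde\mu|m|R)^{1/m}$ and $u_0+\3\le\tilde u_0+\3$, the solution of the Dirichlet problem with flux $\mu$, boundary value $(\mu|m|R)^{1/m}$ and initial datum $u_0+\3$ lies below the one with flux $\tilde\mu$, boundary value $(\tilde\mu|m|R)^{1/m}$ and initial datum $\tilde u_0+\3$; letting $\3\to0$ and then $R\to\infty$ along the sequences entering the construction — this is the point at which the subsequence choices of Theorem~\ref{thm-sequential-convergence-problem} must be tracked, exactly as in the Corollary immediately following it — gives that the constant–flux Cauchy solution is nonincreasing in the flux and nondecreasing in the initial datum. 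Concatenating these two monotonicities over the common refinement of the partitions of $f_k$ and $f_{k+1}$ (which is simply the generation-$(k{+}1)$ partition), and using at each node the inequality already obtained on the preceding subinterval as the ordering of initial data, yields $v_{k+1}\ge v_k$ on $\R\times(0,T_k)$; a vanishing flux $\mu_i=0$ on some subinterval is handled through Corollary~\ref{cor-global-soln}.

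With monotonicity available I would pass to the limit. The maximum principle for the approximate Dirichlet problems — whose boundary data $(\mu_i|m|R)^{1/m}$ tend to $0$ as $R\to\infty$ because $1/m<0$ — gives $0<v_k\le\|u_0\|_{L^{\infty}}$ on $\R\times(0,T_k)$, as in Theorem~\ref{thm-sequential-convergence-problem}. For a compact $K\subset\R\times(0,T)$ pick $k_0$ with $K\subset\R\times(0,T_{k_0})$; then $v_k\ge v_{k_0}\ge\min_K v_{k_0}>0$ on $K$ for all $k\ge k_0$, so $\{v_k\}$ is uniformly bounded away from $0$ and $\infty$ on $K$, and by the interior Schauder estimates for \eqref{main-very-fast-diffusion-1} it is equi-H\"older continuous there. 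Hence the nondecreasing limit $u:=\lim_k v_k$, defined on all of $\R\times(0,T)$ since every $t<T$ lies in some $(0,T_k)$, is attained uniformly on every compact subset of $\R\times(0,T)$, and $u>0$ is a classical solution of \eqref{main-very-fast-diffusion-1} there.

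It remains to check the two defining properties. For $t<T_k$ one has $\int_{\R}v_k(x,t)\,dx=\int_{\R}u_0\,dx-2\int_0^t f_k\,ds$; fixing $t<T$ and letting $k\to\infty$, the monotone convergence theorem (with $\int_{\R}v_k(\cdot,t)\le\int_{\R}u_0<\infty$) on the left and the uniform convergence $f_k\to f$ on the right give $\int_{\R}u(x,t)\,dx=\int_{\R}u_0\,dx-2\int_0^t f\,ds$, which is \eqref{main-integral-condition-1} and also shows $u(\cdot,t)\in L^1(\R)$. For the initial trace, $v_1$ is itself a solution of \eqref{eq-Cauchy-problem}, so $\|v_1(\cdot,t)-u_0\|_{L^1}\to0$ as $t\to0$, while for $t<T_1$, using $u\ge v_1$ and the two mass identities, $\|u(\cdot,t)-v_1(\cdot,t)\|_{L^1}=\int_{\R}(u-v_1)(x,t)\,dx=2\int_0^t(f_1-f)\,ds\le 2t\,\|f_1-f\|_{L^{\infty}[0,T]}\to0$; adding, $u(\cdot,t)\to u_0$ in $L^1(\R)$ as $t\to0$, so $u$ solves \eqref{eq-Cauchy-problem} in $\R\times(0,T)$ and satisfies \eqref{main-integral-condition-1}. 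I expect the real obstacle to be the comparison step: the maximum principle is routine on a fixed cylinder, but propagating it cleanly across the jump times of the step fluxes while keeping it consistent with the subsequences produced by Theorem~\ref{thm-sequential-convergence-problem} and Corollary~\ref{cor-global-soln} (and with a possibly vanishing flux) is where the care is needed.
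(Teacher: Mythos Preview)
Your argument is correct and is precisely the content the paper has in mind: in the paper Lemma~\ref{lemma-step-initial-2} is stated without a written proof, merely with the remark that it follows from Theorem~\ref{thm-sequential-convergence-problem}, Corollary~\ref{cor-global-soln}, and ``an argument similar to the proof of Theorem~1.3 of \cite{DP}''. Your proposal uses exactly those ingredients --- the flux/initial-data monotonicity inherited from the Dirichlet approximations (which is the content of the unnumbered corollary immediately after Theorem~\ref{thm-sequential-convergence-problem}), the dyadic refinement giving $f\le f_{k+1}\le f_k$ and $T_k\uparrow T$, Schauder estimates for compactness, monotone convergence for the mass identity, and a triangle-inequality argument against $v_1$ for the initial trace --- and so matches the intended route.

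One remark on the point you yourself flag as the delicate one. At this stage of the paper only the \emph{sequential} Theorem~\ref{thm-sequential-convergence-problem} is available (Theorem~\ref{thm-convergence-Dirichlet-problem} is proved later, using Lemma~\ref{lemma-step-initial-2} through Theorem~\ref{thm-unique}), so the pieces $u_i$ in Lemma~\ref{lemma-step-initial-1} are subsequential limits. Your handling is fine: for the comparison $v_{k+1}\ge v_k$ on a given dyadic subinterval one passes to a common refining subsequence of the $R$'s along which both Dirichlet approximants converge, and the pointwise inequality $u_{\3}^{R,\mu}\le u_{\3}^{R,\tilde\mu}$ for $\mu\ge\tilde\mu$ (together with the ordered initial data from the previous step) survives the limit. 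This is exactly the mechanism behind the corollary following Theorem~\ref{thm-sequential-convergence-problem}, so there is no circularity.
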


\begin{lem}\label{lemma-step-initial-3}
Suppose $0\le u_0\in L^{\infty}(\R)$ satisfies \eqref{eq-condition-u_0}
for some constants $\mu_0>0$, $R_0>1$. If $u$ is the solution of 
\eqref{eq-Cauchy-problem} in $\R\times (0,T_{\mu})$ given by 
Theorem \ref{thm-sequential-convergence-problem} which satisfies
\eqref{eq-thm-condition-1} where $T_{\mu}$ is given by \eqref{eq-mu-time}, 
then $u$ satisfies \eqref{eq-condition-neumann-2}
uniformly on $[a,b]$ for any $0<a<b<T_{\mu}$.
\end{lem}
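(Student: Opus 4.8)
The plan is to recover the asymptotic rate \eqref{eq-condition-neumann-2} for the constructed solution $u$ by sandwiching it, in the average sense already developed in the proof of Theorem~\ref{thm-sequential-convergence-problem}, between the self-similar barriers $v^{T+\delta}$ and $v^{T-\delta}$ of Corollary~\ref{cor-limit-condition-v-1} and then letting $\delta\to 0$. Recall that $u$ was obtained on $\R\times(0,T_\mu-3\delta]$ as a uniform-on-compacta limit of a subsequence of the Dirichlet solutions $u^{R_k}$, and that along the way we proved the two-sided bound \eqref{eq-lower-upper-general-1} on the operator $G_{R_k}^\ast$ applied to $u^{R_k}-v^{T\pm\delta}$, with a constant $L_\delta$ independent of $k$. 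Passing to the limit $k\to\infty$ in \eqref{eq-lower-upper-general-1} (using that $G_{R_k}^\ast$ converges, after subtracting the linear-growth part as in Lemma~\ref{G_R-aymptotic}, to the whole-line potential of the relevant signed measure) yields, for fixed $\delta$,
\begin{equation*}
-L_\delta+G^\ast(v^{T-\delta}(\cdot,t)-u(\cdot,t))(x)\le 0\le L_\delta+G^\ast(v^{T+\delta}(\cdot,t)-u(\cdot,t))(x)
\end{equation*}
for all $x\in\R$ and $0\le t\le T_\mu-3\delta$, where $G^\ast(h)(x)=\tfrac12|x|\int_\R h-\int_0^x yh\,dy+o(|x|)$ by Lemma~\ref{G_R-aymptotic}.

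First I would convert these potential inequalities into pointwise information on $u$ itself. Fix $0<a<b<T_\mu$ and choose $\delta>0$ small so that $b\le T_\mu-3\delta$. The function $w_\pm:=v^{T\pm\delta}-u$ solves a linear parabolic equation of the form $(w_\pm)_t=c_\pm(x,t)(w_\pm)_{xx}$ on the region where $u$ is classical, with $c_\pm>0$; since both $u$ and $v^{T\pm\delta}$ satisfy \eqref{main-very-fast-diffusion-1} and have the same asymptotic order $|x|^{1/m}$ (Corollary~\ref{cor-limit-condition-v-1} and \eqref{eq-claim-2}), the difference is controlled. Using the comparison/sandwich already available — namely \eqref{eq-claim-2}, which gives $u(x,t)\le(\mu_0|m|(|x|-R_0))^{1/m}$, together with the explicit lower barrier $(\mu|m||x|+a_0(T\pm\delta-t)^{m/(1+m)})^{1/m}\le v^{T\pm\delta}(x,t)$ from Corollary~\ref{cor-limit-condition-v-1} — I would argue that on $[a,b]$ we have, as $|x|\to\infty$,
\begin{equation*}
(\mu|m||x|+a(T-\delta-t)^{\frac{m}{1+m}})^{\frac1m}\le v^{T-\delta}(x,t),\qquad v^{T+\delta}(x,t)\le(\mu|m||x|)^{\frac1m},
\end{equation*}
so that $v^{T+\delta}$ and $v^{T-\delta}$ themselves both satisfy $v^m/(m|x|)\to-\mu$ uniformly on $[a,b]$. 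The remaining task is to transfer this rate to $u$, which sits between them only in the averaged/potential sense, not pointwise.

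This transfer is the main obstacle, and I expect to handle it exactly as in the convergence proof: by a barrier argument directly at the level of $u^{R_k}$ rather than $u$. Since $v^{T+\delta}$ is a supersolution of the Dirichlet problem on $I_{R_k}$ up to a boundary error $a(T+\delta)^{m/(1+m)}$ and $v^{T-\delta}$ a subsolution up to a comparable error (these are precisely the estimates \eqref{eqn-boundary-comparison} and \eqref{eq-v-u-boundary-compare-2} used before), the maximum principle applied to $u^{R_k}_\3-v^{T\pm\delta}$ on the strip $\{|x|\ge r_0(4\delta)^{m/(1+m)}\}\cap I_{R_k}$, combined with the known behaviour of $u^{R_k}_\3$ on the two pieces of boundary $|x|=r_0(4\delta)^{m/(1+m)}$ and $|x|=R_k$, yields
\begin{equation*}
v^{T+\delta}(x,t)-E_\delta(|x|)\le u^{R_k}(x,t)\le v^{T-\delta}(x,t)+E_\delta(|x|)\qquad(R_1\le|x|\le R_k,\ a\le t\le b)
\end{equation*}
where $E_\delta(|x|)=o(|x|^{1/m})$ as $|x|\to\infty$; letting $k\to\infty$ gives the same two-sided bound for $u$. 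Dividing by $m|x|$ (note $m<0$, so inequalities reverse), using $v^{T\pm\delta,m}/(m|x|)\to-\mu$ uniformly on $[a,b]$ and $E_\delta(|x|)=o(|x|^{1/m})$, and finally letting $\delta\to0$, I obtain $u^m(x,t)/(m|x|)\to-\mu$ uniformly on $[a,b]$, which is \eqref{eq-condition-neumann-2}. The delicate points to check carefully are: (i) that the parabolic comparison on the unbounded-in-$x$, bounded-in-$t$ strip is justified despite $u$ possibly degenerating as $t\to0$ — this is why one restricts to $t\in[a,b]$ with $a>0$, where the Aronson–Benilan lower bound \eqref{eq-aronson-benilan-estimate} keeps $u$ bounded away from $0$; and (ii) that the error term $E_\delta$ can be taken uniform in $t\in[a,b]$, which follows from the uniformity statements in Corollary~\ref{cor-limit-condition-v-1}.
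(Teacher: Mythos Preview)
Your proposal has a genuine gap at the step where you claim to obtain a pointwise sandwich
\[
v^{T+\delta}(x,t)-E_\delta(|x|)\le u^{R_k}(x,t)\le v^{T-\delta}(x,t)+E_\delta(|x|)
\]
with $E_\delta(|x|)=o(|x|^{1/m})$. The maximum principle on the strip $\{r_1\le|x|\le R_k\}\times[a,b]$ (or $\times(0,T-3\delta)$) requires control of $u^{R_k}_\3-v^{T\pm\delta}$ on the full parabolic boundary. On the outer boundary $|x|=R_k$ you do have the right comparison, but on the inner boundary $|x|=r_1$ and at the initial time you have \emph{no pointwise information whatsoever} relating $u^{R_k}$ to $v^{T\pm\delta}$: the entire point of introducing the operator $G_R^\ast$ in Theorem~\ref{thm-sequential-convergence-problem} was precisely that the initial data $u_0$ and $v^{T\pm\delta}(\cdot,0)$ are only comparable in this averaged/potential sense, not pointwise. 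Moreover, even if you somehow extracted a bounded error $E_\delta=O(1)$ at the inner boundary, this would be useless: since $|x|^{1/m}\to 0$ as $|x|\to\infty$, a bound $u\le v^{T-\delta}+C$ gives $u^m/(m|x|)\le (v^{T-\delta}+C)^m/(m|x|)\to 0$, which carries no information about the rate $-\mu$.

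The paper's proof avoids this obstruction by comparing $u$ not with the self-similar solutions $v^{T\pm\delta}$ but with the solutions $\tilde u_{\mu_1},\tilde u_{\mu_2}$ (for $\mu_1<\mu<\mu_2$) constructed in \cite{Hu3} via Neumann approximation. These have two crucial advantages: they share the \emph{same} initial data $u_0$, so the initial comparison is trivial; and they are already known from \cite{Hu3} to satisfy \eqref{eq-condition-neumann-2} with the rates $\mu_1,\mu_2$. The latter gives $\tilde u_{\mu_1}(\pm R_k,t)\ge(\mu|m|R_k)^{1/m}=u_\3^{R_k,\mu}(\pm R_k,t)$ for large $R_k$, so an $L^1$-contraction argument on the \emph{full} interval $|x|\le R_k$ (no inner boundary) yields $\tilde u_{\mu_2}\le u\le\tilde u_{\mu_1}$ pointwise. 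Sandwiching $u^m/(m|x|)$ between $\tilde u_{\mu_2}^m/(m|x|)\to-\mu_2$ and $\tilde u_{\mu_1}^m/(m|x|)\to-\mu_1$, and letting $\mu_1,\mu_2\to\mu$, gives \eqref{eq-condition-neumann-2}. The key idea you are missing is to import comparison functions that match the initial data and already carry the desired asymptotic.
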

\begin{proof}
Let $\{R_k\}_{k=1}^{\infty}$ be a sequence of positive numbers
such that $R_k\to\infty$ as $k\to\infty$. By the proof of 
Theorem \ref{thm-sequential-convergence-problem} 
the sequence $\{R_k\}_{k=1}^{\infty}$ has a subsequence which we still 
denote by $\{R_k\}_{k=1}^{\infty}$ such that the sequence of solution 
$\{u^{R_k,\mu}\}_{k=1}^{\infty}$ of \eqref{problem-Dirichlet} with
$R=R_k$ converges uniformly on every 
compact subset of $\R\times(0,T_{\mu})$ to $u$ as $k\to\infty$. 

For any $\mu_2>\mu>\mu_1>0$, let $\4{u}_{\mu_1}$, $\4{u}_{\mu_2}$, be the 
solutions of \eqref{eq-Cauchy-problem} in $\R\times (0,T_{\mu_1})$ and 
$\R\times (0,T_{\mu_2})$ 
respectively constructed by the Neumann approximation given by 
Theorem 4.6 of \cite{Hu3} where $T_{\mu_1}$, $T_{\mu_2}$ is given by 
\eqref{eq-mu-time} with $\mu=\mu_1,\mu_2$ respectively. Then by \cite{Hu3} 
$\4{u}_{\mu_1}$ and $\4{u}_{\mu_2}$ satisfy \eqref{eq-thm-condition-1} 
with $\mu =\mu_1, \mu_2$ respectively and \eqref{eq-condition-neumann-2} 
with $\mu =\mu_1, \mu_2$ uniformly on $[a,b]$ for any $0<a<b<T_{\mu_i}$, 
$i=1,2$, respectively. Moreover $T_{\mu_1}=\int_{\R}u_0\,dx/2\mu_1
>T_{\mu}>T_{\mu_2}=\int_{\R}u_0\,dx/2\mu_2$.

Since $\4{u}_{\mu_1}$ satisfies \eqref{eq-condition-neumann-2} 
with $\mu=\mu_1$ and $\mu-\mu_1>0$, for any $0<t_1<t_2<T_{\mu}$ there 
exists $r_0>1$ such that 
\begin{equation*}
\frac{\4{u}_{\mu_1}^m}{mx}>-\mu_1-\left(\frac{\mu-\mu_1}{2}\right)
=-\frac{\mu+\mu_1}{2} \qquad \forall x\geq r_0,\,\,t_1\leq t\leq t_2
\end{equation*}
and 
\begin{equation*}
\frac{\4{u}_{\mu_1}^m}{mx}<\mu_1+\left(\frac{\mu-\mu_1}{2}\right)
=\frac{\mu+\mu_1}{2} \qquad \forall x\leq -r_0,\,\,t_1\leq t\leq t_2.
\end{equation*}
Hence
\begin{equation*}
\4{u}_{\mu_1}(x,t)\geq \left(\frac{(\mu+\mu_1)}{2}|m||x|
\right)^{\frac{1}{m}} 
\quad\forall |x|\geq r_0, \,\,t_1\leq t\leq t_2.
\end{equation*}
Thus
\begin{equation*}
\4{u}_{\mu_1}(\pm R_k,t) \geq \left(\mu|m|R_k\right)^{\frac{1}{m}}
=u_{\3}^{R_k,\mu}(\pm R_k,t)\quad\forall |x|=R_k>r_0,t_1\le t\le t_2,
0<\3<1.
\end{equation*}

Hence by \eqref{eq-condition-u_0} and an argument 
similar to the proof of Lemma 2.3 of \cite{DK} and Lemma 2.5 of 
\cite{Hu3}, for any $0<\3<1$, $t_1\le t'\le t\le t_2$,
\begin{equation}\label{eq-unique-compare-1}
\int_{|x|\le R_k}(u_{\3}^{R_k,\mu}(x,t)-\4{u}_{\mu_1}(x,t))_+\,dx
\le\int_{|x|\le R_k}(u_{\3}^{R_k,\mu}(x,t')-\4{u}_{\mu_1}(x,t'))_+\,dx
\end{equation} 
Letting $\3\to 0$ in \eqref{eq-unique-compare-1},
\begin{equation}\label{eq-unique-compare-2}
\int_{|x|\le R_k}(u^{R_k,\mu}(x,t)-\4{u}_{\mu_1}(x,t))_+\,dx
\le\int_{|x|\le R_k}(u^{R_k,\mu}(x,t')-\4{u}_{\mu_1}(x,t'))_+\,dx
\end{equation}
Since $u^{R_k,\mu}$ satisfies \eqref{eq-u^R-bound} in 
$(I_{R_k}\setminus I_{R_0})\times (0,T_{\mu})$, letting $k\to\infty$
in \eqref{eq-unique-compare-2} by the Lebesgue dominated convergence 
theorem we get
\begin{equation}\label{eq-unique-compare-3}
\int_{\R}(u(x,t)-\4{u}_{\mu_1}(x,t))_+\,dx
\le\int_{\R}(u(x,t')-\4{u}_{\mu_1}(x,t'))_+\,dx
\end{equation}
Letting $t'\to 0$ in \eqref{eq-unique-compare-3}, 
\begin{equation}\label{eq-unique-compare-4}
\int_{\R}(u(x,t)-\4{u}_{\mu_1}(x,t))_+\,dx\le 0
\quad\forall 0<t<t_2.
\end{equation}
Since $t_2$ is arbitrary,
\begin{equation*}
u(x,t)\le\4{u}_{\mu_1}(x,t) \qquad \forall x\in\R,\,\,0<t<T_{\mu}.
\end{equation*}
Similarly
\begin{equation*}
u(x,t)\ge\4{u}_{\mu_2}(x,t) \qquad \forall x\in\R,\,\,0<t<T_{\mu_2}.
\end{equation*}
Hence
\begin{equation}\label{eq-compare-infty-1}
\left\{\begin{aligned}
&\frac{\4{u}_{\mu_2}^m(x,t)}{mx}\le\frac{u^m(x,t)}{mx}
\le\frac{\4{u}_{\mu_1}^m(x,t)}{mx}\quad\forall 0<t<T_{\mu_2},x>0\\
&\frac{\4{u}_{\mu_2}^m(x,t)}{mx}\ge\frac{u^m(x,t)}{mx}
\ge\frac{\4{u}_{\mu_1}^m(x,t)}{mx}\quad\forall 0<t<T_{\mu_2}, x<0.
\end{aligned}\right.
\end{equation}
Let $0<a<b<T_{\mu}$ and $\3>0$. We now choose $\mu_2>\mu$ sufficiently close 
to $\mu$ such that $T_{\mu_2}>b$ and $\max\{\mu_2-\mu, \mu-\mu_1\}
<\frac{\epsilon}{2}$. Since $\4{u}_{\mu_1}$, $\4{u}_{\mu_2}$,
satisfies \eqref{eq-condition-neumann-2} with $\mu=\mu_1, \mu_2$, 
there exists $r_1>1$ such that $\forall x\geq r_1$, $a\leq t\leq b$,
\begin{equation}\label{eq-compare-infty-2}
\begin{cases}
\frac{\tilde{u}_{\mu_1}^m(x,t)}{mx}<-\mu_1+\frac{\epsilon}{2}
<-\mu+\epsilon\\
\frac{\tilde{u}_{\mu_2}^m(x,t)}{mx}>-\mu_2-\frac{\epsilon}{2}
>-\mu-\epsilon
\end{cases}
\end{equation}
and $\forall x\leq -r_1$, $a\leq t\leq b$,
\begin{equation}\label{eq-compare-infty-3}
\begin{cases}
\frac{\tilde{u}_{\mu_1}^m(x,t)}{mx}>\mu_1-\frac{\epsilon}{2}>\mu-\epsilon\\
\frac{\tilde{u}_{\mu_2}^m(x,t)}{mx}<\mu_2+\frac{\epsilon}{2}<\mu+\epsilon.
\end{cases}
\end{equation}
By \eqref{eq-compare-infty-1}, \eqref{eq-compare-infty-2},
and \eqref{eq-compare-infty-3}
\begin{equation*}
\left|\frac{u^m(x,t)}{m|x|}+ \mu\right|<\3\quad\forall |x|\geq r_1, 
a\le t\le b.
\end{equation*}
Hence $u$ satisfies \eqref{eq-condition-neumann-2} and the lemma follows.
\end{proof}

\begin{cor}\label{cor-step-initial-4}
Suppose $0\le u_0\in L^{\infty}(\R)$ satisfies \eqref{eq-condition-u_0}
for some constants $\mu_0>0$, $R_0>1$, and 
$f=\sum_{i=1}^{i_0}\mu_i \chi_{I_i}$ is a step function on $[0,T)$ 
where $0=a_0<a_1<\cdots<a_{i_0}=T_0$ is a partition of the interval 
$[0,T_0]$, $I_i=[a_{i-1},a_i)$, $\mu_i\geq 0$ for all $i=1,2,\cdots,i_0$ 
such that 
\begin{equation*}
2\sum_{i=1}^{i_0}\mu_i(a_i-a_{i-1})\geq \int_{\R}u_0\,dx.
\end{equation*}
Let $u$ be the solution of \eqref{eq-Cauchy-problem} in $\R\times(0,T)$ 
given by Lemma \ref{lemma-step-initial-1} which satisfies 
\eqref{main-integral-condition-1} where
$T$ is given by \eqref{eq-def-T-1}. Let $j_0\in\{1,\dots,i_0\}$ be such that 
$a_{j_0-1}<T\le a_{j_0}$. Then $u$ satisfies
\eqref{eq-limit-condition-infty}
uniformly in $[a,b]$ for all $a'_{i-1}<a<b<a_i'$
with $a_i'=a_i$ for all $i=1,2,\cdots,j_0-1$, and $a_{j_0}'=T$.
\end{cor}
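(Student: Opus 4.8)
The plan is to reduce Corollary \ref{cor-step-initial-4} to the single-rate asymptotics of Lemma \ref{lemma-step-initial-3} and the monotone approximation of Corollary \ref{cor-global-soln}, exploiting the piecewise nature of $u$. By Lemma \ref{lemma-step-initial-1}, $u(x,t)=u_i(x,t-a_{i-1})$ for $x\in\R$ and $a_{i-1}\le t<a_i$ (with $t<T$ when $i=j_0$), $i=1,\dots,j_0$, where $u_i$ is the solution of \eqref{eq-Cauchy-problem} with injection rate $\mu_i$ and initial value $u_{i-1}(\cdot,a_{i-1})$ (the given $u_0$ when $i=1$), furnished by Theorem \ref{thm-sequential-convergence-problem} when $\mu_i>0$ and by Corollary \ref{cor-global-soln} when $\mu_i=0$; iterating \eqref{eq-thm-condition-1} gives $\int_\R u_{i-1}(\cdot,a_{i-1})\,dx=\int_\R u_0\,dx-2\sum_{k=1}^{i-1}\mu_k(a_k-a_{k-1})$. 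Since $f\equiv\mu_i$ on $(a_{i-1}',a_i')$, it suffices to prove, for each $i\in\{1,\dots,j_0\}$,
\begin{equation}\label{eq-cor-reduction-proposal}
\frac{u_i^m(x,s)}{m|x|}\longrightarrow-\mu_i\qquad\mbox{uniformly for }s\in[\alpha,\beta]\ \mbox{ as }|x|\to\infty
\end{equation}
whenever $0<\alpha<\beta<a_i'-a_{i-1}$; the substitution $s=t-a_{i-1}$ then turns \eqref{eq-cor-reduction-proposal} into \eqref{eq-limit-condition-infty} uniformly on every $[a,b]\subset(a_{i-1}',a_i')$.

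The first point to settle, and the one I expect to be the main obstacle, is that the single-rate machinery (Theorem \ref{thm-sequential-convergence-problem}, Lemma \ref{lemma-step-initial-3}) is legitimately applicable at each step, i.e., that $u_{i-1}(\cdot,a_{i-1})$ satisfies \eqref{eq-condition-u_0} with a decay constant $\le\mu_i$ when $\mu_i>0$, and with arbitrarily small decay constants in general. By the bound \eqref{eq-claim-2}, a solution of Theorem \ref{thm-sequential-convergence-problem} arising from a datum obeying \eqref{eq-condition-u_0} with constants $(\mu_0,R_0)$ obeys $u(x,t)\le(\mu_0|m|(|x|-R_0))^{1/m}$ for $|x|\ge R_0$ and all $t$ in its domain. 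Since $r\mapsto r^{1/m}$ is decreasing, for $0<\mu'<\mu_0$ one has $(\mu_0|m|(|x|-R_0))^{1/m}\le(\mu'|m||x|)^{1/m}$ as soon as $|x|\ge\mu_0R_0/(\mu_0-\mu')$; hence any function obeying that $(|x|-R_0)$-bound with constant $\mu_0$ satisfies \eqref{eq-condition-u_0} with every $\mu'\in(0,\mu_0)$ (after enlarging $R_0$). Inducting on $i$, each $u_{i-1}(\cdot,a_{i-1})$ therefore satisfies \eqref{eq-condition-u_0} with some positive constant, and hence with all smaller ones; in particular with a constant $\le\mu_i$ whenever $\mu_i>0$. (Applied to $u_0$, this is exactly what makes the $u_i$ well defined.)

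When $\mu_i>0$, $u_i$ is precisely the solution of \eqref{eq-Cauchy-problem} given by Theorem \ref{thm-sequential-convergence-problem} with rate $\mu_i$, datum $u_{i-1}(\cdot,a_{i-1})$ and mass law \eqref{eq-thm-condition-1}, living on $(0,T_i)$ with $T_i=\frac1{2\mu_i}\int_\R u_{i-1}(\cdot,a_{i-1})\,dx$. Lemma \ref{lemma-step-initial-3} then yields \eqref{eq-condition-neumann-2} with $\mu=\mu_i$ uniformly on compact subsets of $(0,T_i)$, which is \eqref{eq-cor-reduction-proposal}; it remains only to check that $a_i'-a_{i-1}\le T_i$. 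Using the mass formula of the first paragraph together with \eqref{eq-def-T-1}, one finds $a_i-a_{i-1}<T_i$ for $i<j_0$ (recall $a_i'=a_i$ there) and $T-a_{j_0-1}=T_{j_0}$ for $i=j_0$ (recall $a_{j_0}'=T$; here $\mu_{j_0}>0$, which follows from $a_{j_0-1}<T$ via \eqref{eq-def-T-1}). Hence $(0,a_i'-a_{i-1})\subseteq(0,T_i)$ and \eqref{eq-cor-reduction-proposal} holds.

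When $\mu_i=0$ (which can happen only for $i<j_0$), $u_i$ is the global mass-preserving solution of Corollary \ref{cor-global-soln}, i.e., the increasing limit $u_i=\lim_{\mu'\to0^+}u_{i,\mu'}$ of the solutions $u_{i,\mu'}$ of \eqref{eq-Cauchy-problem} with rate $\mu'>0$ and datum $u_{i-1}(\cdot,a_{i-1})$. For $\mu'$ small, Lemma \ref{lemma-step-initial-3} applies to $u_{i,\mu'}$ (by the second paragraph) and gives $u_{i,\mu'}^m(x,s)/(m|x|)\to-\mu'$ uniformly for $s\in[\alpha,\beta]$ as $|x|\to\infty$, since $[\alpha,\beta]\subset(0,T_{i,\mu'})$ once $\mu'$ is small enough ($T_{i,\mu'}=\frac1{2\mu'}\int_\R u_{i-1}(\cdot,a_{i-1})\,dx\to\infty$ as $\mu'\to0$). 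Because $u_i\ge u_{i,\mu'}>0$ and $m<0$, $|x|>0$, we have $0>u_i^m(x,s)/(m|x|)\ge u_{i,\mu'}^m(x,s)/(m|x|)$, so $\liminf_{|x|\to\infty}u_i^m(x,s)/(m|x|)\ge-\mu'$ uniformly on $[\alpha,\beta]$; letting $\mu'\to0$ and using the trivial bound $u_i^m/(m|x|)<0$ yields \eqref{eq-cor-reduction-proposal} with $\mu_i=0$. Besides the decay-constant bookkeeping already flagged, this $\mu_i=0$ case is the other delicate point, since the target value $0$ emerges only as a limit over the approximating rates $\mu'$ and not from a single comparison.
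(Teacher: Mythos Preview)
Your proposal is correct and follows exactly the route the paper intends: Corollary~\ref{cor-step-initial-4} is stated there without proof, as an immediate consequence of applying Lemma~\ref{lemma-step-initial-3} to each piece $u_i$ of the solution built in Lemma~\ref{lemma-step-initial-1}. Your additional bookkeeping---verifying that \eqref{eq-condition-u_0} propagates (via \eqref{eq-claim-2}) to each intermediate datum $u_{i-1}(\cdot,a_{i-1})$ with an arbitrarily small decay constant, and handling the degenerate case $\mu_i=0$ through the monotone approximation of Corollary~\ref{cor-global-soln}---fills in precisely the details the paper leaves implicit.
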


By Corollary \ref{cor-step-initial-4}, (3.25), and an argument
similar to the proof of Theorem 1.11 of \cite{Hs2} we have the 
following lemma.

\begin{lem}\label{lem-general-f-exist-1}
Suppose $0\le u_0\in L^{\infty}(\R)$ satisfies \eqref{eq-condition-u_0}
for some constants $\mu_0>0$, $R_0>1$, and 
$0\leq f\in C([0,\infty))$. If $u$ is the solution of 
\eqref{eq-Cauchy-problem} in $\R\times(0,T)$ given by 
Lemma \ref{lemma-step-initial-2} which satisfies 
\eqref{main-integral-condition-1}, then $u$ satisfies
\eqref{eq-limit-condition-infty} uniformly in $[a,b]$ for any $0<a<b<T$.
\end{lem}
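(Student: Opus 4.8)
The plan is to transfer the asymptotic condition \eqref{eq-limit-condition-infty} from the step-function approximants $v_k$ of Lemma \ref{lemma-step-initial-2} to the limit $u$. By Corollary \ref{cor-step-initial-4}, each $v_k$ (built from the step function $f_k=\sum_{i=1}^{2^k}\mu_i\chi_{I_i}$ with $\mu_i=\sup_{I_i}f$) satisfies \eqref{eq-limit-condition-infty} with $f$ replaced by the piecewise-constant $f_k$, uniformly on compact time intervals avoiding the partition points $a_i=iT/2^k$. The obstacle is that the partition points move with $k$, so the exceptional times are dense; one cannot simply pass to the limit at a fixed $t$ via the $v_k$'s alone. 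Instead I would use a two-sided squeeze: for a fixed $0<a<b<T$ and fixed $t\in[a,b]$, sandwich $u(\cdot,t)$ between solutions whose boundary behaviour at spatial infinity is governed by $\sup_{[t-\eta,t]}f$ and $\inf_{[t-\eta,t]}f$ for small $\eta$, then let $\eta\to 0$ using the continuity of $f$.

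First I would record the monotone convergence $v_k\uparrow u$ on $\R\times(0,T)$ from Lemma \ref{lemma-step-initial-2}, together with the uniform bound $u\le\|u_0\|_{L^\infty}$ and the upper barrier $u(x,t)\le(\mu_0|m|(|x|-R_0))^{1/m}$ for $|x|\ge R_0$ (which follows as in \eqref{eq-claim-2}, since this estimate only uses \eqref{eq-condition-u_0} and is preserved under the limits defining $u$). This gives the easy half of \eqref{eq-limit-condition-infty}: it forces $\limsup_{|x|\to\infty}u^m(x,t)/(m|x|)\le -\mu_0$, and more importantly by comparing $u$ on $[t_1,t_2]$ with the solution having constant flux data $\sup_{[t_1,t_2]}f$ (via Lemma \ref{lemma-step-initial-1} with a one-step $f$ and a comparison argument as in the proof of Lemma \ref{lemma-step-initial-3}) one gets, for $t\in[t_1,t_2]$,
\begin{equation*}
-\Bigl(\sup_{[t_1,t_2]}f\Bigr)\le\liminf_{|x|\to\infty}\frac{u^m(x,t)}{m|x|}\le\limsup_{|x|\to\infty}\frac{u^m(x,t)}{m|x|}\le-\Bigl(\inf_{[t_1,t_2]}f\Bigr),
\end{equation*}
once we know the corresponding inequalities for the step approximants and that the squeezing solutions themselves satisfy the sharp asymptotics of Corollary \ref{cor-step-initial-4}. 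The key technical input is therefore a comparison principle on the expanding cylinders $I_{R_k}\times(t_1,t_2)$ of the kind already used in \eqref{eq-unique-compare-1}--\eqref{eq-unique-compare-4}: if the flux data of one Dirichlet problem dominates that of another near $\pm R_k$, the ordering propagates, and passing $R_k\to\infty$ (using the uniform tail bound to justify Lebesgue dominated convergence) transfers it to $u$.

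Finally, fixing $t\in(0,T)$ and shrinking $[t_1,t_2]\downarrow\{t\}$, the continuity of $f$ gives $\sup_{[t_1,t_2]}f\to f(t)$ and $\inf_{[t_1,t_2]}f\to f(t)$, so the squeeze collapses to $\lim_{|x|\to\infty}u^m(x,t)/(m|x|)=-f(t)$; the convergence is uniform on $[a,b]$ because $f$ is uniformly continuous there and the barriers are uniform in $t$ over $[a,b]$. I expect the main obstacle to be making the comparison step fully rigorous despite the moving partition points — specifically, showing that the intermediate solutions used for squeezing (those constructed from one-step flux functions over $[t_1,t_2]$ with initial data $u(\cdot,t_1)$) do satisfy the sharp spatial asymptotics, which requires invoking Corollary \ref{cor-step-initial-4} with a single-interval partition and checking that the restart at time $t_1$ with $L^\infty$ data satisfying \eqref{eq-condition-u_0} (with the appropriate $\mu_0$) keeps us inside the hypotheses of that corollary. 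This is exactly the argument structure of Theorem 1.11 of \cite{Hs2}, so I would follow that template closely.
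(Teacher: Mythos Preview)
Your proposal is correct and follows essentially the same approach as the paper, which proves this lemma by citing Corollary \ref{cor-step-initial-4}, the tail barrier \eqref{eq-u^R-bound}, and ``an argument similar to the proof of Theorem 1.11 of \cite{Hs2}.'' You have correctly identified the squeeze-by-constant-flux solutions structure and the role of the uniform tail bound in passing to the limit, and you even name Theorem 1.11 of \cite{Hs2} explicitly as the template; the only minor point to watch is verifying that the restarted initial data $u(\cdot,t_1)$ satisfies \eqref{eq-condition-u_0} (which follows from the barrier \eqref{eq-claim-2} after a harmless shift in $R_0$), exactly as you flagged.
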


\begin{lem}[cf. Theorem 1.12 in \cite{Hs2}]\label{lem-L1-contraction-1}
Suppose $0\leq u_{0,1}\leq u_{0,2}\in L^1(\R)$ and $f_1$, 
$f_2\in C([0,\infty))$ are such that $f_1>f_2$ on $[0,\infty)$. If 
$u_1$, $u_2$ are the solutions of \eqref{eq-Cauchy-problem}
in $\R\times(0,T)$ with initial dates $u_{0,1}$, $u_{0,2}$ which satisfy 
\eqref{main-integral-condition-1} on $(0,T)$ with $u_0=u_{0,1}, u_{0,2}$ 
and $f=f_1, f_2$, respectively, and \eqref{eq-limit-condition-infty} uniformly
on $[a,b]$ with $f=f_1, f_2$, respectively for any $0<a<b<T$, then 
$u_1\leq u_2$ on $\R\times(0,T)$.
\end{lem}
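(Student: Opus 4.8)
The plan is to run an $L^1$-contraction argument for the positive part $v_+$, where $v:=u_1-u_2$. Since $u_1,u_2$ satisfy \eqref{main-integral-condition-1} on $(0,T)$, each lies in $L^1(\R)$ for every $t\in(0,T)$, so $v(\cdot,t)\in L^1(\R)$; it then suffices to prove $\int_{\R}v_+(x,t)\,dx=0$ for all $0<t<T$, which by continuity of $v(\cdot,t)$ forces $u_1\le u_2$. The argument rests on three points: (i) on each compact interval $[t_1,t_2]\subset(0,T)$ the set $\{v(\cdot,t)>0\}$ is contained in a fixed bounded interval; (ii) $t\mapsto\int_{\R}v_+(x,t)\,dx$ is nonincreasing on $[t_1,t_2]$; and (iii) $\int_{\R}v_+(x,t_1)\,dx\to\int_{\R}(u_{0,1}-u_{0,2})_+\,dx=0$ as $t_1\to0$, the last equality because $u_{0,1}\le u_{0,2}$.

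For (i), fix $[t_1,t_2]\subset(0,T)$ and set $3\delta_0:=\min_{[t_1,t_2]}(f_1-f_2)$, which is positive since $f_1>f_2$ on $[0,\infty)$ and $f_1-f_2\in C([0,\infty))$. Applying \eqref{eq-limit-condition-infty} to $u_1$ and to $u_2$ on an interval $[a,b]\supset[t_1,t_2]$ with $b<T$, there is $\rho_0>1$ such that $\frac{u_1^m(x,t)}{m|x|}<-f_1(t)+\delta_0$ and $\frac{u_2^m(x,t)}{m|x|}>-f_2(t)-\delta_0$ for all $|x|\ge\rho_0$ and $t\in[t_1,t_2]$; subtracting and using $f_1-f_2\ge3\delta_0$ gives $\frac{u_1^m-u_2^m}{m|x|}(x,t)<-\delta_0<0$ there. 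Since $m<0$ this yields $u_1^m>u_2^m$, hence $u_1<u_2$ (as $s\mapsto s^m$ is decreasing), on $\{|x|\ge\rho_0\}\times[t_1,t_2]$, so $v_+$ vanishes there. Finally, as $u_1,u_2$ are positive classical solutions (hence smooth by parabolic regularity of the nondegenerate equation where $u>0$), they are bounded below by a constant $c_0>0$ on the compact set $\overline{I_{\rho_0}}\times[t_1,t_2]$.

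For (ii), put $w:=(u_1^m-u_2^m)/m$, so that $v_t=w_{xx}$, $\operatorname{sgn}v=\operatorname{sgn}w$ (because $m<0$), and $w=a\,v$ with $a(x,t):=\int_0^1\big(\theta u_1+(1-\theta)u_2\big)^{m-1}(x,t)\,d\theta$ smooth and strictly positive on $\R\times(0,T)$. Choose a smooth nondecreasing $p_{\3}$ with $p_{\3}\equiv0$ on $(-\infty,0]$ and $p_{\3}\equiv1$ on $[\3,\infty)$, and set $P_{\3}(s)=\int_0^sp_{\3}$ and $\Psi_{\3}(s)=\int_0^s\tau p_{\3}'(\tau)\,d\tau$, so that $0\le P_{\3}(s)\le s_+$, $0\le\Psi_{\3}\le\3$, and both vanish for $s\le0$. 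By (i), $P_{\3}(v)$ and $\Psi_{\3}(v)$ are supported in $\overline{I_{\rho_0}}$ for $t\in[t_1,t_2]$, so differentiating under the integral sign and integrating by parts (all boundary terms at $x=\pm\rho_0$ vanishing),
\[
\frac{d}{dt}\int_{\R}P_{\3}(v)\,dx=\int_{\R}p_{\3}(v)\,w_{xx}\,dx
=-\int_{\R}p_{\3}'(v)\,v_x\,w_x\,dx .
\]
Writing $w_x=a_x v+a v_x$, the right side equals $-\int p_{\3}'(v)\,a\,v_x^2\,dx-\int p_{\3}'(v)\,a_x\,v\,v_x\,dx$; the first term is $\le0$, while the second equals $-\int a_x\,\1_x[\Psi_{\3}(v)]\,dx=\int a_{xx}\,\Psi_{\3}(v)\,dx$, whose modulus is at most $\3\int_{I_{\rho_0}}|a_{xx}|\,dx\le\3\,C$ with $C<\infty$ by the lower bound $c_0$ and the smoothness of $u_1,u_2$. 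Integrating in time over $[t_1,t]\subset[t_1,t_2]$ and letting $\3\to0$ (using $P_{\3}(v)\to v_+$ in $L^1(\R)$ by dominated convergence) gives $\int_{\R}v_+(x,t)\,dx\le\int_{\R}v_+(x,t_1)\,dx$ for $t_1\le t\le t_2$.

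Combining the pieces: fix $t_2\in(0,T)$; by (ii), $\int_{\R}v_+(x,t_2)\,dx\le\int_{\R}v_+(x,t_1)\,dx$ for every $t_1\in(0,t_2)$, and letting $t_1\to0$, since $u_i(\cdot,t_1)\to u_{0,i}$ in $L^1(\R)$ and $s\mapsto s_+$ is $1$-Lipschitz, the right side tends to $\int_{\R}(u_{0,1}-u_{0,2})_+\,dx=0$; hence $\int_{\R}v_+(x,t_2)\,dx=0$, and since $t_2<T$ is arbitrary, $u_1\le u_2$ on $\R\times(0,T)$. I expect point (i) to be the main obstacle: it is exactly where the strict ordering $f_1>f_2$ and the sharp boundary asymptotics \eqref{eq-limit-condition-infty} are needed, and without the resulting confinement of $\{v>0\}$ the boundary terms at $x=\pm\infty$ in the integrations by parts cannot be controlled. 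A secondary technical point is the transport term $-\int p_{\3}'(v)\,a_x\,v\,v_x$, handled above by the auxiliary function $\Psi_{\3}$ together with the elementary bound $|\Psi_{\3}|\le\3$ rather than by a cruder estimate.
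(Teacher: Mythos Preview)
Your proof is correct and follows essentially the same strategy as the paper's: both are $L^1$-contraction arguments in which the decisive step is using the strict inequality $f_1>f_2$ together with the asymptotic condition \eqref{eq-limit-condition-infty} to show that $\frac{u_1^m}{m}-\frac{u_2^m}{m}<0$ (equivalently $u_1<u_2$) for $|x|$ large on each compact time interval, after which monotonicity of $\int_{\R}(u_1-u_2)_+\,dx$ and the initial ordering finish the argument. The only difference is packaging: the paper applies Kato's inequality with a spatial cutoff $\varphi_R$ and observes that the cutoff error $\frac{C}{R^2}\int_{R\le|x|\le 2R}\bigl(\tfrac{u_1^m}{m}-\tfrac{u_2^m}{m}\bigr)_+\,dx$ vanishes for large $R$, whereas you first prove that $\{v>0\}$ is confined to a fixed bounded interval and then derive the Kato-type inequality by hand via the regularization $p_{\3}$ and the auxiliary $\Psi_{\3}$; these are two standard implementations of the same idea.
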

\begin{proof}
Let $\vp\in C_0^{\infty}(\R)$, $0\leq \vp\leq 1$ be such that $\vp(x)=1$ 
for $|x|\leq 1$ and $\vp(x)=0$ for $|x|\geq 2$. For any $R>0$, let 
$\vp_{R}=\vp(x/R)$. Then by the Kato inequality \cite{K},
\begin{equation}\label{eq-partial-t-1}
\begin{aligned}
\frac{\partial}{\partial t}\int_{\R}(u_1-u_2)_+(x,t) \vp_{R}(x)\,dx 
&\leq \int_{\R}\left(\frac{u_1^m}{m}-\frac{u_2^m}{m}\right)_+
\vp_{R,xx}(x)\,dx\\
&\leq \frac{C}{R^2}\int_{R\leq |x|\leq 2R}
\left(\frac{u_1^m}{m}-\frac{u_2^m}{m}\right)_+\,dx \qquad 0<t<T.
\end{aligned}
\end{equation}
Since $f_1>f_2$ on $[0,\infty)$, there exists a constant $\3>0$ such that 
$f_1-f_2>\3$ on $[0,T]$. Let $0<a<b<T$. Since both $u_1$ and $u_2$ satisfy 
\eqref{eq-limit-condition-infty} uniformly on $[a,b]$ with $f=f_1$, $f_2$, 
respectively. There exist a constant $r_0>1$ such that
\begin{equation*}
\begin{cases}
\frac{u_1^m(x,t)}{m|x|}<-f_1(t)+\frac{\3}{2} \qquad \forall |x|
\geq r_0,\,\,a\leq t\leq b\\
\frac{u_2^m(x,t)}{m|x|}>-f_2(t)-\frac{\3}{2} \qquad \forall |x|
\geq R_0,\,\,a\leq t\leq b
\end{cases}
\end{equation*}
Hence
\begin{equation*}
\frac{u_1^m(x,t)}{m}-\frac{u_2^m(x,t)}{m}<(f_2(t)-f_1(t)+\3)|x|<0
\end{equation*}
for all $|x|\geq r_0$, $a\leq t\leq b$. By \eqref{eq-partial-t-1} we get
\begin{equation*}
\frac{\partial}{\partial t}\int_{\R}(u_1-u_2)_+(x,t) \vp_{R}(x)\,dx\leq 0 
\qquad\forall R\geq r_0, a\leq t\leq b.
\end{equation*}
Hence
\begin{align*}
&\int_{\R}(u_1-u_2)_+(x,b)\vp_{R}(x)\,dx 
\leq \int_{\R}(u_1-u_2)_+(x,a)\vp_{R}(x)\,dx\nonumber\\
\Rightarrow\quad
&\int_{\R}(u_1-u_2)_+(x,b)\,dx \leq \int_{\R}(u_1-u_2)_+(x,a)\,dx \qquad 
\mbox{as $R\to \infty$}
\end{align*}
 for all $0<a<b<T$. Letting $a\to 0$, 
\begin{align*}
&\int_{\R}(u_1-u_2)_+(x,b)\,dx \leq 0 \qquad \forall 0<b<T\\
\Rightarrow\quad&u_1\le u_2\quad\mbox{ in }\R\times (0,T)
\end{align*}
and the theorem follows.
\end{proof}

\begin{thm}\label{thm-u_k-u-limit}
Let $0\le u_0\in L^{\infty}(\R)$ satisfy \eqref{eq-condition-u_0}
for some constants $\mu_0>0$, $R_0>1$, $0\le f\in C([0,\infty))$, 
and $T$ be given by \eqref{eq-def-T-1}. Suppose $u$ is the solution of \eqref{eq-Cauchy-problem} 
in $\R\times(0,T)$ which satisfies \eqref{main-integral-condition-1}
and \eqref{eq-limit-condition-infty} uniformly on 
$[a,b]$ for any $0<a<b<T$. Let 
$\{f_k\}_{k+1}^{\infty}\subset C([0,\infty))$ be a sequence of 
functions such that $f_k>f_{k+1}>f\ge 0$ on $[0,T]$ for all 
$k=1,2,\cdots$, and $f_k\to f$ in $L^1([0,T])$ as $k\to\infty$. 
For each $k=1,2,\cdots$, let $u_k$ be a solution of \eqref{eq-Cauchy-problem} in 
$\R\times(0,T_k)$ which satisfies \eqref{main-integral-condition-1}, 
\eqref{eq-def-T-1}, with $f$ and $T$ being replaced by $f_k$ and 
$T_k$ and \eqref{eq-limit-condition-infty} uniformly on $[a,b]$ for 
any $0<a<b<T_k$. Then $u_k$ converges uniformly on every compact 
subset of $\R\times (0,T)$ to $u$ as $k\to\infty$. 
\end{thm}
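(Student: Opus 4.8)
The plan is to use the comparison principle of Lemma \ref{lem-L1-contraction-1} to make the family $\{u_k\}$ monotone in $k$, pass to the monotone limit, and then identify that limit with $u$ by a mass computation. First I would record that, since $f_k>f_{k+1}>f\ge0$ on $[0,T]$, the defining relation \eqref{eq-def-T-1} gives $T_k<T_{k+1}<T$, and that from $2\int_0^{T_k}f_k\,ds=\int_\R u_0\,dx=2\int_0^Tf\,ds$ one gets $\int_{T_k}^Tf\,ds=\int_0^{T_k}(f_k-f)\,ds\le\int_0^T|f_k-f|\,ds\to0$, whence (using the non-degeneracy implicit in \eqref{eq-def-T-1}) $T_k\uparrow T$. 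Applying Lemma \ref{lem-L1-contraction-1} on $\R\times(0,T_k)$ with equal initial data $u_0$ and with the flux pairs $(f_k,f_{k+1})$ and $(f_k,f)$ — its hypotheses hold since $u$ and every $u_k$ satisfy the relevant mass condition \eqref{main-integral-condition-1} and the flux condition \eqref{eq-limit-condition-infty} uniformly on compact time intervals — yields $0<u_k\le u_{k+1}\le u$ on $\R\times(0,T_k)$ for every $k\in\Z^+$. Hence $\4{u}(x,t):=\lim_{k\to\infty}u_k(x,t)$ exists for every $(x,t)\in\R\times(0,T)$, with $0\le u_k\le\4{u}\le u\le\|u_0\|_{L^\infty}$.

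Next I would upgrade this to local uniform convergence together with regularity. Given a compact set $K\subset\R\times(0,T)$, choose $k_0$ with $K\subset\R\times(0,T_{k_0})$ (possible since $T_k\uparrow T$); then for $k\ge k_0$ we have $u_k\ge u_{k_0}\ge c(K)>0$ on $K$ because $u_{k_0}$ is a positive classical solution. Thus on $K$ the sequence $\{u_k\}_{k\ge k_0}$ lies between two positive constants, the equation $u_t=(u^m/m)_{xx}$ is uniformly parabolic there, and the interior Schauder estimates \cite{LSU} give uniform $C^{2+\alpha,1+\alpha/2}$ bounds. Combined with the monotone pointwise convergence (or directly via Dini's theorem), $u_k\to\4{u}$ in $C^{2,1}_{loc}(\R\times(0,T))$, so $\4{u}$ is a positive classical solution of \eqref{main-very-fast-diffusion-1} in $\R\times(0,T)$ and the convergence $u_k\to\4{u}$ is uniform on every compact subset.

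Finally I would identify $\4{u}$ with $u$. Because $-1<m<0$, the decay hypothesis \eqref{eq-condition-u_0} makes $u_0\in L^1(\R)$, so for each fixed $t\in(0,T)$ the monotone convergence theorem and the mass identity \eqref{main-integral-condition-1} for the $u_k$ give
\[
\int_\R\4{u}(x,t)\,dx=\lim_{k\to\infty}\int_\R u_k(x,t)\,dx=\lim_{k\to\infty}\Big(\int_\R u_0\,dx-2\int_0^t f_k\,ds\Big)=\int_\R u_0\,dx-2\int_0^t f\,ds=\int_\R u(x,t)\,dx,
\]
the last equality being \eqref{main-integral-condition-1} for $u$, and the middle limit using $\int_0^t f_k\,ds\to\int_0^t f\,ds$ since $t\le T$ and $f_k\to f$ in $L^1([0,T])$. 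Since $0\le\4{u}\le u$ with $\4{u}$ and $u$ both continuous, $\int_\R(u-\4{u})(\cdot,t)\,dx=0$ forces $\4{u}(\cdot,t)\equiv u(\cdot,t)$ for every $t\in(0,T)$, and therefore $u_k\to u$ uniformly on every compact subset of $\R\times(0,T)$. I expect the only mildly delicate points to be the verification that $T_k\uparrow T$ (which rests on the non-degeneracy built into \eqref{eq-def-T-1}) and the bookkeeping that, although each $u_k$ is defined only on $\R\times(0,T_k)$, every compact subset of $\R\times(0,T)$ eventually sits inside $\R\times(0,T_k)$, which is what lets the uniform lower bound and the Schauder estimates be applied there; everything else is a routine passage to the limit.
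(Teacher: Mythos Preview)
Your argument is correct and follows essentially the same route as the paper: use Lemma~\ref{lem-L1-contraction-1} to get the monotone ordering $u_k\le u_{k+1}\le u$, extract locally uniform convergence via uniform parabolicity and Schauder estimates, and identify the limit with $u$ by matching the masses from \eqref{main-integral-condition-1}. The only cosmetic difference is that the paper phrases the compactness step via Ascoli and a subsequence (then uses monotonicity to get full-sequence convergence), whereas you invoke the monotone limit directly; you are also more explicit than the paper about why $T_k\uparrow T$.
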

\begin{proof}
By Lemma \ref{lem-L1-contraction-1},
\begin{equation}\label{eq-u_k-u-increase}
u_{k}(x,t)\leq u_{k+1}(x,t)\leq u(x,t) \qquad \forall (x,t)\in 
\R\times(0,T_k),\,\,k=1,2,\cdots,
\end{equation}
and by \eqref{main-integral-condition-1} $T_k$ increases to $T$ as 
$k\to\infty$. Hence for any $k_0\in \Z^+$ the equation 
\eqref{main-very-fast-diffusion-1} for the sequence 
$\{u_k\}_{k\geq k_0}$ is uniformly parabolic on every compact 
subset of $\R\times(0,T_{k_0})$. Hence by the standard Schauder 
estimates \cite{LSU} the sequence $\{u_k\}_{k\geq k_0}$ are 
equi-H\"older continuous on every compact subset of 
$\R\times(0,T_{k_0})$. By the Ascoli Theorem and a diagonalization
argument the sequence $\{u_k\}_{k=1}^{\infty}$ has a subsequence
$\{u_{k_i}\}_{i=1}^{\infty}$ that converge uniformly to some function 
$v$ on every compact subset of $\R\times(0,T)$ as $i\to\infty$.  
Then by \eqref{eq-u_k-u-increase} the sequence $\{u_k\}_{k=1}^{\infty}$ 
converges uniformly to $v$ on every compact subset of $\R\times(0,T)$ as 
$i\to\infty$. By \eqref{eq-u_k-u-increase},
\begin{equation}\label{eq-comparison-pointwise-1}
v(x,t)\leq u(x,t) \qquad \forall (x,t)\in\R\times(0,T).
\end{equation}
Now since $u_k$ satisfies
\begin{equation*}
\int_{\R}u_k(x,t)\,dx=\int_{\R}u_0\,dx-2\int_0^tf_k\,ds \qquad 
\forall 0\leq t<T_k, 
\end{equation*}
letting $k\to\infty$ we get
\begin{equation*}
\int_{\R}v(x,t)\,dx=\int_{\R}u_0\,dx-2\int_0^tf\,ds \qquad \forall 0\leq t<T. 
\end{equation*}
Since
\begin{equation*}
\int_{\R}u(x,t)\,dx=\int_{\R}u_0\,dx-2\int_0^tf\,ds \qquad \forall 0\leq t<T,
\end{equation*}
we have
\begin{equation}\label{eq-comparison-integral-1}
\int_{\R}u(x,t)\,dx=\int_{\R}v(x,t)\,dx \qquad 0\leq t<T.
\end{equation}
By \eqref{eq-comparison-pointwise-1} and \eqref{eq-comparison-integral-1}, 
$u=v$ on $\R\times(0,T)$ and the theorem follows.
\end{proof}

\begin{thm}\label{thm-unique}
Let $0\le u_0\in L^{\infty}(\R)$ satisfy \eqref{eq-condition-u_0}
for some constants $\mu_0>0$, $R_0>1$, $0\le f\in C([0,\infty))$, 
and $T$ be given by \eqref{eq-def-T-1}. Suppose $u$ is the solution of 
\eqref{eq-Cauchy-problem} 
in $\R\times(0,T)$ which satisfies \eqref{main-integral-condition-1}
and \eqref{eq-limit-condition-infty} uniformly on 
$[a,b]$ for any $0<a<b<T$ and $\4{u}$ is the solution
of \eqref{eq-Cauchy-problem} in $\R\times(0,T)$ constructed in 
\cite{Hu3} by Neumann 
approximation which also satisfies \eqref{main-integral-condition-1} and 
\eqref{eq-limit-condition-infty} uniformly on 
$[a,b]$ for any $0<a<b<T$. Then $u=\4{u}$ in $\R\times (0,T)$ 
\end{thm}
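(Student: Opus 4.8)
\emph{Proof proposal.} The plan is to exhibit both $u$ and $\4u$ as the uniform-on-compacta limit of one and the same approximating sequence, and then invoke uniqueness of limits. Fix a sequence $\{f_k\}_{k=1}^{\infty}\subset C([0,\infty))$ with $f_k>f_{k+1}>f\ge 0$ on $[0,T]$ and $f_k\to f$ in $L^1([0,T])$ as $k\to\infty$; for instance $f_k=f+1/k$ works. For each $k$ let $T_k$ be given by \eqref{eq-def-T-1} with $f$ replaced by $f_k$, and let $u_k$ be the solution of \eqref{eq-Cauchy-problem} in $\R\times(0,T_k)$ furnished by Lemma \ref{lem-general-f-exist-1} (via the step-function construction of Lemma \ref{lemma-step-initial-2}), so that $u_k$ satisfies \eqref{main-integral-condition-1} with $f$ replaced by $f_k$ and satisfies \eqref{eq-limit-condition-infty} uniformly on $[a,b]$ for every $0<a<b<T_k$. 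By \eqref{main-integral-condition-1} the times $T_k$ increase to $T$ as $k\to\infty$.

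By hypothesis $u$ is a solution of \eqref{eq-Cauchy-problem} on $\R\times(0,T)$ satisfying \eqref{main-integral-condition-1} and \eqref{eq-limit-condition-infty} uniformly on $[a,b]$ for every $0<a<b<T$, which is exactly the hypothesis placed on the limit function in Theorem \ref{thm-u_k-u-limit}. Applying that theorem therefore gives $u_k\to u$ uniformly on every compact subset of $\R\times(0,T)$ as $k\to\infty$.

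The key observation is now that $\4u$, the solution produced in \cite{Hu3} by the Neumann approximation, is also assumed to solve \eqref{eq-Cauchy-problem} on the same interval $\R\times(0,T)$ and to satisfy the same two conditions \eqref{main-integral-condition-1} and \eqref{eq-limit-condition-infty} uniformly on compact subintervals. Hence Theorem \ref{thm-u_k-u-limit} applies once more, verbatim, with $u$ replaced by $\4u$ and with the same sequences $\{f_k\}$ and $\{u_k\}$, yielding $u_k\to\4u$ uniformly on every compact subset of $\R\times(0,T)$ as $k\to\infty$. Since a sequence of continuous functions has at most one limit in the topology of uniform convergence on compact subsets of $\R\times(0,T)$, we conclude $u=\4u$ on $\R\times(0,T)$, which is the assertion.

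The only point needing care is that the approximants $\{u_k\}$ supplied by Lemma \ref{lem-general-f-exist-1} really do meet the exact list of requirements imposed on the approximating sequence in Theorem \ref{thm-u_k-u-limit} --- namely \eqref{main-integral-condition-1}, \eqref{eq-def-T-1} (with $f_k$, $T_k$), and \eqref{eq-limit-condition-infty} --- which is ensured by the constructions in Lemmas \ref{lemma-step-initial-2} and \ref{lem-general-f-exist-1} together with the comparison Lemma \ref{lem-L1-contraction-1}, so no genuine obstacle remains. Once Theorem \ref{thm-u_k-u-limit} is in hand the argument is a short ``approximation plus uniqueness of limit'' bookkeeping, the real work having already been done in establishing Theorem \ref{thm-u_k-u-limit}, which is itself a disguised uniqueness statement for solutions of \eqref{eq-Cauchy-problem} satisfying \eqref{main-integral-condition-1} and \eqref{eq-limit-condition-infty}.
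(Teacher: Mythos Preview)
Your proof is correct and follows essentially the same approach as the paper: both pick a strictly decreasing sequence $f_k\downarrow f$ in $L^1$, produce for each $k$ a solution $u_k$ with flux $f_k$ satisfying \eqref{main-integral-condition-1} and \eqref{eq-limit-condition-infty}, and then invoke Theorem~\ref{thm-u_k-u-limit} twice to conclude $u=\lim_k u_k=\4u$. Your write-up is in fact more explicit than the paper's, since you specify a concrete source (Lemma~\ref{lem-general-f-exist-1}) for the approximants $u_k$ and spell out the ``uniqueness of limits'' step that the paper compresses into a single displayed equation.
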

\begin{proof}
We choose a sequence of functions $\{f_k\}_{k+1}^{\infty}\subset 
C([0,\infty))$ such that $f_k>f_{k+1}>f\ge 0$ on $[0,T]$ for all 
$k=1,2,\cdots$, and $f_k\to f$ in $L^1([0,T])$ as $k\to\infty$. 
For each $k=1,2,\cdots$, let $u_k$ be a solution of \eqref{eq-Cauchy-problem} 
in $\R\times(0,T_k)$ which satisfies \eqref{main-integral-condition-1}, 
\eqref{eq-def-T-1}, with $f$ and $T$ being replaced by $f_k$ and 
$T_k$ and \eqref{eq-limit-condition-infty} uniformly on $[a,b]$ for 
any $0<a<b<T_k$. Then by Theorem \ref{thm-u_k-u-limit}
$$
u=\4{u}=\lim_{k\to\infty}u_k
$$
and the theorem follows.
\end{proof}

We are now ready for the proof of Theorem 3.1.
\vspace{6pt}

\noindent{\ni{\it Proof of Theorem 3.1}:}  
Let $\{R_k\}_{k=1}^{\infty}$ be a sequence of positive numbers such that
$R_k\to\infty$ as $k\to\infty$. By Theorem 3.2 the sequence 
$\{R_k\}_{k=1}^{\infty}$ has a subsequence which we may assume without 
loss of generality to be the sequence itself such that $u^{R_k}$ converges
uniformly on every compact subset of $\R\times (0,T)$ as $k\to\infty$ to a
solution $u$ of \eqref{eq-Cauchy-problem} which satisfies 
\eqref{eq-thm-condition-1} where $T$ is given by \eqref{eq-mu-time}. 
By Lemma \ref{lemma-step-initial-3} $u$ satisfies 
\eqref{eq-condition-neumann-2} uniformly on $[a,b]$ for any $0<a<b<T$. 
By Theorem 4.8 $u$ is independent of the choice of sequence 
$\{R_k\}_{k=1}^{\infty}$. Hence $u^R$ converges
uniformly on every compact subset of $\R\times (0,T)$ to $u$ as 
$R\to\infty$ and the theorem follows.
 
\hfill$\square$\vspace{6pt}

\begin{thm}\label{thm-convergence-Dirichlet-problem-2}
Let $0\le u_0\in L^{\infty}(\R)$ satisfy \eqref{eq-condition-u_0}
for some constants $\mu_0>0$, $R_0>1$, $f,\,g \in C([0,\infty))$ be such that
$f(t),\, g(t)\ge\mu_0$ on $[0,\infty)$, 
and $T$ be given by \eqref{eq-def-T-general}.
Let $v^R$ be the solution
of \eqref{problem-general-Dirichlet}.
Then $v^R$ converges uniformly on every compact subset of $\R\times 
(0,T)$ to a solution $u$ of \eqref{eq-Cauchy-problem} which satisfies 
\eqref{main-integral-condition-general}, \eqref{eq-limit-condition-infty+general} and \eqref{eq-limit-condition-infty-general} uniformly on $[a,b]$ for any $0<a<b<T$ as $R\to\infty$. Moreover, the solution is the same as the solution of \eqref{eq-Cauchy-problem} in $\R\times(0,T)$ constructed in \cite{Hu3} by Neumann approximation method.
\end{thm}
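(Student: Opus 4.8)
The plan is to run the scheme of Sections~3 and~4 once more, now sandwiching $v^R$ between two solutions produced by the Neumann approximation of \cite{Hu3} in place of the self-similar barriers. For $R\ge1$ and $\3\in(0,1)$ let $v_{\3}^R$ be the solution of \eqref{problem-general-Dirichlet} with initial data $u_0+\3$ (cf.\ \cite{ERV}, \cite{Hu1}), so $v^R=\lim_{\3\to0}v_{\3}^R$ and, as in Section~3, $v_{\3}^R$ satisfies the Aronson--Benilan inequality $u_t\le u/((1-m)t)$ in $(-R,R)\times(0,\infty)$. Fix $\sigma\in(0,\mu_0)$ and let $\4{u}_\sigma^{-}$, $\4{u}_\sigma^{+}$ be the solutions of \eqref{eq-Cauchy-problem} in $\R\times(0,\underline{T}_\sigma)$, resp.\ $\R\times(0,\overline{T}_\sigma)$, constructed in \cite{Hu3} by Neumann approximation with boundary data $(f+\sigma,g+\sigma)$, resp.\ $(f-\sigma,g-\sigma)$; by \cite{Hu3} they satisfy \eqref{main-integral-condition-general} and the asymptotic conditions \eqref{eq-limit-condition-infty+general}, \eqref{eq-limit-condition-infty-general} with $(f,g)$ replaced by $(f+\sigma,g+\sigma)$, resp.\ $(f-\sigma,g-\sigma)$, and $\underline{T}_\sigma<T<\overline{T}_\sigma$ by \eqref{eq-def-T-general}. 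First I would record uniform bounds on $v^R$. Since $f,g\ge\mu_0$, the stationary supersolution $\phi(x)=(\mu_0|m|(|x|-R_0))^{1/m}$ dominates the lateral data of $v_{\3}^R$ on $R_0+\delta_1\le|x|\le R$ for any small $\delta_1>0$, so the measure-of-excess argument of Section~3 gives $v^R(x,t)\le(\mu_0|m|(|x|-R_0))^{1/m}$ for $R_0\le|x|\le R$, while $v^R\le\|u_0\|_{L^\infty}$ for $R$ large. For a lower bound, put $M=\sup_{[0,T]}\max(f,g)$ and let $T_M$ be given by \eqref{eq-mu-time} with $\mu=M$ (so $T_M\le T$); since $f,g\le M$, the symmetric Dirichlet solution $u_{\3}^{R,M}$ has smaller lateral data than $v_{\3}^R$, hence $v_{\3}^R\ge u_{\3}^{R,M}$, and letting $\3\to0$ gives $v^R\ge u^{R,M}\to u_M>0$ on compact subsets of $\R\times(0,T_M)$ by Theorem~\ref{thm-convergence-Dirichlet-problem}. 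An Aronson--Benilan/Harnack argument as in the proof of Theorem~\ref{thm-sequential-convergence-problem} (iterating the estimate if necessary) then propagates this to a positive lower bound on every compact subset of $\R\times(0,T)$; by the Schauder estimates \cite{LSU}, $\{v^R\}$ is equi-H\"older on every compact subset of $\R\times(0,T)$, so any sequence $R_k\to\infty$ has a subsequence along which $v^{R_k}$ converges uniformly on compact sets to a positive solution $u$ of \eqref{main-very-fast-diffusion-1} in $\R\times(0,T)$, with $u(\cdot,t)\to u_0$ in $L^1(\R)$ as $t\to0$ as in Section~3, so $u$ solves \eqref{eq-Cauchy-problem}.

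Next I would pin down the mass and the boundary asymptotics of $u$ by squeezing. Using the asymptotics of $\4{u}_\sigma^{\pm}$ together with $f,g\ge\mu_0$, for every $0<t'<t<\underline{T}_\sigma$ and all $R$ large enough one has $\4{u}_\sigma^{-}\le v_{\3}^R\le\4{u}_\sigma^{+}$ on $\{\pm R\}\times[t',t]$; applying the $L^1$-contraction estimate (Lemma~2.3 of \cite{DK}, Lemma~2.5 of \cite{Hu3}) on $[t',t]$ as in Section~4 and then letting $\3\to0$, $R\to\infty$ along the convergent subsequence, and $t'\to0$ yields
\begin{equation*}
\4{u}_\sigma^{-}(x,t)\le u(x,t)\le\4{u}_\sigma^{+}(x,t)\qquad\forall\,x\in\R,\ 0<t<\underline{T}_\sigma .
\end{equation*}
Integrating in $x$ and letting $\sigma\to0$ (for fixed $t<T$ one has $\underline{T}_\sigma>t$ once $\sigma$ is small) gives \eqref{main-integral-condition-general} on $(0,T)$; comparing $u^m/(mx)$ with $(\4{u}_\sigma^{\pm})^m/(mx)$ separately for $x>0$ and for $x<0$ and letting $\sigma\to0$ gives \eqref{eq-limit-condition-infty+general} and \eqref{eq-limit-condition-infty-general} uniformly on $[a,b]$ for any $0<a<b<T$.

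Finally I would upgrade subsequential convergence to convergence as $R\to\infty$ and identify the limit. The key is the asymmetric analogue of Lemma~\ref{lem-L1-contraction-1}: if $v_1,v_2$ solve \eqref{eq-Cauchy-problem} with data $(f_1,g_1)$, $(f_2,g_2)$, with $0\le u_{0,1}\le u_{0,2}$, $f_1>f_2$ and $g_1>g_2$ on $[0,T]$, and each satisfies its mass identity \eqref{main-integral-condition-general} and its two asymptotic conditions, then $v_1\le v_2$; the proof is verbatim that of Lemma~\ref{lem-L1-contraction-1}, the only new point being that the asymptotics at $x\to-\infty$ make $(u_1^m/m-u_2^m/m)_+$ vanish near $-\infty$ as well as near $+\infty$, so both Kato boundary terms drop out. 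Combining this with the monotone approximation of $(f,g)$ from above by $(f+1/k,g+1/k)$ gives, exactly as in Theorems~\ref{thm-u_k-u-limit} and~\ref{thm-unique}, that the solution of \eqref{eq-Cauchy-problem} satisfying \eqref{main-integral-condition-general} and \eqref{eq-limit-condition-infty+general}--\eqref{eq-limit-condition-infty-general} for the given $(f,g)$ is unique. Hence the limit $u$ constructed above is independent of $\{R_k\}$, so $v^R\to u$ uniformly on every compact subset of $\R\times(0,T)$ as $R\to\infty$; and since the Neumann solution $\4{u}_{f,g}$ of \cite{Hu3} satisfies the same conditions, $u=\4{u}_{f,g}$.

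The step I expect to be the main obstacle is the sandwich: the lateral comparison between $\4{u}_\sigma^{\pm}$ and $v_{\3}^R$ on $\{\pm R\}$ is available only on $t$-intervals compactly contained in $(0,T)$, so $R\to\infty$ must be carried out before $t'\to0$ --- which is why the convergence of $v^R$ has to be secured first, by the barrier-free Harnack argument above, rather than obtained from the sandwich. The rest is a routine transcription of Sections~3--4, using the asymmetric Neumann solutions and their properties from \cite{Hu3}.
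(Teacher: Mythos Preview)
Your sandwich-and-uniqueness scheme (squeezing $u$ between the Neumann solutions $\4{u}_\sigma^{-}$ and $\4{u}_\sigma^{+}$ with continuous data $(f\pm\sigma,g\pm\sigma)$, then invoking an asymmetric version of Lemma~\ref{lem-L1-contraction-1}) is correct and is in fact a tidier variant of the paper's own argument, which uses step-function approximants $f_j,g_j$ in place of your $f\pm\sigma$. Once you have a limit $u$ of $v^{R_k}$ on the full strip $\R\times(0,T)$, everything from your second paragraph on goes through.

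The gap is in getting compactness on all of $\R\times(0,T)$. Your lower barrier $u^{R,M}$, with $M=\sup_{[0,T]}\max(f,g)$, converges to a solution that extinguishes at $T_M=\frac{1}{2M}\int_{\R}u_0\,dx\le T$, so it gives a uniform positive lower bound for $v^R$ only on compact subsets of $\R\times(0,T_M)$. Your claim that ``an Aronson--Benilan/Harnack argument as in the proof of Theorem~\ref{thm-sequential-convergence-problem} (iterating the estimate if necessary)'' propagates this to $\R\times(0,T)$ does not hold: both the Aronson--Benilan inequality $u_t\le u/((1-m)t)$ and the Harnack estimate (3.16)--(3.17) used there transfer information \emph{backward} in time (a lower bound at a late reference point $T-\tfrac52\delta$ yields lower bounds on $[s_0,T-3\delta]$), never forward. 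In Theorem~\ref{thm-sequential-convergence-problem} the late-time seed (3.15) is produced by the averaged-mass lower bound (3.14), which in turn comes from the Green-function comparison with the self-similar barrier; you have no analogue of (3.14) past $t=T_M$, so there is no late-time seed to iterate from.

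The paper closes this gap by a time-extension bootstrap: one first obtains convergence and the mass identity \eqref{main-integral-condition-general} on $\R\times(0,T_0)$ with $T_0=T_M$, then observes that $\int_{\R}v(x,T_0')\,dx>0$ for $T_0'$ close to $T_0$, restarts the whole argument at time $T_0'$ with initial value $v(\cdot,T_0')$, and shows the extension step has length at least a fixed $C_1>0$ independent of $T_0'$; iterating reaches $T$. Your proof can be repaired by inserting exactly this step between your first and second paragraphs --- note that it genuinely needs the mass identity on the already-constructed piece, so your sandwich must be run once on $[0,T_M)$ before you can extend, and then again on each new piece.
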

\begin{proof}
Let $\{R_k\}_{k=1}^{\infty}$ be a sequence of positive numbers such that $R_k\to \infty$ as $k\to\infty$ and let $v_{\epsilon}^{R_k}$ be the solution of \eqref{problem-general-Dirichlet} with initial data $v_{\epsilon}^{R_k}(x,0)=u_0(x)+\epsilon$. Let $\mu=\max\left(\|f\|_{L^{\infty}(0,T)}, \|g\|_{L^{\infty}(0,T)}\right)$ and let $u_{\epsilon}^{R_k,\mu}$ be the solution of \eqref{problem-Dirichlet} with initial data $u_{\epsilon}^{R_k,\mu}(x,0)=u_0(x)+\epsilon$. Then, by maximum principle, we have
\begin{equation*}
u^{R_k,\mu}_{\epsilon}\leq v^{R_k}_{\epsilon}
\end{equation*}
\begin{equation*}
\Rightarrow u^{R_k,\mu}\leq v^{R_k} \qquad \mbox{in $\R\times(0,\infty)$}, \qquad \mbox{as $\epsilon \to 0$}.
\end{equation*}
Let 
\begin{equation}\label{eq-maximum-extinction-time-T-1}
T_0=\frac{1}{2\mu}\int_{\R}u_0\,dx.
\end{equation}
By Theorem \ref{thm-convergence-Dirichlet-problem}, $u^{R_k,\mu}$ converges uniformly on any compact subsets of $\R\times(0,T_0)$ as $\R_k\to\infty$ to a solution $\tilde{u}$ of \eqref{eq-Cauchy-problem} which satisfies 
\eqref{eq-thm-condition-1} and \eqref{eq-condition-neumann-2} 
uniformly on $[a,b]$ for any $0<a<b<T_0$ in $\R\times(0,T_0)$. Let $K_1$ be a compact subset of $\R\times(0,T_0)$. Then there exist a constant $c_0=c_0(K_1)>0$ such that  
\begin{equation*}
\tilde{u}\geq c_0>0 \qquad \mbox{on $K_1$}.
\end{equation*}
Hence there exists a constants $k_0\in \Z^+$ and $C(K_1)>0$ such that 
\begin{equation*}
v^{R_k}\geq u^{R_k, \mu}\geq C(K_1)>0 \qquad k\geq k_0>>1.
\end{equation*}
Thus the sequence $\{v^{R_k}\}_{k=1}^{\infty}$ is uniformly bounded below by some positive constant on any compact subset of $\R\times(0,T_0)$ for all $k$ sufficiently large. Since the sequence $\{v^{R_k}\}_{k=1}^{\infty}$ is uniformly bounded from above by $\|u_0\|_{L^{\infty}}$, the equation \eqref{main-very-fast-diffusion-1} for the sequence $\{v^{R_k}\}_{k=1}^{\infty}$ is uniformly parabolic on every compact subset $\R\times(0,T_0)$. Hence by the Schauder estimates for parabolic equations 
\cite{LSU}, the sequence $\{v^{R_k}\}_{k=1}^{\infty}$ is equi-H\"older continuous on 
every compact subsets of $\R\times(0,T_0)$. Hence any sequence
$\{v^{R_k}\}_{k=1}^{\infty}$ with $R_k\to\infty$ as $k\to\infty$
has a subsequence which we may assume without loss of generality 
to be the sequence itself that converges uniformly on 
every compact subset of $\R\times (0,T_0)$ to a solution $v$ of \eqref{main-very-fast-diffusion-1} in $\R\times(0,T_0)$ as 
$k\to\infty$. Since by \eqref{eq-condition-u_0}, $v^{R_k}$ satisfies \eqref{eq-u^R-bound} for $R_k\geq R_0$. By an argument similar to the proof of Theorem \ref{thm-sequential-convergence-problem}, $v$ has initial value $u_0$. Hence $v$ is a solution of \eqref{eq-Cauchy-problem} in $\R\times(0,T_0)$.

\indent It remains to show that $v$ satisfies \eqref{main-integral-condition-general}. For any $j=1,2,\cdots,$ let 
$f_j=\sum_{i=1}^{2^j}\mu_i\chi_{I_i}$, $g_j=\sum_{i=1}^{2^j}\nu_i\chi_{I_i}$ where $\mu_i=\sup_{I_i}f+\frac{1}{j}$, $\nu_i=\sup_{I_i}g+\frac{1}{j}$,
$I_i=[a_{i-1},a_i)$, $a_0=0$, $a_i=iT/2^j$, for all $i=1,2,\cdots,2^j$.\\
\indent We now consider the solution $v_j^{R_k}(x,t)$ of following Neumann problem
\begin{equation*}
\begin{cases}
v_t=\left(\frac{v^m}{m}\right)_{xx} \qquad \qquad I_{R_k}\times(0,\infty)\\
\left(\frac{v^m}{m}\right)_x(R_k,t)=-f_j \qquad \forall 0<t<T_{j,k}\\
\left(\frac{v^m}{m}\right)_x(-R_k,t)=g_j \qquad \forall 0<t<T_{j,k}\\
v(x,0)=u_0(x) \qquad \qquad \mbox{in $I_{R_k}$}
\end{cases}
\end{equation*}
which satisfies 
\begin{equation*}
\frac{v_t}{v}\leq \frac{1}{(1-m)t} \qquad \mbox{in $I_R\times(0,T_{j,k})$}
\end{equation*}
and 
\begin{equation*}
\int_{-R_k}^{R_k}v(x,t)\,dx=\int_{-R_k}^{R_k}u_0\,dx-\int_0^t(f_j+g_j)\,ds \qquad \forall 0\leq t<T_{j,k}
\end{equation*}
where $T_{j,k}$ is given by
\begin{equation*}
\int_{-R_k}^{R_k}u_0\,dx=\int_0^{T_{j,k}}(f_j+g_j)\,ds.
\end{equation*}
Then, by Lemma 4.2 of \cite{Hu3}, the solution $v_j^{R_k}(x,t)$ has a subsequence which we may assume without loss of generality to be the sequence itself that converges to the solution $v_j(x,t)$ of \eqref{eq-Cauchy-problem} uniformly on every compact subset of $\R\times(0,T_0)$ as $k\to\infty$ with
\begin{equation*}
\int_{\R}v_j(x,t)\,dx=\int_{\R}u_0\,dx-\int_0^t(f_j+g_j)\,ds \qquad \forall 0\leq t<T_{j}
\end{equation*}
where
\begin{equation*}
\int_{\R}u_0\,dx=\int_0^{T_{j}}(f_j+g_j)\,ds.
\end{equation*}
Let $i_j\in\{1,2,\cdots,2^j\}$ such that $a_{i_j-1}<T_{j}\leq a_{i_j}$. Then, by \cite{Hu3}, the solution $v_j$ also satisfies, for all $\epsilon>0$, 
\begin{equation*}
\frac{v_j^m(x,t)}{mx} \to -\mu_i \quad \mbox{uniformly on $[a_{i-1}+\epsilon, a_i-\epsilon]$}\quad \mbox{as $x\to \infty$},\quad \forall i=1,\cdots,i_{j}-1
\end{equation*}
and 
\begin{equation*}
\frac{v_j^m(x,t)}{mx} \to \nu_i \quad \mbox{uniformly on $[a_{i-1}+\epsilon, a_i-\epsilon]$}\quad \mbox{as $x\to -\infty$},\quad \forall i=1,\cdots,i_{j}-1.
\end{equation*}
Hence, for sufficiently large $R_k>>1$,
\begin{equation*}
\begin{cases}
\frac{v_j^m(R_k,t)}{mR_k}<-\sup_{I_i}f-\frac{1}{2j} \qquad \forall t\in[a_{i-1}+\epsilon, a_i-\epsilon], \quad \forall i=1,\cdots,i_{j}-1\\
\frac{v_j^m(-R_k,t)}{m(-R_k)}>\sup_{I_i}g+\frac{1}{2j} \qquad \forall t\in[a_{i-1}+\epsilon, a_i-\epsilon], \quad \forall i=1,\cdots,i_{j}-1.
\end{cases}
\end{equation*}
and
\begin{equation}\label{eq-case-pm-R-k-lim-1}
\begin{cases}
v_j(R_k,t)<\left(|m|R_k f(t)\right)^{\frac{1}{m}}=v^{R_k}(R_k,t)\\ 
v_j(-R_k,t)<\left(|m|R_k g(t)\right)^{\frac{1}{m}}=v^{R_k}(-R_k,t)
\end{cases}
\end{equation}
for any $t\in[a_{i-1}+\epsilon, a_i-\epsilon]$, $\forall i=1,\cdots,i_{j}-1$. Hence by \eqref{eq-case-pm-R-k-lim-1} and an argument similar to the proof of Lemma 2.3 of \cite{DK} and Lemma 2.5 of \cite{Hu3}, for sufficiently large $R_k>>1$,
\begin{equation}\label{eq-comparison-for-lowerbound-1}
\int_{|x|\leq R_k}(v_j-v^{R_k})_+(x,t_2)\,dx \leq \int_{|x|\leq R_k}(v_j-v^{R_k})_+(x,t_1)\,dx
\end{equation} 
for $ a_{i-1}+\epsilon\leq t_1 \leq t_2 \leq a_{i}-\epsilon, \,\, i=1,2,\cdots,i_j-1$. \\
Letting $k\to\infty$ in \eqref{eq-comparison-for-lowerbound-1}, by \eqref{eq-u^R-bound} and Lebesque Dominated Convergence Theorem,
\begin{equation*}
\int_{\R}(v_j-v)_+(x,t_2)\,dx \leq \int_{\R}(v_j-v)_+(x,t_1)\,dx 
\end{equation*} 
for any $a_{i-1}+\epsilon \leq t_1 \leq t_2 \leq a_i-\epsilon$ and $t_2< T_0$. For $i=1,\cdots,i_j-1$, letting $\epsilon \to 0$,  $t_1\to a_{i-1}$ and taking $t_2$ arbitrary,
\begin{equation}\label{eq-comparison-DK-1}
\int_{\R}(v_j-v)_+(x,t)\,dx \leq \int_{\R}(v_j-v)_+(x,a_{i-1})\,dx \qquad \forall a_{i-1}\leq t\leq a_i,\,\,i=1,\cdots, i_{j}-1.
\end{equation}
Similarly
\begin{equation}\label{eq-comparison-DK-2}
\int_{\R}(v_j-v)_+(x,t)\,dx \leq \int_{\R}(v_j-v)_+(x,a_{i-1})\,dx \qquad \forall a_{i_j-1}\leq t\leq T_{j}.
\end{equation}
Hence by \eqref{eq-comparison-DK-1} and \eqref{eq-comparison-DK-2}, 
\begin{equation*}
\int_{\R}(v_j-v)_+(x,t)\,dx \leq \int_{\R}(v_j-v)_+(x,0)\,dx=0
\end{equation*}
for $0<t<T_{j}$. $\forall j\in \Z^+$. Then
\begin{equation*}
v_j\leq v \qquad \mbox{in $\R\times(0,T_j')$} \quad \forall j=1,2,\cdots,
\end{equation*}
where $T_j'=\min\left(T_0,T_{j}\right)$. Therefore
\begin{equation*}
\int_{\R}v(x,t)\,dx \geq \int_{\R}v_j(x,t)\,dx=\int_{\R}u_0(x)\,dx-\int_{0}^{t}(f_j+g_j)\,ds \qquad \forall 0\leq t <T_j'.
\end{equation*}
Letting $j\to\infty$, we have
\begin{equation*}
\int_{\R}v(x,t)\,dx \geq \int_{\R}u_0(x)\,dx-\int_{0}^{t}(f+g)\,ds \qquad \forall 0\leq t< T_0.
\end{equation*}
Similarly, one can prove that
\begin{equation*}
\int_{\R}v(x,t)\,dx \leq \int_{\R}u_0(x)\,dx-\int_{0}^{t}(f+g)\,ds \qquad \forall 0\leq t< T_0.
\end{equation*}
Hence $v$ satisfies \eqref{main-integral-condition-general} for any $t\in[0,T_0]$.\\
\indent Let $\tilde{T}_0\geq T_0$ be the maximal time such that $\{v^{R_k}\}_{k=1}^{\infty}$ has a subsequence which we still denote by $\{v^{R_k}\}_{k=1}^{\infty}$ that converges to a solution $v$ of \eqref{eq-Cauchy-problem} in $\R\times(0,\tilde{T}_0)$ which satisfies \eqref{main-integral-condition-general} for $0\leq t<\tilde{T}_0$ as $k\to\infty$. We claim that $\tilde{T}_0=T$. Suppose not. Then $\tilde{T}_0<T$. Hence by \eqref{main-integral-condition-general}, 
\begin{equation}\label{eq-expand-extinction-time-1}
\int_{\R}v(x,\tilde{T}_0)\,dx=\int_{\R}u_0\,dx-\int_0^{\tilde{T}_0}(f+g)\,ds>0.
\end{equation}
We will now choose a constant $T_0'<\tilde{T}_0$ sufficiently close to $\tilde{T}_0$. Let $u^{R,\mu}_1$ be the solution of \eqref{problem-Dirichlet} with initial value $v(x,T_0')$. By Theorem \ref{thm-convergence-Dirichlet-problem}, $u_1^{R,\mu}$ converges uniformly on any compact subsets of $\R\times(0,\tilde{T})$ as $k\to\infty$ to the solution $\tilde{u}$ of \eqref{eq-Cauchy-problem} with $u_0(x)=v(x,T_0')$ where
\begin{equation*}
\tilde{T}=\frac{1}{2\mu}\int_{\R}v(x,T_0')\,dx.
\end{equation*}
Then by repeating the previous argument using $u_1^{R_k,\mu}$ as the comparison function, we get that $v^{R_k}(x,t+T_0')$ has a subsequence which we still denote by $v^{R_k}$ such that $v^{R_k}(x,t+T_0')$ converges to a solution $\tilde{v}$ of \eqref{eq-Cauchy-problem} in $\R\times(0,\tilde{T})$ with $u_0(x)=v(x,T_0')$ where
\begin{equation*}
\begin{aligned}
\tilde{T}=\frac{1}{2\mu}\int_{\R}v(x,T_0')\,dx&=\frac{1}{2\mu}\left(\int_{\R}u_0\,dx-\int_0^{T_0'}(f+g)\,ds\right)\\ 
&\geq \frac{1}{2\mu}\left(\int_{\R}u_0\,dx-\int_0^{\tilde{T}_0}(f+g)\,ds\right):=C_1>0.
\end{aligned}
\end{equation*}
We extend $v$ to a solution of \eqref{eq-Cauchy-problem} in $\R\times(0,T_0'+\tilde{T})$ by setting $v(x,t)=\tilde{v}(x,t-T_0')$ for $T_0'\leq t <T_0'+\tilde{T}$. We now choose $T_0'>0$ such that $T_0-\frac{C_1}{2}<T_0'<\tilde{T}_0$. Then
\begin{equation*}
T_0'+\tilde{T}>T_0.
\end{equation*}
This contradicts the maximality of $\tilde{T}_0$. Therefore
\begin{equation*}
\tilde{T}_0=T.
\end{equation*}
Hence $\{v^{R_k}\}_{k=1}^{\infty}$ has a subsequence which we still denote by $\{v^{R_k}\}_{k=1}^{\infty}$ such that $v^{R_k}$ converges to a solution $v$ of \eqref{eq-Cauchy-problem} in $\R\times(0,T)$ which satisfies \eqref{main-integral-condition-general} for $t\in (0,T)$ as $k\to\infty$.\\
\indent By an argument similar to the proof of Corollary \ref{cor-step-initial-4}, Lemma \ref{lem-general-f-exist-1} and the proof of Theorem 1.11 of \cite{Hu2}, $u$ satisfies \eqref{eq-limit-condition-infty+general} and \eqref{eq-limit-condition-infty-general} for any $0<a<b<T$. Then by \eqref{eq-limit-condition-infty+general}, \eqref{eq-limit-condition-infty-general} and the same argument as the proof of Theorem \ref{thm-unique}, $u$ is equal to the solution $\tilde{u}$ of \eqref{eq-Cauchy-problem} in $\R\times(0,T)$ constructed in \cite{Hu3} by Neumann approximation method.\\
\indent Since the sequence $\{v^{R_k}\}_{k=1}^{\infty}$ is arbitrary and the limit of the sequence $u=\tilde{u}$ is unique and independent of the sequence $\{R_k\}_{k=1}^{\infty}$, $v^{R}$ converges uniformly to $u$ every compact subset of $\R\times(0,T)$ as $R\to\infty$ and the theorem follows.
\end{proof}

\end{document}